\documentclass[10pt]{amsart}

 \usepackage{amsmath,amsthm}
\usepackage{amssymb,latexsym}
\usepackage[all]{xy}

\makeatletter
\@addtoreset{equation}{section}
\makeatother

\newtheorem{theorem}{Theorem}[section]
\newtheorem{proposition}[theorem]{Proposition}
\newtheorem{lemma}[theorem]{Lemma}
\newtheorem{corollary}[theorem]{Corollary}

\theoremstyle{definition}

\newtheorem{remark}[theorem]{Remark}

%%% A numbered theorem with a fancy name: %%%
%\newtheorem{ruan}[theorem]{Ruan's Theorem}

%%%MACROS

\newcommand{\til}[1]{\tilde{#1}}

\newcommand{\id}{\operatorname{id}}
\newcommand{\ran}{\operatorname{ran}}
\newcommand{\spn}{\operatorname{span}}
\newcommand{\supp}{\operatorname{supp}}

\newcommand{\norm}[1]{\left\Vert#1\right\Vert}
\newcommand{\pcbnorm}[1]{\left\Vert#1\right\Vert_{p\mathrm{cb}}}
\newcommand{\con}{\!\ast\!}

\newcommand{\fA}{\mathcal{A}}
\newcommand{\fB}{\mathcal{B}}
\newcommand{\fC}{\mathcal{C}}

\newcommand{\fI}{\mathcal{I}}
\newcommand{\fJ}{\mathcal{J}}
\newcommand{\fM}{\mathcal{M}}
\newcommand{\fN}{\mathcal{N}}

\newcommand{\fV}{\mathcal{V}}
\newcommand{\fW}{\mathcal{W}}
\newcommand{\fX}{\mathcal{X}}

\newcommand{\Cee}{\mathbb{C}}
\newcommand{\Ree}{\mathbb{R}}
\newcommand{\En}{\mathbb{N}}

\newcommand{\alp}{\alpha}
\newcommand{\del}{\delta}
\newcommand{\Del}{\Delta}
\newcommand{\eps}{\varepsilon}
\newcommand{\gam}{\gamma}
\newcommand{\Gam}{\Gamma}
\newcommand{\lam}{\lambda}
\newcommand{\ome}{\omega}

\newcommand{\vphi}{\varphi}

\newcommand{\bl}{\mathrm{L}}
\newcommand{\ap}{\mathrm{A}_p}
\newcommand{\apg}{\mathrm{A}_p(G)}
\newcommand{\cvpg}{\mathrm{CV}_p(G)}
\newcommand{\cvp}{\mathrm{CV}_p}
\newcommand{\mat}{\mathrm{M}}
\newcommand{\kmat}{\mathrm{K}}
\newcommand{\nmat}{\mathrm{N}}
\newcommand{\pptens}{\hat{\otimes}^p}

\newcommand{\pmpg}{\mathrm{PM}_p(G)}
\newcommand{\pmp}{\mathrm{PM}_p}
\newcommand{\sop}{\mathrm{S}_0^p}
\newcommand{\sopg}{\mathrm{S}_0^p(G)}
\newcommand{\seg}{\mathrm{S}}

\begin{document}

\title[$p$-Feichtinger Segal algebras]
{$p$-Operator space structure on
Feichtinger--Fig\`{a}-Talamanca--Herz Segal algebras}

\author{Serap \"{O}ztop and Nico Spronk}

\begin{abstract}
We consider the minimal boundedly-translation-invariant Segal algebra $\sopg$
in the  Fig\`{a}-Talamanca--Herz algebra $\apg$ of a locally compact group $G$.
In the case that $p=2$ and $G$ is abelian this is the classical Segal algebra of Feichtinger.
Hence we call this the Feichtinger--Fig\`{a}-Talamanca--Herz Segal algebra of $G$.
Remarkably, this space is also a Segal algebra in $\bl^1(G)$ and is, in fact, the minimal
such algebra which is closed under pointwise multiplication by $\apg$.  Even for
$p=2$, this result is new for non-abelian $G$.
We place a $p$-operator space structure on $\sopg$, and demonstrate the naturality
of this by showing that it satisfies all natural functiorial properties:  projective
tensor products, restriction to subgroups and averaging over normal subgroups.
However, due to complications arising within the theory of $p$-operator spaces,
we are forced to work with weakly completely bounded maps in many of our results.
\end{abstract}

\maketitle

\footnote{{\it Date}: \today.

2000 {\it Mathematics Subject Classification.} Primary 43A15, 47L25;
Secondary 22D12, 46J10, 47L50.
{\it Key words and phrases.} Fig\`{a}-Talamanca--Herz algebra, $p$-operator space,
Segal algebra.

The first named author would like to thank the Scientific Projects Coordination Unit
of the Istanbul University, IRP 11488,  and the 
University of Waterloo for hosting her visit fromApril 2011 to June 2012.  
The second named author would
like thank NSERC Grant 312515-2010.}

%%Main Body

\section{Preliminaries}

\subsection{Motivation and Plan}
In \cite{feichtinger}, Feichtinger devised for any abelian group $G$,
a Segal algebra $\mathrm{S}_0(G)$ in $\bl^1(G)$ which is minimal amongst
those Segal algebras which admit uniformly bounded multiplication by characters.
Taking the Fourier transform, this may be realised as the minimal Segal algebra
in the Fourier algebra $\mathrm{A}(\hat{G})$ which admits uniformly bounded
translations.  Replacing $\hat{G}$ by $G$, for any locally compact group $G$,
and then $\mathrm{A}(G)$ by certain spaces of locally integrable functions $B$,
Feichtinger (\cite{feichtinger1}) discussed the class of minimal homogeneous
Banach spaces $B_{\min}$.  Amongst the allowable spaces discussed in
\cite{feichtinger1} are the Fig\`{a}-Talamanca--Herz algebras $\apg$, for $1<p<\infty$
of \cite{herz} and, in the abelian case, \cite{figatalamanca}.  In this paper
we discuss $\sopg=\apg_{\min}$, which we call the 
{\em $p$-Feichtinger--Fig\`{a}-Talamanca--Herz Segal algebra} of $G$, or
simply {\em $p$-Feichtinger algebra} for short.

For $p=2$, the theory of operator spaces may be applied to $\mathrm{S}_0^2(G)$,
as was done by the second named author in \cite{spronk}.
This is particularly useful because it gives, for two locally compact groups $G$ and $H$, 
a projective tensor product formula
\[
\mathrm{S}_0^2(G)\hat{\otimes}^2\mathrm{S}_0^2(H)\cong
\mathrm{S}_0^2(G\times H)
\]
where $\hat{\otimes}^2$ is the operator projective tensor product of Effros and Ruan
(\cite{effrosr}).  This, of course, is in line with their tensor product formula for preduals
of von Neuman algebras, and hence for Fourier algebras ({\em op.\ cit.})  
Losert (\cite{losert}) showed that, in general, the usual projective tensor product
of two Fourier algebras is not a Fourier algebra.

In the general case that $1<p<\infty$, various attempts have been made to
understand properties of $\apg$ via operator spaces.  See \cite{runde} and 
\cite{lambertnr}, for example.
Following the lead of Pisier (\cite{pisier}) and Le~Merdy (\cite{lemerdy}), Daws studied
properties of $\apg$ using $p$-operator spaces in \cite{daws}.  We summarise
many of Daws's results in Section \ref{ssec:popsp}.  Daws's work was followed
by An, Lee and Ruan (\cite{anleeruan}), where approximation properties were studied.  
For $p\not=2$ this theory
has many features which make it more difficult than classical operator space theory.
For example, there is a natural $p$-operator space dual structure, modelled on
the dual operator space structure of \cite{blecher}.  However, it is not, in general, the case
that the natual embedding into the second dual, $\kappa_\fV:\fV\to\fV^{**}$, is a complete
isometry.  See the summary in Proposition \ref{prop:kappainj}, below.  Even in cases
where $\kappa_\fV$ is a complete isometry,  it is not clear that a 
map $S$, for which $S^*$ is a complete isometry, is itself
is a complete quotient.  These facts forced Daws to express many results
of his as simple isometric results, and hence forced An, Lee and Ruan to do the same.
In Section \ref{ssec:wcbmaps}, we make a modest augmentation to this, and devise a theory of 
{\em weakly completely bounded} maps, hence of {\em weakly complete quotient} maps, 
to refine this theory.  In particular we see that Daws's tensor product formula, for amenable
$G$ and $H$,
\[
\apg\pptens\ap(H)\cong\ap(G\times H)
\]
is really a weakly completely isometric formula.

Many of the issues discussed above make certain matters of even defining the $p$-operator space
structure on $\sopg$ more daunting than in the $p=2$ case.  However, there is value in this
exercise as it has forced us to devise much more elementary --- though harder --- proofs, than
were found in \cite{spronk}.  In many ways, these results shed new light on the $p=2$
setting.  We justify this effort with our tensor product formula in Section \ref{ssec:tensorproducts}.
Moreover, we show the naturality of this $p$-operator space structure by demonstrating
a restiction theorem in Section \ref{ssec:restriction}, and an averaging theorem
in Section \ref{ssec:averaging}.  However, all these results live in the category
of $p$-operator spaces with morphisms of weakly completely bounded maps.

We also highlight a result which does not use operator spaces, and is new even for $p=2$
when $G$ is non-abelian.  $\sopg=\apg_{\min}$ is the minimal Segal algebra in $\bl^1(G)$
which admits pointwise multiplication by $\apg$.  This is Theorem \ref{theo:mininLone}.

\subsection{$p$-Operator spaces}\label{ssec:popsp}
We use the theory of $p$-operator spaces as presented by Daws \cite{daws}.
We shall also use the paper of An, Lee and 
Ruan~\cite{anleeruan}, and the thesis of Lee \cite{leeT}.
%and the earlier habilitation of Junge \cite{junge}.

Fix $1<p<\infty$ and let $p'$ be the conjugate index given by $\frac{1}{p}+\frac{1}{p'}=1$.
We let $\ell^p_n$ denote the usual $n$-dimensional $\ell^p$-space.
An {\em $p$-operator space structure}, on a complex vector space $\fV$, is
a family of norms $\{\norm{\cdot}_n\}_{n=1}^\infty$, each on the space
$\mat_n(\fV)$ of $n\times n$ matricies with entries in $\fV$, which satisfy
\begin{align*}
(D)&\quad\norm{\begin{bmatrix} v & 0 \\ 0& w\end{bmatrix}}_{n+m}
=\max\{\norm{v}_n,\norm{w}_n\} \\
(M_p)&\quad \norm{\alp v\beta}_n\leq\norm{\alp}_{\fB(\ell^p_n)}\norm{v}_n
\norm{\beta}_{\fB(\ell^p_n)}
\end{align*}
where $v\in\mat_n(\fV)$, $w\in\mat_m(\fV)$ and $\alp,\beta\in\mat_n$, the scalar
$n\times n$-matricies which we hereafter identify with $\fB(\ell^p_n)$.  
We will call $\fV$, endowed with a precscribed $p$-operator space
structure, a {\em $p$-operator space}.

A linear map between $p$-operator spaces $T:\fV\to\fW$ is called
{\em completely bounded} if the family of amplifications $T^{(n)}:\mat_n(\fV)\to\mat_n(\fW)$,
each given by $T^{(n)}[v_{ij}]=[Tv_{ij}]$,
is uniformly bounded, and let $\pcbnorm{T}=\sup_{n\in\En}\|T^{(n)}\|$.  Moreover
we say that $T$ is a {\em complete contraction}, or a {\em complete isometry}, if
each $T^{(n)}$ is a contraction, or, respectively, an isometry.  As proved in
\cite{pisier,lemerdy}, given a $p$-operator space $\fV$, there is a subspace $E$
of a quotient space of some $L^p$-space, and a complete isometry
$\pi:\fV\to\fB(E)$.  Here $\mat_n(\fB(E))\cong\fB(\ell^p_n\otimes^pE)$, isometrically,
where $\otimes^p$ signifies that the tensor product is normed by the identification 
$\ell^p_n\otimes^p E\cong\ell^p(n,E)$.
We shall say that $\fV$ {\em acts on $L^p$}, if there is a competely isometric
representation of $\fV$ into $\fB(L^p(X,\mu))$ for a measure space $(X,\mu)$.

We briefly review the significant structures of $p$-operator spaces, as identified by Daws.
If $\fV$ and $\fW$ are $p$-operator spaces, the space $\fC\fB_p(\fV,\fW)$ of completely bounded
maps between $\fV$ and $\fW$ is itself an operator space thanks to the isometric identifications
$\mat_n(\fC\fB_p(\fV,\fW))\cong\fC\fB_p(\fV,\mat_n(\fW))$.  Each bounded linear functional
$f$ in $\fV^*$ is automatically completely bounded with $\pcbnorm{f}=\norm{f}$, and hence
we have $\mat_n(\fV^*)\cong\fC\fB_p(\fV,\mat_n)$.  We record the following vital observations
\cite[Thm.\ 4.3 \& Prop.\ 4.4]{daws}.

\begin{proposition}\label{prop:kappainj}
{\bf (i)} If $S:\fV\to\fW$ is a complete contraction, then $S^*:\fW^*\to\fV^*$
is a complete contraction.

{\bf (ii)} A dual $p$-operator space acts on $L^p$.

{\bf (iii)} The cannonical injection $\kappa_\fV:\fV\to\fV^{**}$ is a complete
contraction, and is a complete isometry if and only if $\fV$ acts on $L^p$.
\end{proposition}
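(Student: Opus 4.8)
The plan is to dispatch (i) and the contractivity of $\kappa_\fV$ by bookkeeping with the dual matrix norms, to prove (ii) by an explicit $\ell^p$-direct-sum construction, and then to deduce the equivalence in (iii) by combining (ii) with a compression argument which is the only substantial point. For (i): under the identifications $\mat_n(\fW^*)\cong\fC\fB_p(\fW,\mat_n)$ and $\mat_n(\fV^*)\cong\fC\fB_p(\fV,\mat_n)$, the amplification $(S^*)^{(n)}$ carries a completely bounded $\gamma\colon\fW\to\mat_n$ to $\gamma\circ S\colon\fV\to\mat_n$, and $\pcbnorm{\gamma\circ S}\le\pcbnorm{\gamma}\pcbnorm{S}\le\pcbnorm{\gamma}$ since amplification is multiplicative on composites; hence every $(S^*)^{(n)}$ is a contraction. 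The same identifications show that for $v\in\mat_n(\fV)$ the element $\kappa_\fV^{(n)}(v)\in\mat_n(\fV^{**})\cong\fC\fB_p(\fV^*,\mat_n)$ is the map $\Phi_v\colon f\mapsto f^{(n)}(v)=[f(v_{ij})]$. A direct computation shows that for $x\in\mat_k(\fV^*)$ corresponding to $\psi\colon\fV\to\mat_k$, the matrix $\Phi_v^{(k)}(x)\in\mat_k(\mat_n)$ is obtained from $\psi^{(n)}(v)\in\mat_n(\mat_k)$ by the coordinate flip $\ell^p_n\otimes^p\ell^p_k\cong\ell^p_k\otimes^p\ell^p_n$, which is isometric on the ambient $L^p$-space and hence on the operators; therefore $\norm{\Phi_v^{(k)}(x)}\le\pcbnorm{\psi}\norm{v}_{\mat_n(\fV)}=\norm{x}_{\mat_k(\fV^*)}\norm{v}_{\mat_n(\fV)}$, so $\pcbnorm{\Phi_v}\le\norm{v}_{\mat_n(\fV)}$ and $\kappa_\fV$ is a complete contraction.

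For (ii), write $\fV=\fW^*$. For every $n\in\En$ and every $v$ in the closed unit ball of $\mat_n(\fW)$ put $\rho_{n,v}\colon\fV\to\mat_n=\fB(\ell^p_n)$, $\rho_{n,v}(f)=f^{(n)}(v)=[f(v_{ij})]$; since $\pcbnorm{f}=\norm{f}$ on $\fV=\fW^*$ we get $\norm{\rho_{n,v}(f)}\le\norm{f}$. Hence $\rho=\bigoplus_{n,v}\rho_{n,v}$ maps $\fV$ boundedly into $\fB(L^p(I))$, where $L^p(I)=\bigoplus^p_{n,v}\ell^p_n$ is the $L^p$-space of the counting measure on a (large) set $I$, hence a genuine $L^p$-space. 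To see $\rho$ is completely isometric, fix $F\in\mat_m(\fV)$ corresponding to $F_0\colon\fW\to\mat_m$ under $\mat_m(\fW^*)\cong\fC\fB_p(\fW,\mat_m)$; the operator $\rho^{(m)}(F)$ on $\ell^p_m\otimes^pL^p(I)=\bigoplus^p_{n,v}\ell^p_m\otimes^p\ell^p_n$ is block diagonal, and its $(n,v)$-block is $F_0^{(n)}(v)$ up to the flip $\ell^p_m\otimes^p\ell^p_n\cong\ell^p_n\otimes^p\ell^p_m$; taking the supremum over $(n,v)$ gives $\norm{\rho^{(m)}(F)}=\sup_n\norm{F_0^{(n)}}=\pcbnorm{F_0}=\norm{F}_{\mat_m(\fV)}$.

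For the equivalence in (iii): if $\kappa_\fV$ is a complete isometry, then $\fV$ is completely isometric to a subspace of $\fV^{**}=(\fV^*)^*$, which acts on $L^p$ by (ii), so $\fV$ acts on $L^p$ by restricting the representation. Conversely, let $\pi\colon\fV\to\fB(L^p(X,\mu))$ be completely isometric, so $\norm{\pi^{(n)}(v)}_{\fB(\ell^p_n(L^p(X,\mu)))}=\norm{v}_{\mat_n(\fV)}$. By the first paragraph it suffices, for $v\in\mat_n(\fV)$ and $\eps>0$, to produce a complete contraction $\psi\colon\fV\to\mat_N$ (some $N$) with $\norm{\psi^{(n)}(v)}_{\mat_n(\mat_N)}>\norm{v}_{\mat_n(\fV)}-\eps$: such a $\psi$ corresponds to an element of the unit ball of $\mat_N(\fV^*)$, so with the flip identification above it forces $\pcbnorm{\Phi_v}\ge\norm{v}_{\mat_n(\fV)}$, hence equality. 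To build $\psi$, choose $\xi$ in the unit ball of $\ell^p_n(L^p(X,\mu))$ with $\norm{\pi^{(n)}(v)\xi}_p$ close to $\norm{\pi^{(n)}(v)}$; the finitely many coordinates of $\xi$ and of $\pi^{(n)}(v)\xi$ lie in an $L^p$ over a $\sigma$-finite set, and are, up to arbitrarily small $L^p$-error, constant on the atoms of a finite partition $\{A_1,\dots,A_N\}$ into sets of finite positive measure. The conditional expectation $P$ onto this partition is a norm-one projection of $L^p(X,\mu)$ with range $R\cong\ell^p_N$ isometrically; set $\psi(w)=P\pi(w)|_R\colon\fV\to\fB(R)\cong\mat_N$, a complete contraction since it is compression by a contractive projection. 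Then $\psi^{(n)}(v)$, as an operator on $\ell^p_n(R)$, is the corresponding compression of $\pi^{(n)}(v)$, and evaluating it at the (nearly norm-one) vector $P^{(n)}\xi$ recovers $\norm{\pi^{(n)}(v)\xi}_p$ up to the approximation errors, which yields the estimate once the partition is fine enough. The only real obstacle is this last compression step: it is exactly where "acts on $L^p$" cannot be weakened to "acts on a quotient of a subspace of an $L^p$-space", since a general such space need not carry norm-one finite-rank compressions — precisely the phenomenon responsible for $\kappa_\fV$ failing to be a complete isometry in general.
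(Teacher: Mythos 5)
Your proof is correct, and the paper itself offers no proof of this proposition --- it simply records the statement from \cite[Thm.~4.3 \& Prop.~4.4]{daws} --- so the right comparison is with that source, whose arguments yours essentially reconstructs. Your three ingredients --- the identification $\mat_n(\fV^*)\cong\fC\fB_p(\fV,\mat_n)$ combined with the isometric coordinate flip $\ell^p_n\otimes^p\ell^p_k\cong\ell^p_k\otimes^p\ell^p_n$ for (i) and the contractivity of $\kappa_\fV$, the block-diagonal representation of a dual space on an $\ell^p$-direct sum of the spaces $\ell^p_n$ for (ii), and the conditional-expectation compression onto $\ell^p_N$-subspaces of $L^p(\mu)$ for the converse direction of (iii), including your closing remark that this compression is precisely what fails for general subspaces of quotients of $L^p$-spaces --- are the standard route and are carried out correctly.
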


Given a vector space $\fV$ whose dual is a $p$-operator space, we let $\fV_D$
denote $\fV$ with the ``dual" $p$-operator space structure, 
i.e.\ that space which makes $\kappa_\fV:\fV\to\fV^{**}$ a complete isometry.

\begin{corollary}\label{cor:kappainj1}
Let $\fV$ and $\fW$ be $p$-operator spaces such that $\fW$ acts on $L^p$.
Then $\fC\fB_p(\fV,\fW)=\fC\fB_p(\fV_D,\fW)$ completely isometrically.
\end{corollary}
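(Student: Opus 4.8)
The goal is to show that the $p$-operator space structure on $\fC\fB_p(\fV,\fW)$ is unchanged when we replace $\fV$ by $\fV_D$, provided the target $\fW$ acts on $L^p$. The plan is to establish the matrix-level isometric identifications
\[
\mat_n(\fC\fB_p(\fV,\fW))\cong\fC\fB_p(\fV,\mat_n(\fW))
\qquad\text{and}\qquad
\mat_n(\fC\fB_p(\fV_D,\fW))\cong\fC\fB_p(\fV_D,\mat_n(\fW))
\]
(which hold by definition of the $p$-operator space structure on spaces of completely bounded maps), so that it suffices to prove the $n=1$ statement: $\fC\fB_p(\fV,\fW)=\fC\fB_p(\fV_D,\fW)$ \emph{isometrically}, for every $p$-operator space $\fW$ acting on $L^p$. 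Indeed, once this isometric equality is known for all such $\fW$, I can apply it with $\fW$ replaced by $\mat_n(\fW)$ --- noting that if $\fW$ acts on $L^p(X,\mu)$ then $\mat_n(\fW)$ acts on $L^p$ as well, since it embeds completely isometrically into $\mat_n(\fB(L^p(X,\mu)))\cong\fB(\ell^p_n\otimes^p L^p(X,\mu))$ and $\ell^p_n\otimes^p L^p(X,\mu)$ is again (isometric to) an $L^p$-space --- and then reassemble via the two matricial identifications above to get the completely isometric statement.

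For the isometric $n=1$ statement, the underlying vector spaces of $\fV$ and $\fV_D$ coincide, and every bounded linear map is among the candidates, so the real content is the equality of the $p\mathrm{cb}$-norms. First I would check $\pcbnorm{T}_{\fV_D\to\fW}\le\pcbnorm{T}_{\fV\to\fW}$: this is immediate from Proposition \ref{prop:kappainj}(iii), since $\kappa_\fV:\fV\to\fV_D$ (i.e.\ $\fV\to\fV^{**}$ with its dual structure) is a complete contraction, hence $T\circ\kappa_\fV^{-1}$... — more carefully, the identity map $\fV\to\fV_D$ need not be a complete contraction, so I will instead argue through the predual duality. The clean route: since $\fW$ acts on $L^p$, by Proposition \ref{prop:kappainj}(iii) $\kappa_\fW:\fW\to\fW^{**}$ is a complete isometry, so $\pcbnorm{T}=\pcbnorm{\kappa_\fW\circ T}$ and it suffices to compare norms of maps into $\fW^{**}$. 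A map into a dual $p$-operator space factors through its predual structure in a controlled way; in fact the key point is the standard adjoint argument: for $T:\fV\to\fW^{**}$, one has $\pcbnorm{T}$ computed via $T^*$ or $T^{**}$, and $T$ factors as $\kappa_{\fW^{**}}\circ T$ composed with structure that only sees $\fV^{**}$. The point I want to extract is that $\pcbnorm{T:\fV\to\fW}$ depends only on $\kappa_\fW\circ T:\fV\to\fW^{**}$ and this in turn, because $\fW^{**}$ is a dual space, factors through $\kappa_\fV$ giving $\pcbnorm{T:\fV\to\fW}=\pcbnorm{\til T:\fV_D\to\fW}$.

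Concretely, here is the mechanism I would use. Given $T\in\fC\fB_p(\fV,\fW)$, by Proposition \ref{prop:kappainj}(i) the adjoint $T^*:\fW^*\to\fV^*$ is completely bounded with $\pcbnorm{T^*}\le\pcbnorm{T}$, and then $T^{**}:\fV^{**}\to\fW^{**}$ is completely bounded with $\pcbnorm{T^{**}}\le\pcbnorm{T^*}\le\pcbnorm{T}$. Now $T^{**}\circ\kappa_\fV=\kappa_\fW\circ T$, and since $\kappa_\fW$ is a complete isometry (as $\fW$ acts on $L^p$),
\[
\pcbnorm{T:\fV\to\fW}=\pcbnorm{\kappa_\fW\circ T}=\pcbnorm{T^{**}\circ\kappa_\fV}\le\pcbnorm{T^{**}}\le\pcbnorm{T}.
\]
But $\pcbnorm{T^{**}\circ\kappa_\fV}$, computed with $\kappa_\fV:\fV\to\fV^{**}$ a \emph{complete isometry} onto its range with the dual structure, is exactly $\pcbnorm{T^{**}|_{\ran\kappa_\fV}\circ\kappa_\fV}$ where now $\kappa_\fV$ is a surjective complete isometry onto $\ran\kappa_\fV\subseteq\fV^{**}$; and by definition $\fV_D$ is $\fV$ carrying the structure pulled back along $\kappa_\fV$, so $\pcbnorm{T:\fV_D\to\fW}=\pcbnorm{T^{**}|_{\ran\kappa_\fV}:\ran\kappa_\fV\to\fW}$. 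Chasing these identifications gives both inequalities $\pcbnorm{T:\fV\to\fW}\le\pcbnorm{T:\fV_D\to\fW}$ and the reverse, hence equality.

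The main obstacle I anticipate is purely one of bookkeeping rather than deep mathematics: the failure, in the $p\ne2$ world, for $\kappa_\fV$ to be a complete isometry in general (Proposition \ref{prop:kappainj}(iii)) means I cannot simply say ``$\fV$ and $\fV_D$ have the same maps out of them'' --- the direction $\pcbnorm{T:\fV_D\to\fW}\le\pcbnorm{T:\fV\to\fW}$ is the delicate one and must be routed through the fact that $T$ \emph{factors} through $\kappa_\fV$ up to the complete isometry $\kappa_\fW$, using crucially that $\fW$ acts on $L^p$. I would take care to verify that $T$ really does corestrict to a map on $\ran\kappa_\fV$ with values landing in $\ran\kappa_\fW\cong\fW$ (it does: $T^{**}(\kappa_\fV(v))=\kappa_\fW(Tv)$), and that the $p$-operator space structure on $\ran\kappa_\fV$ inherited as a subspace of $\fV^{**}$ is by construction the one on $\fV_D$. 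Once these identifications are pinned down, the equality of norms, and then — via the two standard matricial identifications for spaces of completely bounded maps — the complete isometry, both follow.
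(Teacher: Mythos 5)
Your argument is correct and is essentially the paper's proof: you factor $T=\hat{\kappa}_\fW\circ T^{**}\circ\kappa_\fV$ as in (\ref{eq:Sfactorise}), using that $\kappa_\fW$ is a complete isometry because $\fW$ acts on $L^p$, to see that the $p\mathrm{cb}$-norms out of $\fV$ and $\fV_D$ agree, and then you pass to complete isometry by replacing $\fW$ with $\mat_n(\fW)$ (which still acts on $L^p$), exactly the paper's route. One small correction to your aside: the identity $\fV\to\fV_D$ \emph{is} a complete contraction --- Proposition \ref{prop:kappainj}(iii) says $\kappa_\fV$ is completely contractive and $\fV_D$ carries the structure pulled back along $\kappa_\fV$ --- and this is precisely what gives the easy inequality $\pcbnorm{T:\fV\to\fW}\le\pcbnorm{T:\fV_D\to\fW}$ that you leave to ``chasing identifications''.
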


\begin{proof}
We have that 
\begin{equation}\label{eq:Sfactorise}
S=\hat{\kappa}_\fW\circ S^{**}\circ\kappa_\fV
\end{equation} 
where the left inverse
$\hat{\kappa}_\fW:\kappa_\fW(\fW)\to\fW$ is a complete isometry, by
vitrue of (iii) in the proposition above.  In other words $S$ factors through $\fV_D=\kappa_\fV(\fV)$.
It follows that $S:\fV\to\fW$ is a complete contraction exactly when
$S:\fV_D\to\fW$ is a complete contraction.  Hence
$\fC\fB_p(\fV,\fW)=\fC\fB_p(\fV_D,\fW)$ isometrically.  Replacing $\fW$ with
$\mat_n(\fW)$, for each $n$, demonstrates that this is a completely isometric
identification.
\end{proof}

The quotient structure is of particular interest to us:  
if $\fW$ is a closed subspace of $\fV$
then we identify, isometrically $\mat_n(\fV/\fW)\cong\mat_n(\fV)/\mat_n(\fW)$.  
A linear map $Q:\fV\to\fW$ is a {\em complete quotient} map if the induced
map $\til{Q}:\fV/\ker Q\to\fW$ is a complete isometry.

For convenience, we let $\fV$ and $\fW$ be complete.
Thanks to Daws \cite{daws}, we have a $p$-operator projective tensor product $\hat{\otimes}^p$.  It
obeys the usual functorial properties:  commutativity: the flip map $\Sigma:\fV\hat{\otimes}^p\fW
\to\fW\hat{\otimes}^p\fV$ is a complete isometry; duality: $(\fV\hat{\otimes}^p\fW)^*\cong
\fC\fB_p(\fV,\fW^*)$, completely isometrically; and projectivity:  if $\fV_1\subset\fV$ and
$\fW_1\subset\fW$ are closed subspaces, then $(\fV/\fV_1)\hat{\otimes}^p(\fW/\fW_1)$
is a complete quotient of $\fV\hat{\otimes}^p\fW$.  We will have occasion to consider
the non-completed dense subspace $\fV\otimes_{\wedge p}\fW$, which is
the algebraic tensor product of $\fV$ with $\fW$, with the inherited $p$-operator space structure.

Given a measure space $(X,\mu)$ we let 
\[
\nmat^p(\mu)=\fN(\bl^p(\mu))\cong\bl^{p'} (\mu)\otimes^\gam\bl^p(\mu)
\]
denote the space of nuclear operators on $\bl^p(\mu)$.  
Here, $\otimes^\gam$ denotes the projective tensor product of Banach spaces.
We note that $\nmat^p(\mu)^*\cong\fB(\bl^p(\mu))$, from which
$\nmat^p(\mu)^*$ is assigned the dual operator space structure.  We record the
following, whose proof is similar to aspects of \cite[Prop.\ 5.2]{daws} and will be omitted.

\begin{proposition}\label{prop:nucsubinj}
If $Y$ is a non-$\mu$-null subset of $X$, the $\nmat^p(\mu|_Y)$ is
a completely contractively complemented subspace of $\nmat^p(\mu)$.
Hence for any operator space $\fV$, $\nmat^p(\mu|_Y)\hat{\otimes}^p\fV$
identifies completely isometrically as a subspace of $\nmat^p(\mu)\hat{\otimes}^p\fV$.
\end{proposition}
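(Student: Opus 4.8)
The plan is to produce an explicit completely contractive idempotent on $\nmat^p(\mu)$ with range, completely isometrically, $\nmat^p(\mu|_Y)$, and then to obtain the tensor statement by amplifying this idempotent with $\id_\fV$. Write $Y^c=X\setminus Y$. First I would record the splitting $\bl^p(\mu)=\bl^p(\mu|_Y)\oplus_p\bl^p(\mu|_{Y^c})$, let $\iota\colon\bl^p(\mu|_Y)\to\bl^p(\mu)$ be the isometric inclusion and $\rho\colon\bl^p(\mu)\to\bl^p(\mu|_Y)$ the restriction, so that $\rho\iota=\id$ and $P:=\iota\rho\in\fB(\bl^p(\mu))$ is multiplication by the indicator function $\mathbf{1}_Y$. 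The two elementary inputs are: (a) multiplication by an indicator function is a completely contractive idempotent on every $\bl^p$-space, since on the $n$-th matrix level, under $\mat_n(\bl^p(\cdot))\cong\ell^p_n\otimes^p\bl^p(\cdot)\cong\bl^p(n,\cdot)$, it is again multiplication by an indicator function, hence a contraction; and (b) composing a nuclear operator with bounded operators does not increase its nuclear norm, while extending a nuclear operator on $\bl^p(\mu|_Y)$ by zero off that subspace preserves the nuclear norm.

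Now set $\Phi\colon\nmat^p(\mu)\to\nmat^p(\mu)$, $\Phi(T)=PTP$. By (b) this is a contractive idempotent, and $T\mapsto\iota T\rho$ identifies $\nmat^p(\mu|_Y)=\fN(\bl^p(\mu|_Y))$ isometrically with the range of $\Phi$. The substantive step is to upgrade these statements to the completely contractive, resp.\ completely isometric, level. Here I would invoke Daws's description of the $p$-operator matrix norms on spaces of nuclear operators: on $\mat_n(\nmat^p(\mu))$ the amplification $\Phi^{(n)}$ is implemented by compression against $\id_{\ell^p_n}\otimes P$, which by (a) is a contraction on $\ell^p_n\otimes^p\bl^p(\mu)$; since $\mat_n(\nmat^p(\mu))$ is, by definition, a space of nuclear-type operators built from $\ell^p_n\otimes^p\bl^p(\mu)$, the two facts in (b) apply verbatim and show $\Phi^{(n)}$ is a contractive idempotent with the stated range. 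This yields that $\nmat^p(\mu|_Y)$ is a completely contractively complemented subspace of $\nmat^p(\mu)$. (Alternatively, $\Phi$ is the restriction to $\nmat^p(\mu)$ of the weak-$*$-continuous map $S\mapsto PSP$ on $\nmat^p(\mu)^*\cong\fB(\bl^p(\mu))$; the latter is completely contractive because $\mat_n(\fB(\bl^p(\mu)))\cong\fB(\ell^p_n\otimes^p\bl^p(\mu))$ and one is again compressing by a contraction, and, since that map is $\Phi^*$, Proposition~\ref{prop:kappainj}(i) together with naturality of $\kappa$ forces $\Phi$ to be completely contractive, provided one knows that $\nmat^p(\mu)$ acts on $L^p$ so that $\kappa_{\nmat^p(\mu)}$ is a complete isometry by Proposition~\ref{prop:kappainj}(iii).)

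For the second assertion, let $\Psi\colon\nmat^p(\mu)\to\nmat^p(\mu|_Y)$ be the corestriction of $\Phi$ and $j\colon\nmat^p(\mu|_Y)\to\nmat^p(\mu)$ the inclusion, so that $j$ is a complete isometry, $\Psi$ a complete contraction, $\Psi j=\id$, and $j\Psi=\Phi$. Tensoring with $\id_\fV$ and using the functoriality of $\hat{\otimes}^p$ with respect to complete contractions, $j\hat{\otimes}^p\id_\fV\colon\nmat^p(\mu|_Y)\hat{\otimes}^p\fV\to\nmat^p(\mu)\hat{\otimes}^p\fV$ is a complete contraction admitting the complete-contractive left inverse $\Psi\hat{\otimes}^p\id_\fV$, hence is a complete isometry onto its range; this is precisely the claimed identification.

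I expect the main obstacle to be purely one of bookkeeping: matching Daws's definition of the $p$-operator space structure on the spaces of nuclear operators (and its matrix levels) with the naive ``compression by $\id_{\ell^p_n}\otimes P$'' picture used above, and recording cleanly the standard fact that indicator multipliers are completely contractive on $\bl^p$-spaces, together with the behaviour of nuclear norms under composition with contractions and extension by zero. Once these are in place the argument is soft, resting only on the functoriality of $\hat{\otimes}^p$.
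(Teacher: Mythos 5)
Your overall strategy is the intended one (the paper omits this proof, referring to Daws's Prop.\ 5.2): compress by the multiplication projection $P=1_Y$ to get a completely contractive idempotent on $\nmat^p(\mu)$ with range $\nmat^p(\mu|_Y)$, and then deduce the tensor statement from functoriality of $\hat{\otimes}^p$ applied to the pair $(j,\Psi)$ — it is exactly the complemented situation, not mere complete isometry, that survives the projective tensor product, and you use it correctly.

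One caveat on the key step. Your ``primary'' justification of complete contractivity rests on the assertion that $\mat_n(\nmat^p(\mu))$ is ``by definition a space of nuclear-type operators built from $\ell^p_n\otimes^p\bl^p(\mu)$''; this is not how the matrix norms on $\nmat^p(\mu)$ are defined, and it conflates the operator-space levels $\mat_n(\nmat^p(\mu))$ with the dual-matrix levels $\nmat_n(\nmat^p(\mu))=\nmat^p(\mu)\hat{\otimes}^p\nmat^p_n$ used later in the paper (arguing at the $\nmat_n$-level would only give a \emph{weakly} complete statement, which is strictly weaker than what the proposition asserts). The argument to keep is your ``alternative'': the matrix norms on $\nmat^p(\mu)$ come from the trace duality with $\fB(\bl^p(\mu))$, i.e.\ $\mat_n(\nmat^p(\mu))\subset\fC\fB_p(\fB(\bl^p(\mu)),\mat_n)$, so $\Phi^{(n)}[\omega_{ij}]$ corresponds to the map $S\mapsto[\langle\omega_{ij},PSP\rangle]$, which is the map of $[\omega_{ij}]$ composed with the complete contraction $S\mapsto PSP$ on $\fB(\bl^p(\mu))$ (where $\mat_n(\fB(\bl^p(\mu)))\cong\fB(\ell^p_n\otimes^p\bl^p(\mu))$ and one compresses by $\id\otimes P$); the isometry of $j$ at each matrix level follows likewise because $S\mapsto \iota S\rho$ is a completely isometric embedding $\fB(\bl^p(\mu|_Y))\to\fB(\bl^p(\mu))$ splitting the compression. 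Phrased this way you never need $\kappa_{\nmat^p(\mu)}$; and your proviso that $\nmat^p(\mu)$ acts on $L^p$ is in any case automatic, since with this structure $\nmat^p(\mu)$ sits completely isometrically inside the dual space $\fB(\bl^p(\mu))^*$, and dual $p$-operator spaces act on $L^p$ by Proposition~\ref{prop:kappainj}(ii). With that adjustment the proof is complete and agrees with the argument the paper has in mind.
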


For a $p$-operator space $\fV$, structures related to infinite matricies, $\mat_\infty(\fV)$,
and infinite matrices approximable by finite submatricies, $\kmat_\infty(\fV)$,
were worked out in \cite{leeT}, with details similar to \cite[\S 10.1]{effrosrB}.
For $S$ in $\fC\fB_p(\fV,\fW)$ we define the amplification $S^{(\infty)}:
\mat_\infty(\fV)\to\mat_\infty(\fW)$ in the obvious manner.  We observe that
$S$ is completely contractive (respectively, completely isometric) if and only of $S^{(\infty)}$ 
is contractive (respectively, isometric); and
$S$ is a complete quotient map if and only if $S^{(\infty)}|_{\kmat_\infty(\fV)}$
is a quotient map.  We will call $S$ a {\em complete surjection} when
$S^{(\infty)}|_{\kmat_\infty(\fV)}$ is a surjection.  An application of the open mapping
theorem shows that this is equivalent to having that the operators $S^{(n)}$ are uniformly
bounded below.

\subsection{Weakly completely bounded maps}\label{ssec:wcbmaps}
Various constructions that we require will not obviously respect completely
bounded maps.  However, they may be formulated with the help of a
formally more general concept.  A linear map between $p$-operator
spaces $S:\fV\to\fW$ will be called {\em weakly completely bounded}
provided that its adjoint $S^*:\fW^*\to\fV^*$ is completely bounded.
We have an obvious similar definition of a {\em weakly completely contractive}
map.   Thanks to Proposition \ref{prop:kappainj},
any complete contraction is a weakly complete contraction, and the
converse holds when $\fV$ and $\fW$ both act on $L^p$ 
(i.e.\ $\kappa_\fW\circ S=S^{**}\circ\kappa_\fV$).
We will say $S$ is a {\em weakly complete quotient} map if $S^*$ is a complete
isometry.  Thus a {\em weakly complete isometry} is an injective weakly complete
quotient map.  It is shown in \cite[Lem.\ 4.6]{daws} that a complete quotient map
is a weakly complete quotient map.  Due to the absence of a Wittstock
extension theorm --- i.e.\ we do not know if $\fB(\ell^p_n)$ is injective in the category
of $p$-operator spaces --- we do not know if a weakly complete quotient map is a 
complete quotient map, even when $\fV$ acts on $L^p$.

We say that a dual $p$-operator space $\fV^*$ {\em acts weak* on $L^p$}, if there
is a weak*-continuous complete isometry $\fV^*\hookrightarrow \fB(\bl^p(\mu))$
for some $\bl^p(\mu)$.  With this terminology, the following proposition
uses exactly the proof of \cite[Prop.\ 5.5]{daws}.

\begin{proposition}\label{prop:dualspace}
Let $\fV$ be a $p$-operator space for which $\fV^*$ acts weak* on $L^p$.
Then $\id:\fV\to\fV_D$ is a weakly complete isometry.
\end{proposition}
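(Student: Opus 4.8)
Unwinding the definitions, and noting that $\id$ is obviously injective, what must be shown is that the adjoint $\id^*\colon(\fV_D)^*\to\fV^*$ is a complete isometry. One inequality is automatic: by Proposition \ref{prop:kappainj}(iii) the canonical map $\kappa_\fV\colon\fV\to\fV^{**}$ is a complete contraction, and $\fV_D$ is precisely $\kappa_\fV(\fV)$ carrying the subspace $p$-operator space structure of $\fV^{**}$, so $\id\colon\fV\to\fV_D$ is completely contractive; hence $\id^*$ is completely contractive by Proposition \ref{prop:kappainj}(i). It remains to establish the reverse inequality on matrix norms.

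The plan for this is to build a concrete model of $\fV$. A weak*-continuous complete isometry $j\colon\fV^*\hookrightarrow\fB(\bl^p(\mu))=\nmat^p(\mu)^*$ is the adjoint of a unique pre-adjoint $\psi\colon\nmat^p(\mu)\to\fV$, so $\psi^*=j$. Since $j$ is an isometry at the ground level, $\psi$ is a metric surjection, so $\fV\cong\nmat^p(\mu)/\ker\psi$ isometrically and $j(\fV^*)=(\ker\psi)^\perp$ is a weak*-closed subspace of $\fB(\bl^p(\mu))$. Invoking the standard $p$-operator space duality $(\nmat^p(\mu)/\ker\psi)^*\cong(\ker\psi)^\perp$ (completely isometrically), together with the hypothesis that $j$ is a complete isometry onto its range, we obtain that $\psi$ is a weakly complete quotient map, i.e.\ $\psi^*=j$ realizes $\fV^*$ completely isometrically.

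Next I would identify $\fV_D$ with the pertinent double-dual structure. Applying Proposition \ref{prop:kappainj}(i) twice to $\psi^*=j$ shows that $\psi^{**}$, hence also $\psi^{**}\circ\kappa_{\nmat^p(\mu)}=\kappa_\fV\circ\psi$, is completely contractive; as this latter map takes its values in $\kappa_\fV(\fV)$ and $\kappa_\fV$ identifies $\fV_D$ completely isometrically with $\kappa_\fV(\fV)\subseteq\fV^{**}$, the map $\psi\colon\nmat^p(\mu)\to\fV_D$ is completely contractive. Comparing the quotient structure on $\nmat^p(\mu)/\ker\psi$ with the structure that $\fV_D$ carries inside $\fV^{**}\cong(\nmat^p(\mu)/\ker\psi)^{**}$, and using naturality of the canonical embeddings $\kappa$, one checks that $\psi\colon\nmat^p(\mu)\to\fV_D$ is in fact a complete quotient map; by \cite[Lem.\ 4.6]{daws} it is then a weakly complete quotient map, so $j\circ\id^*=(\id\circ\psi)^*$ is a complete isometry of $(\fV_D)^*$ into $\fB(\bl^p(\mu))$. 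Since $j$ is itself a complete isometry, it follows that $\id^*$ is a complete isometry, which is the claim.

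The main obstacle is this last identification --- promoting $\psi\colon\nmat^p(\mu)\to\fV_D$ from a completely contractive surjection to a (weakly) complete quotient map. This is the one step where it is essential that $\fV^*$ \emph{acts weak*} on $L^p$, rather than being merely an abstract $p$-operator space dual: that hypothesis is what pins $\fV_D$ down as the double-dual quotient structure $(\nmat^p(\mu)/\ker\psi)_D$ through which $\psi$ factors. The absence of a Wittstock extension theorem for $p$-operator spaces is precisely why one cannot instead shortcut the argument by extending matrix-valued functionals off the subspace $\kappa_\fV(\fV)$.
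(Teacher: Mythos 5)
Your overall skeleton is sound: reducing the statement to showing $\id^*\colon(\fV_D)^*\to\fV^*$ is a complete isometry, getting one direction from complete contractivity of $\kappa_\fV$, constructing the pre-adjoint $\psi$ with $\psi^*=j$, and the closing observation that if $j\circ\id^*$ and $j$ are complete isometries then so is $\id^*$, are all fine. The genuine gap is the pivotal assertion that ``one checks that $\psi\colon\nmat^p(\mu)\to\fV_D$ is in fact a complete quotient map.'' Granting your identifications ($\fV^*\cong(\ker\psi)^\perp\cong(\nmat^p(\mu)/\ker\psi)^*$ completely isometrically, hence $\fV_D\cong(\nmat^p(\mu)/\ker\psi)_D$ by naturality of the $\kappa$'s), that assertion says exactly that the quotient $p$-operator space structure on $\nmat^p(\mu)/\ker\psi$ coincides completely isometrically with its dual structure, i.e.\ by Proposition \ref{prop:kappainj}(iii) that this quotient acts on $L^p$. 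This is strictly stronger than the proposition you are proving (the proposition is precisely the weak form of that coincidence), and it is not available in general: taking $\fV=\apg_Q$ and $\psi=P_G$, your claim would say that $\id\colon\apg_Q\to\apg_D$ is a complete isometry for \emph{every} locally compact $G$, whereas the paper records that this is known only for amenable $G$ (\cite[Thm.\ 7.1]{daws}). Naturality of $\kappa$ only yields the complete contractivity you had already established in the previous step; comparing the matrix quotient norms (an infimum over liftings) with the bidual norms (a supremum over matrices of functionals) is exactly what cannot be done without a Wittstock-type extension theorem --- the obstruction the paper repeatedly flags, and, ironically, the one you cite at the end. So ``one checks'' hides the entire difficulty and cannot be filled in by the indicated means.

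What is actually needed is much weaker and does not pass through any quotient-structure identification: it suffices to show that each $[f_{ij}]\in\mat_n(\fV^*)$ has the same norm when regarded in $\mat_n((\fV_D)^*)\cong\fC\fB_p(\fV_D,\mat_n)$. One inequality comes from $\id^*$ being a complete contraction; for the other, a complete contraction $F\colon\fV\to\mat_n$ factors as $F=\hat{\kappa}_{\mat_n}\circ F^{**}\circ\kappa_\fV$, and since $\mat_n=\fB(\ell^p_n)$ acts on $L^p$, $\kappa_{\mat_n}$ is a complete isometry, so $F$ remains completely contractive on $\fV_D$; this is exactly the mechanism of Corollary \ref{cor:kappainj1} applied with $\fW=\fB(\ell^p_n)$, and it is how the argument the paper invokes (the proof of \cite[Prop.\ 5.5]{daws}, cited verbatim) avoids ever promoting $\psi$ to a complete quotient map. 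If you replace your ``complete quotient'' step by this comparison of dual matrix norms, the rest of your write-up (including the final deduction through $j$) goes through.
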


%The following technique is used in \cite{daws} to show that $\apg=\apg_D$ is
%a completely contractive Banach algebra. 
%t is valuable enough for us to highlight it.

%\begin{proposition}\label{prop:weakcctocc}
%Suppose $\fV=\fV_D$ and $\fW=\fW_D$ weakly completely isometrically, and
%$S:\fV\to\fW$ is a weakly completely contractive map.  Then $S:\fV_D\to\fW_D$
%\end{proposition}

%\begin{proof}
%This is fundamentally the same as the proof
%of Corollary \ref{cor:kappainj1}:  i.e.\ $S={\kappa_\fW|_{\kappa(\fW)}}^{-1}\circ S^{**}\kappa_\fV$.
%\end{proof}

Our analysis of weakly completely bounded maps will be facilitated by
some dual matrix constructions.  Let $\nmat_n^p=\fN(\ell^p_n)\cong
\ell^{p'}_n\otimes^\gam\ell^p_n$.  We let $\nmat_n(\fV)$ denote the space
of $n\times n$ matrices with entries in $\fV$, normed by the obvious identification
with $\fV\hat{\otimes}^p\nmat_n^p$.  If $T:\fV\to\fW$ is linear, we let
$\nmat_n(T):\nmat_n(\fV)\to\nmat_n(\fW)$ denote its amplification which is
identified with $T\otimes\id_{\nmat_n^p}$.  The spaces $\nmat^p_\infty$ and
$\nmat_\infty(\fV)$ are defined analagously, 
and so too is the map $\nmat_\infty(T)$, for completely bounded $T$.

\begin{lemma}\label{lem:nmatmaps}
Let $S:\fV\to\fW$ be a linear map between operator spaces.
Then the following are equivalent:

{\bf (i)} $S$ is weakly completely bounded (respectively, a weakly complete quotient map);

{\bf (ii)} there is $C>0$ such that
for each $n$ in $\En$, $\norm{\nmat_n(S)}\leq C$  %\pcbnorm{S^*}$
(respectively, $\nmat_n(S)$ is a quotient map);

{\bf (iii)} $\nmat_\infty(S)$ is defined and  bounded %$\norm{\nmat_n(S)}\leq \pcbnorm{S^*}$
(respectively, $\nmat_\infty(S)$ is a quotient map).

\noindent Moreover, the smallest value for $C$ in (ii), above, is $\pcbnorm{S^*}$.
\end{lemma}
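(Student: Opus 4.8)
The plan is to establish the equivalence by a cycle, with the key realization that $\nmat_n(S) = S \otimes \id_{\nmat_n^p}$ is, under the natural identifications, nothing but the ``dual'' version of the amplification $S^{(n)}$ acting on the dual side. First I would record the duality identity underlying everything: since $\nmat_n^p = \ell_n^{p'} \otimes^\gam \ell_n^p \cong \fN(\ell_n^p)$ has dual $\mat_n = \fB(\ell_n^p)$, and since $\mat_n(\fV^*) \cong \fC\fB_p(\fV,\mat_n) \cong (\fV \pptens \nmat_n^p)^* = \nmat_n(\fV)^*$ completely isometrically (using the duality property of $\pptens$ stated in the excerpt), we get a completely isometric identification $\nmat_n(\fV)^* \cong \mat_n(\fV^*)$. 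Under this identification I want to check that the adjoint of $\nmat_n(S) = S \otimes \id_{\nmat_n^p}$ is precisely $(S^*)^{(n)}: \mat_n(\fW^*) \to \mat_n(\fV^*)$; this is a routine pairing computation on elementary tensors $v \otimes \omega$ against functionals identified with matrices $[f_{ij}]$, using that $\id_{\nmat_n^p}^* = \id_{\mat_n}$. Granting this, $\norm{\nmat_n(S)} = \norm{\nmat_n(S)^*} = \norm{(S^*)^{(n)}}$, so $\sup_n \norm{\nmat_n(S)} = \sup_n \norm{(S^*)^{(n)}} = \pcbnorm{S^*}$, which is finite exactly when $S$ is weakly completely bounded. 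This simultaneously proves (i) $\Leftrightarrow$ (ii) for the boundedness statements and identifies the optimal constant $C$ as $\pcbnorm{S^*}$, giving the ``moreover'' clause.

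For the quotient-map assertions I would argue that $\nmat_n(S)$ is a quotient map if and only if its adjoint $(S^*)^{(n)}$ is an isometry. One direction is standard Banach-space duality (the adjoint of a quotient map is an isometry onto its range; here one needs that $\nmat_n(S)^* = (S^*)^{(n)}$ is injective with isometric image, i.e.\ that $(S^*)^{(n)}$ is an isometry). The converse — that $(S^*)^{(n)}$ being an isometry forces $\nmat_n(S)$ to be a quotient — is the familiar fact that a bounded operator between Banach spaces whose adjoint is an isometry is a metric surjection, provided one works with completed spaces (the excerpt has fixed $\fV,\fW$ complete, so $\nmat_n(\fV)$, $\nmat_n(\fW)$ are complete). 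Thus $\nmat_n(S)$ is a quotient map for all $n$ $\iff$ $(S^*)^{(n)}$ is an isometry for all $n$ $\iff$ $S^*$ is a complete isometry $\iff$ $S$ is a weakly complete quotient map. This closes (i) $\Leftrightarrow$ (ii) in the quotient case.

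The equivalence (ii) $\Leftrightarrow$ (iii) is a passage from finite to infinite matrices, analogous to the finite/infinite amplification remark made just before Subsection \ref{ssec:wcbmaps} for $\mat_\infty$ and $\kmat_\infty$. The space $\nmat_\infty^p$ is the closure of $\bigcup_n \nmat_n^p$, so $\nmat_\infty(\fV) = \fV \pptens \nmat_\infty^p$ contains each $\nmat_n(\fV)$ completely isometrically and complementably (Proposition \ref{prop:nucsubinj} with $\mu$ counting measure and $Y = \{1,\dots,n\}$, giving $\nmat_n^p = \nmat^p(\mu|_Y)$ completely contractively complemented in $\nmat_\infty^p$), and $\bigcup_n \nmat_n(\fV)$ is dense in $\nmat_\infty(\fV)$. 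Hence $\nmat_\infty(S)$ is defined and bounded with $\norm{\nmat_\infty(S)} = \sup_n \norm{\nmat_n(S)}$, settling the boundedness equivalence; for the quotient statement one uses that a bounded map is a quotient map iff it is so on a dense subspace together with the compatible complementation, exactly as in the $\kmat_\infty$ discussion.

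I expect the main obstacle to be the first step: pinning down precisely that the adjoint of $S \otimes \id_{\nmat_n^p}$, transported through the chain of completely isometric identifications $\nmat_n(\fV)^* = (\fV \pptens \nmat_n^p)^* = \fC\fB_p(\fV, \mat_n) = \mat_n(\fV^*)$, equals $(S^*)^{(n)}$ — the bookkeeping of which tensor leg is which, and the fact that the duality $\pptens$-formula pairs $\fV$ against $\fV^*$ and $\nmat_n^p$ against $\mat_n$ in the ``crossed'' way appropriate to $\fC\fB_p$, is where all the care is needed. Once that identification is in hand, everything else is standard Banach-space duality plus the finite-to-infinite limiting argument already sketched in the excerpt.
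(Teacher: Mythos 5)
Your proposal is correct and follows essentially the same route as the paper's proof: the completely isometric identification $\nmat_n(\fV)^*\cong(\fV\hat{\otimes}^p\nmat_n^p)^*\cong\fC\fB_p(\fV,\fB(\ell^p_n))\cong\mat_n(\fV^*)$, under which $\nmat_n(S)^*=S^{*(n)}$, yields (i)$\Leftrightarrow$(ii) and the optimal constant $\pcbnorm{S^*}$, and the passage to (iii) is the same finite-to-infinite argument via Proposition \ref{prop:nucsubinj} and density of $\bigcup_{n}\nmat_n(\fV)$ in $\nmat_\infty(\fV)$. The only difference is that you make explicit the standard Banach-space duality (adjoint isometric $\Leftrightarrow$ metric surjection, using completeness) that the paper leaves implicit.
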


\begin{proof} 
We have for, each $n$,  the dual space
\begin{equation}\label{eq:matndual}
\nmat_n(\fV)^*\cong(\fV\hat{\otimes}^p\nmat_n^p)^*\cong\fC\fB_p(\fV,\fB(\ell^p_n))\cong\mat_n(\fV^*)
\end{equation}
with respect to which we have identifications $\nmat_n(S)^*=S^{*(n)}$.
This gives us the immediate equivalence of (i) and (ii), as well as the minimal value of $C$ in (ii).

Proposition \ref{prop:nucsubinj} shows that each $\nmat_n(\fV)$ may be realised
isometrically as the upper left corner of $\nmat_{\infty}(\fV)$.  Let
$\nmat_{\mathrm{fin}}(\fV)=\bigcup_{n=1}^\infty\nmat_n(\fV)$, which is a dense subspace
of $\nmat_\infty(\fV)$.  Condition (ii)
gives that $\nmat_\infty(S)|_{\nmat_{\mathrm{fin}}(\fV)}$ is bounded by  $C$ %$\pcbnorm{S^*}$
(respectively, is a quotient map),
hence $\nmat_\infty(S)$ is defined and is bounded (respectively, a quotient map), i.e.\
(ii) implies (iii).  That (iii) implies (ii) is obvious.  
\end{proof}

We will say that $S:\fV\to\fW$ is a {\em weakly complete isomorphism} if $S$ is bijective and
both $S^*$ and $(S^{-1})^*$ are completely bounded.
We will further say that $S$ is a {\em weakly complete surjection}
if the induced map $\til{S}:\fV/\ker S\to\fW$ is a weakly complete isomorphism.
The following uses essentially the same proof as \cite[Cor.\ 1.2]{spronk}.  To conduct
that proof in this context, we merely
need to observe that (\ref{eq:matndual}) holds when $n=\infty$, and appeal to the infinite matrix
structures described at the end of the previous section.

\begin{corollary}\label{cor:wcsurj}
{\bf (i)}  $S$ is a weakly complete isomorphism if and only if $\nmat_\infty(S)$ is
an isomorphism.

{\bf (ii)} $S$ is a weakly complete surjection if and only if $\nmat_\infty(S)$ is surjective.
\end{corollary}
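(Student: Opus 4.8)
The plan is to follow the skeleton of the proof of \cite[Cor.\ 1.2]{spronk}, converting every assertion about $\nmat_\infty(S)$ into one about the amplified adjoint $S^{*(\infty)}$ and then invoking elementary Banach-space duality. The one genuinely new input is that the duality (\ref{eq:matndual}) survives at $n=\infty$: appealing to the infinite-matrix constructions of \cite{leeT} recalled at the end of Section \ref{ssec:popsp} and passing to weak* limits of the finite-dimensional identifications, one obtains a completely isometric identification $\nmat_\infty(\fV)^*\cong\mat_\infty(\fV^*)$ under which $\nmat_\infty(S)^*$ is the amplification $S^{*(\infty)}$. I will combine this with three routine facts: first, the dictionary from Section \ref{ssec:popsp} that $\pcbnorm{T}=\norm{T^{(\infty)}}$ for a completely bounded map $T$, whence $S^{*(\infty)}$ is a Banach-space isomorphism iff $S^*$ is a completely bounded bijection with completely bounded inverse, and $S^{*(\infty)}$ is bounded below iff $S^*$ is completely bounded with the $(S^*)^{(n)}$ uniformly bounded below; second, the standard facts that a bounded operator between Banach spaces is an isomorphism exactly when its adjoint is, is surjective exactly when its adjoint is bounded below, and is bijective exactly when its adjoint is bijective; and third, that $S$ is weakly completely bounded as soon as $\nmat_\infty(S)$ is bounded (Lemma \ref{lem:nmatmaps}).

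For (i): if $S$ is a weakly complete isomorphism then $S^*$ and $(S^{-1})^*=(S^*)^{-1}$ are completely bounded, so $\nmat_\infty(S)^*=S^{*(\infty)}$ is a Banach-space isomorphism, hence so is $\nmat_\infty(S)$. Conversely, if $\nmat_\infty(S)$ is an isomorphism then $S^{*(\infty)}=\nmat_\infty(S)^*$ is one too, so $S^*$ is a completely bounded bijection with completely bounded inverse; by Banach duality $S$ itself is then bijective, and $(S^{-1})^*=(S^*)^{-1}$ is completely bounded, so $S$ is a weakly complete isomorphism.

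For (ii): factor $S=\til{S}\circ q$, where $q:\fV\to\fV/\ker S$ is the canonical complete quotient map, so $q^*$ is a complete isometry of $(\fV/\ker S)^*$ onto $(\ker S)^\perp\subseteq\fV^*$, and $\til{S}$ is the induced injection; thus $S^*=q^*\circ\til{S}^*$. If $S$ is a weakly complete surjection then $\til{S}$ is a weakly complete isomorphism, so by the argument in (i) the $(\til{S}^*)^{(n)}$ are uniformly bounded below; composing with the isometric amplifications of $q^*$, the $(S^*)^{(n)}$ are uniformly bounded below, i.e.\ $S^{*(\infty)}=\nmat_\infty(S)^*$ is bounded below, so $\nmat_\infty(S)$ is surjective. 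Conversely, if $\nmat_\infty(S)$ is a (bounded) surjection, then $S^{*(\infty)}=\nmat_\infty(S)^*$ is bounded below; in particular $S^*$ is completely bounded --- so $S$ is weakly completely bounded --- and bounded below, whence $S$ is surjective, $\ran S^*=(\ker S)^\perp$, and $\til{S}$ is bijective. Since $\til{S}^*=(q^*)^{-1}\circ S^*$ with $(q^*)^{-1}:(\ker S)^\perp\to(\fV/\ker S)^*$ a complete isometry, the $(\til{S}^*)^{(n)}$ are uniformly bounded below, so $\til{S}^*$ is a completely bounded bijection with completely bounded inverse; hence $\til{S}$ is a weakly complete isomorphism and $S$ is a weakly complete surjection. (As in Lemma \ref{lem:nmatmaps}(iii), ``$\nmat_\infty(S)$ is an isomorphism'' and ``$\nmat_\infty(S)$ is surjective'' are understood to include that $\nmat_\infty(S)$ is defined and bounded.)

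The main obstacle is the very first step: verifying that (\ref{eq:matndual}) really does extend to $n=\infty$ with the identification $\nmat_\infty(S)^*=S^{*(\infty)}$. This requires correctly identifying the dual of $\nmat_\infty^p$ and being careful about which infinite dual matrix space ($\mat_\infty$ rather than $\kmat_\infty$) appears, with an appeal to the precise statements of \cite{leeT}. Everything after that is a mechanical combination of Banach-space duality with the complete-boundedness/amplification dictionary of Section \ref{ssec:popsp}; the completely contractively complemented embedding of $\fV$ (and of each $\nmat_n(\fV)$) as a corner of $\nmat_\infty(\fV)$ from Proposition \ref{prop:nucsubinj} gives an alternative, equally short route wherever bijectivity or surjectivity of $S$ needs to be read off from that of $\nmat_\infty(S)$.
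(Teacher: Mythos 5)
Your argument is correct and is essentially the paper's own proof: the authors likewise reduce everything to the observation that the duality (\ref{eq:matndual}) persists at $n=\infty$ via the infinite-matrix structures of \cite{leeT}, so that $\nmat_\infty(S)^*$ is identified with $S^{*(\infty)}$, and then run the same Banach-space duality bookkeeping (following \cite[Cor.\ 1.2]{spronk}) that you carry out explicitly. Your fleshing out of the isomorphism and quotient/bounded-below dictionary, including the factorisation $S=\til{S}\circ q$ in (ii), matches the intended argument, so no further changes are needed.
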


Weakly complete quotient maps play a very satisfying role with the $p$-operator
project tensor product.

\begin{proposition}\label{prop:wcqtp}
Suppose $S:\fW\to\fX$ is a weakly complete quotient map of $p$-operator spaces.
Then for any $p$-operator space $\fV$ the map $\id\otimes S$ extends
to a weakly complete quotient map from $\fV\hat{\otimes}^p\fW$ onto
$\fV\hat{\otimes}^p\fX$, which we again denote $\id\otimes S$.
If $S$ is a weakly complete isometry, then so too is $\id\otimes S$.
\end{proposition}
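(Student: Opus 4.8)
The plan is to dualize and reduce everything to the already-established duality formula for the $p$-operator projective tensor product, namely $(\fV\hat{\otimes}^p\fW)^*\cong\fC\fB_p(\fV,\fW^*)$. First I would observe that since $S:\fW\to\fX$ is a weakly complete quotient map, its adjoint $S^*:\fX^*\to\fW^*$ is a complete isometry. The hypothesis that $\id\otimes S:\fV\otimes_{\wedge p}\fW\to\fV\otimes_{\wedge p}\fX$ is bounded (hence extends to the completions) is routine from the functoriality of $\hat{\otimes}^p$; the content is the quotient/isometry claim, and that is most naturally detected on adjoints.

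The key computation is to identify $(\id\otimes S)^*$ under the duality. Using $(\fV\hat{\otimes}^p\fX)^*\cong\fC\fB_p(\fV,\fX^*)$ and $(\fV\hat{\otimes}^p\fW)^*\cong\fC\fB_p(\fV,\fW^*)$, I would check that $(\id\otimes S)^*$ corresponds to the map $\fC\fB_p(\fV,\fX^*)\to\fC\fB_p(\fV,\fW^*)$ given by post-composition with $S^*$, i.e.\ $T\mapsto S^*\circ T$. This is a naturality statement about the duality isomorphism that should follow from tracking the pairings on elementary tensors $v\otimes w$. Once this is in hand, the conclusion is immediate: if $S^*$ is a complete isometry, then post-composition with $S^*$ is a complete isometry on $\fC\fB_p(\fV,-)$ (the matrix norms on $\fC\fB_p(\fV,\fW^*)$ are inherited via $\mat_n(\fC\fB_p(\fV,\fW^*))\cong\fC\fB_p(\fV,\mat_n(\fW^*))$, and complete isometries are preserved under the $\mat_n$ functor), so $(\id\otimes S)^*$ is a complete isometry, meaning $\id\otimes S$ is a weakly complete quotient map. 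For the ``moreover'' part, if in addition $S$ is injective, one must verify $\id\otimes S$ is injective on the completed tensor product; here I would use projectivity of $\hat{\otimes}^p$ together with the already-known fact (Daws, \cite[Lem.\ 4.6]{daws}) relating complete quotients and weak quotients, or argue directly that a weakly complete isometry $S$ has $\nmat_\infty(S)$ isometric (Lemma \ref{lem:nmatmaps}) and transfer this through the identification $\nmat_\infty(\fV\hat{\otimes}^p\fW)\cong\fV\hat{\otimes}^p\nmat_\infty^p\hat{\otimes}^p\fW$.

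An alternative, more hands-on route avoids duality and works directly with the dual matrix construction of Lemma \ref{lem:nmatmaps}. By that lemma, $\id\otimes S$ is a weakly complete quotient map if and only if $\nmat_n(\id\otimes S)$ is a quotient map for every $n$. Now $\nmat_n(\id\otimes S)$ is the amplification $(\id\otimes S)\otimes\id_{\nmat_n^p}$ on $\fV\hat{\otimes}^p\fW\hat{\otimes}^p\nmat_n^p$, which by commutativity and associativity of $\hat{\otimes}^p$ is the same as $\id_{\fV\hat{\otimes}^p\nmat_n^p}\otimes S$ acting on $(\fV\hat{\otimes}^p\nmat_n^p)\hat{\otimes}^p\fW$. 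So it suffices to prove the case $n=1$, i.e.\ that $\id\otimes S$ itself is a quotient map whenever $S$ is, and then reapply with $\fV$ replaced by $\fV\hat{\otimes}^p\nmat_n^p$. The case $n=1$ is precisely the projectivity property of $\hat{\otimes}^p$ recorded in the preliminaries: writing $\fX\cong\fW/\ker S$ completely isometrically (since $S$ is a complete quotient onto its image — wait, $S$ is only a \emph{weakly} complete quotient, so this step needs care), we would like to invoke projectivity directly. This is where the approaches diverge, and it signals the main obstacle below.

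The main obstacle is exactly the gap, emphasized in Section \ref{ssec:wcbmaps}, between complete quotient maps and weakly complete quotient maps: projectivity of $\hat{\otimes}^p$ is stated for \emph{complete} quotients, but our hypothesis only gives a \emph{weakly} complete quotient. So I expect the duality approach to be the robust one: it never needs to pass a quotient of $\fW$ through $S$, only to manipulate adjoints and the duality isomorphism $(\fV\hat{\otimes}^p\fW)^*\cong\fC\fB_p(\fV,\fW^*)$, where the quotient-versus-weak-quotient distinction dissolves because everything has been moved to the adjoint side. The price is that I must carefully justify that the duality isomorphism is natural in the second variable — that the diagram relating $(\id\otimes S)^*$ to post-composition by $S^*$ genuinely commutes — and that this naturality persists at the level of all matrix amplifications, so that ``complete isometry'' really is preserved. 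I would also double-check that $\id\otimes S$ being bounded (so that its adjoint exists as a bona fide map $(\fV\hat{\otimes}^p\fX)^*\to(\fV\hat{\otimes}^p\fW)^*$) is not circular: this follows because any weakly completely bounded map is in particular bounded, and $\hat{\otimes}^p$ is functorial for bounded — indeed for completely bounded, but bounded suffices here — maps on the algebraic tensor product, which then extends by density.
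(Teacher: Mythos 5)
Your main argument is exactly the paper's proof: under the duality $(\fV\hat{\otimes}^p\fW)^*\cong\fC\fB_p(\fV,\fW^*)$ the adjoint of $\id\otimes S$ is identified with $T\mapsto S^*\circ T$, which is a complete isometry because $S^*$ is, so $\id\otimes S$ extends to a weakly complete quotient map. For the isometry statement the paper does what your second suggested route does --- it checks that each $\nmat_n(\id\otimes S)$ is an isometry on $\nmat_n(\fV\otimes_{\wedge p}\fW)$ and extends by density --- so you should drop the first route via projectivity of $\hat{\otimes}^p$, which, as you yourself note, is only available for complete (not weakly complete) quotient maps.
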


\begin{proof}
Under the usual dual identification, the map from $\fC\fB_p(\fV,\fX^*)$ to
$\fC\fB_p(\fV,\fW^*)$ given by $T\mapsto S^*\circ T$ is the adjoint of
$\id\otimes S:\fV\otimes_{\wedge p}\fW\to\fV\otimes_{\wedge p}\fX$.  
By assumption, $S^*$ is a complete isometry, hence so is
$T\mapsto S^*\circ T$.  It follows that $\id\otimes S$ extends to a weak
complete quotient map. If $S$ is injective, then each $\nmat_n(\id\otimes S)$ is
an isometry on $\nmat_n(\fV\otimes_{\wedge p}\fW)$, and hence extends
to an isometry on the completion.
\end{proof}

Given a family of $p$-operator spaces $\{\fV_i\}_{i\in I}$ we put a $p$-operator space
structure on the product by the identifications
$\mat_n\left(\ell^\infty\text{-}\bigoplus_{i\in I}\fV_i\right)\cong\ell^\infty\text{-}
\mat_n(\fV_i)$.  It is readily verified that $(D)$ and $(M_p)$ are satisfied.

The direct sum structure seems more subtle.  We use an approach suggested in \cite[\S 2.6]{pisierB}.
We consider for $[v_{kl}]=([v_{i,kl}])_{i\in I}$ in $\mat_n(\ell^1\text{-}\bigoplus_{i\in I}\fV_i)$
the norm 
\[
\norm{[v_{ij}]}_n=\sup\left\{\norm{\sum_{i\in I}[S_iv_{i,kl}]}_{\mat_n(\fW)}
:\begin{matrix} S_i\in\fC\fB_p(\fV_i,\fW),\,\pcbnorm{S_i}\leq 1\text{ for }i\in I \\ 
\text{ where }\fW\text{ is a }p\text{-operator space}
\end{matrix}\right\}.
\]
We observe that $\norm{[v_{ij}]}_n\leq\sum_{k,l=1}^n\sum_{i\in I}\norm{v_{i,kl}}$
and hence is finite.  Since both $(D)$ and $(M_p)$ hold in $\fW$, 
we see that this family of norms is a $p$-operator space structure
on $\ell^1\text{-}\bigoplus_{i\in I}\fV_i$.
Moreover, it is trivial to see that this space satisfies the categorical
properties of a direct sum, i.e.\ for complete contractions $S_i:\fV_1\to\fW$,
$(v_i)_{i\in I}\mapsto\sum_{i\in I}S_iv_i$ is a complete contraction.
In particular, we obtain an isometic identification
\begin{equation}\label{eq:directsum}
\fC\fB_p\left(\ell^1\text{-}\bigoplus_{i\in I}\fV_i,\fW\right)\cong
\ell^\infty\text{-}\bigoplus_{i\in I}\fC\fB_p(\fV_i,\fW).
\end{equation}
By taking $n\times n$ matricies of both sides for each $n$, we see that this is a completely isometric
identification. In particular, 
$\left(\ell^1\text{-}\bigoplus_{i\in I}\fV_i\right)^*\cong \ell^\infty\text{-}\bigoplus_{i\in I}\fV_i^*$, 
completely isometrically.  (We are indebted to M.~Daws for pointing us to this approach.)

An alternative approach is to embed
$\ell^1\text{-}\bigoplus_{i\in I}\fV_i\hookrightarrow\left(\ell^\infty\text{-}\bigoplus_{i\in I}\fV_i^*\right)^*$,
and then assign the dual operator space structure.   
If each $\fV_i$ acts on $L^p$, then each imbedding 
$\fV_i\hookrightarrow\left(\ell^1\text{-}\bigoplus_{i\in I}\fV_i\right)_D$
is a complete isometry, by virtue of Proposition \ref{prop:kappainj} (iii).  Indeed, it is clear that
each $(\fV_i)_D$ is, in turn, a complete quotient of $\left(\ell^1\text{-}\bigoplus_{i\in I}\fV_i\right)_D$.
Moreover, under the latter assumptions, $\left(\ell^1\text{-}\bigoplus_{i\in I}\fV_i\right)_D$
satisfies the categorical properties of a direct sum, for the morphisms of complete contractions,
and hence gives the operator space structure on $\ell^1\text{-}\bigoplus_{i\in I}\fV_i$ indicated above.

%We observe for a general family $p$-operator spaces 
%that each embedding $\fV_i\to\left(\ell^1\text{-}\bigoplus_{i\in I}\fV_i\right)_D$ thus factors
%through $(\fV_i)_D$ for each $i$.  From Proposition \ref{prop:kappainj} (iii) we have 
%that each $\id:\fV_i\to(\fV_i)_D$ is always a complete contraction, though it is not known generally
%to be a weakly complete isometry.  Hence, 
%it is not even clear that $\left(\ell^1\text{-}\bigoplus_{i\in I}\fV_i\right)_D$ satisfies
%the categorical properties of a direct sum in this context, even if we restrict ourselves to weakly
%completely bounded maps as morphisms.

We finally observe that $\nmat_n\left(\ell^1\text{-}\bigoplus_{i\in I}\fV_i\right)\cong
\ell^1\text{-}\bigoplus_{i\in I}\nmat_n(\fV_i)$ weakly completely isometrically.
Indeed, the dual spaces are completley isometric by virtue of 
(\ref{eq:matndual}), which gives us (\ref{eq:directsum}) with $\fW=\fB(\ell^p_n)$.

%However, if we wish a direct sum to work in the category with weakly complete contractions 
%as morphisms, we can simply define $\nmat_n\left(\ell^1\text{-}\bigoplus_{i\in I}\fV_i\right)$
%to be $\ell^1\text{-}\bigoplus_{i\in I}\nmat_n(\fV_i)$.  In effect, we have placed a ``weak $p$-operator  
%space structure" on the direct sum.  
%We do not know how to devise a $p$-operator
%space structure on $\ell^1\text{-}\bigoplus_{i\in I}\fV_i$ by which
%$\nmat_n\left(\ell^1\text{-}\bigoplus_{i\in I}\fV_i\right)\cong\nmat_n^p\hat{\otimes}^p
%\left(\ell^1\text{-}\bigoplus_{i\in I}\fV_i\right)$, isometrically for each $n$.

\subsection{Completely bounded modules and
$p$-operator Segal algebras}\label{ssec:segalalg}
Let $\fA$ be a Banach algebra which is also a $p$-operator space, and
$\fV$ be a left $\fA$-module which is also a $p$-operator space.
We say that $\fV$ is a {\em (weakly) completely bounded $\fA$-module}
if the module multiplication map $m_\fV:\fA\otimes_{\wedge p}\fV\to\fV$
is a (weakly) completely bounded map, hence extends to a (weakly) completely bounded map
$m_\fV:\fA\hat{\otimes}^p\fV\to\fV$.  In particular, for each $n$, $\nmat_n(m_\fV)
:\nmat_n(\fA\hat{\otimes}^p\fV)\to\nmat_n(\fV)$ is bounded.  Since
$\hat{\otimes}^p$ is a cross-norm, and $\nmat_{mn}(\fA\hat{\otimes}^p\fV)\cong
\nmat_m(\fA)\hat{\otimes}^p\nmat_n(\fV)$ isometrically, we 
use Lemma \ref{lem:nmatmaps} to see that for $[a_{ij}]$ in $\fA$ and
$[v_{i'j'}]$ in $\nmat_n(\fV)$ that
\begin{align}\label{eq:nmatmult}
\norm{[a_{ij}v_{i'j'}]}_{\nmat_{mn}(\fV)}
&=\norm{\nmat_{mn}(m_\fV)[a_{ij}\otimes v_{i'j'}]}_{\nmat_{mn}(\fV)} \notag \\
&\leq\pcbnorm{m_\fV^*}\norm{[a_{ij}\otimes v_{i'j'}]}_{\nmat_{mn}(\fA\hat{\otimes}^p\fV)} \\
&\leq \pcbnorm{m_\fV^*}\norm{[a_{ij}]}_{\nmat_m(\fA)}\norm{[v_{i'j'}]}_{\nmat_n(\fV)}. \notag
\end{align}
Of course, the analagous inequality characterising a completely bounded $\fA$-module is well known:
\begin{equation}\label{eq:matmult}
\norm{[a_{ij}v_{i'j'}]}_{\mat_{nm}(\fV)}\leq\pcbnorm{m_\fV}\norm{[a_{ij}]}_{\mat_n(\fA)}
\norm{v_{i'j'}]}_{\mat_m(\fV)}.
\end{equation}
We say $\fV$ is a {\em (weakly) completely contractive $\fA$-module} provided
$\pcbnorm{m_\fV}\leq 1$ ($\pcbnorm{m_\fV^*}\leq1$).
There is an obvious extension of this to right and bi-modules.
We say that $\fA$ is a {\em (weakly) completely contractive 
Banach algebra} if $\fA$ is a (weakly) completely contractive $\fA$-module over itself.

A first example of a completely contractive Banach algebra is $\fC\fB_p(\fV)=
\fC\fB_p(\fV,\fV)$ where $\fV$ is a fixed $p$-operator space.  Indeed
for $[T_{ij}]$ in $\fC\fB_p(\fV,\mat_n(\fV))$ and $[S_{i'j'}]$ in $\fC\fB_p(\fV,\mat_m(\fV))$
we have $[T_{ij}\circ S_{i'j'}]=[T_{ij}]^{(m)}\circ[S_{i'j'}]$ in $\fC\fB_p(\fV,\mat_{nm}(\fV))$
by which (\ref{eq:matmult}) is clearly satisfied.
Now we embed $\fB(\bl^p(\mu))$ into
$\fC\fB_p(\fB(\bl^p(\mu)))$ by left multiplication operators, realising $\fB(\bl^p(\mu))$
as a closed subalgebra.  It follows that $\fB(\bl^p(\mu))$ is a completely contractive Banach
algebra.  In particular, if $\fB$ is a weak*-closed algebra of $\fB(\bl^p(\mu))$, then its dual space, 
hence its natural predual, is a completely contractive $\fB$-module.

We observe that if $\fA$ and $\fB$ are weakly completely contractive Banach algebras, then
$\fA\hat{\otimes}^p\fB$ is also weakly completely contractive.  Indeed, we let
$\Sigma:\fA\hat{\otimes}^p\fB\to\fB\hat{\otimes}^p\fA$ denote the swap map, and then
we appeal to Proposition \ref{prop:wcqtp} to see that
\[
m_{\fA\hat{\otimes}^p\fB}=(m_\fA\otimes m_\fB)\circ (\id_\fA\otimes\Sigma\otimes\id_\fB)
\]
is weakly completely contractive.

Now suppose $\fI$ is a left ideal in a (weakly) completely contractive Banach algebra
$\fA$, equipped with an operator structure by which

$\bullet$ $\fI=\mat_1(\fI)$ is a Banach space,

$\bullet$ the identity injection $\fI\hookrightarrow\fA$ is (weakly) completely bounded
[contractive], and

$\bullet$ $\fI$ is a (weakly) completely bounded [contractive] $\fA$-module.

\noindent
Then we call $\fI$ a {\em (weakly) [contractive] $p$-operator Segal ideal} in $\fA$.  Given two
(weakly) $p$-operator Segal ideals in $\fA$, we write $\fI\leq\fJ$ (respectively,
$\fI\leq_w\fJ$) provided that $\fI\subset\fJ$ and the inclusion map
$\fI\hookrightarrow\fJ$ is (weakly) completely bounded.  Hence $\fI$ is a
{\em (weakly) $p$-operator Segal ideal} in $\fJ$.  A (weakly) [contractive]
$p$-operator Segal ideal $\fI=\seg\fA$ is called a {\em (weakly) [contractive] $p$-operator Segal 
algebra} in $\fA$ provided that 

$\bullet$ $\seg\fA$ is dense in $\fA$.

Suppose $\fI$ and $\fJ$ are two (weakly) $p$-operator Segal ideals of a completely 
contractive Banach algebra $\fA$, such that $\fI\cap\fJ\not=\{0\}$.  We assign
a $p$-operator space structure on $\fI\cap\fJ$ via the diagonal embedding
into the direct product space, i.e.\
$u\mapsto(u,u):\fI\cap\fJ\hookrightarrow\fI\oplus_{\ell_\infty}\fJ$.  It is straightforward
to check that $\fI\cap\fJ$ is a (weakly) Segal ideal, in this case.

\subsection{Some applications of weakly completely bounded maps}
In  \cite[Prop.\ 5.3]{daws}, the isometric identification
\begin{equation}\label{eq:nucoptp}
\nmat^p(\mu)\hat{\otimes}^p\nmat^p(\nu)\cong\nmat^p(\mu\times\nu)
\end{equation}
is given.  
This identification is one of the key points of \cite{daws}.  However, it is unkown to the authors
if it is a completely isometric identification; see further discussion in \cite[p.\ 938]{anleeruan}.

\begin{proposition}\label{prop:nucoptp}
The identification (\ref{eq:nucoptp}) is a weakly complete isometry.
\end{proposition}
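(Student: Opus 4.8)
The plan is to combine the fact, due to Daws, that (\ref{eq:nucoptp}) is an isometric isomorphism of Banach spaces with Lemma \ref{lem:nmatmaps}. Write $\Phi_{\mu,\nu}\colon\nmat^p(\mu)\hat{\otimes}^p\nmat^p(\nu)\to\nmat^p(\mu\times\nu)$ for the canonical map realising (\ref{eq:nucoptp}). Since $\Phi_{\mu,\nu}$ is injective, to see that it is a weakly complete isometry it suffices to see that it is a weakly complete quotient map, i.e.\ (Lemma \ref{lem:nmatmaps}, (i)$\Leftrightarrow$(ii)) that $\nmat_n(\Phi_{\mu,\nu})$ is a quotient map for every $n$.

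First I would record the elementary observations that $\nmat^p_n=\fN(\ell^p_n)=\nmat^p(\mathrm{c}_n)$, where $\mathrm{c}_n$ denotes counting measure on $\{1,\dots,n\}$, and that $\bl^p(\mu\times\mathrm{c}_n)\cong\ell^p_n\otimes^p\bl^p(\mu)$; thus the finite-matrix constructions of Section \ref{ssec:wcbmaps} are instances of the nuclear-operator constructions. Using associativity and commutativity of $\hat{\otimes}^p$ together with the isometric identification (\ref{eq:nucoptp}), applied with one factor equal to $\nmat^p(\mathrm{c}_n)$, one obtains isometric identifications
\[
\nmat_n\bigl(\nmat^p(\mu)\hat{\otimes}^p\nmat^p(\nu)\bigr)\cong\nmat^p(\mu)\hat{\otimes}^p\nmat^p(\nu\times\mathrm{c}_n),
\qquad
\nmat_n\bigl(\nmat^p(\mu\times\nu)\bigr)\cong\nmat^p\bigl(\mu\times(\nu\times\mathrm{c}_n)\bigr),
\]
under which the amplification $\nmat_n(\Phi_{\mu,\nu})=\Phi_{\mu,\nu}\otimes\id_{\nmat^p_n}$ corresponds to $\Phi_{\mu,\,\nu\times\mathrm{c}_n}$. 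The latter map is an isometric isomorphism by (\ref{eq:nucoptp}), hence in particular a quotient map; therefore each $\nmat_n(\Phi_{\mu,\nu})$ is a quotient map, and Lemma \ref{lem:nmatmaps} then yields that $\Phi_{\mu,\nu}^*$ is a complete isometry, which is the assertion.

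The point requiring care --- and the only real obstacle --- is the naturality claim that, under the chain of isometric identifications just described, $\nmat_n(\Phi_{\mu,\nu})$ is genuinely carried to $\Phi_{\mu,\,\nu\times\mathrm{c}_n}$. This is a diagram chase that can be performed on elementary tensors in the dense algebraic tensor products $\bl^{p'}(\cdot)\otimes\bl^p(\cdot)$, on which all the maps involved --- the defining bilinear maps of (\ref{eq:nucoptp}), the associativity map, and the flip --- are the evident ones; the only mild bookkeeping is tracking the reordering of tensor legs introduced by commutativity. One should also note that (\ref{eq:nucoptp}) is being invoked for general measure spaces, in particular for the $\mathrm{c}_n$, which is legitimate since Daws's argument imposes no restriction on the measure spaces. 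No appeal to weak*-continuity, to Proposition \ref{prop:dualspace}, or to the injectivity of any $\kappa_\fV$ is needed.
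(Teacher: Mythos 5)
Your overall strategy (reduce to showing the amplifications $\nmat_n(\Phi_{\mu,\nu})$ are isometric, then invoke Lemma \ref{lem:nmatmaps}) is the same as the paper's, but there is a genuine gap at the step you dismiss as mere bookkeeping. You claim that ``associativity and commutativity of $\hat{\otimes}^p$ together with the isometric identification (\ref{eq:nucoptp}), applied with one factor equal to $\nmat^p(\mathrm{c}_n)$'' yield the isometric identification $\nmat_n\bigl(\nmat^p(\mu)\hat{\otimes}^p\nmat^p(\nu)\bigr)\cong\nmat^p(\mu)\hat{\otimes}^p\nmat^p(\nu\times\mathrm{c}_n)$. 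Unwinding this, what you actually need is that $\id_{\nmat^p(\mu)}\otimes\Phi_{\nu,\mathrm{c}_n}$ is isometric on $\nmat^p(\mu)\hat{\otimes}^p\bigl(\nmat^p(\nu)\hat{\otimes}^p\nmat^p_n\bigr)$. But (\ref{eq:nucoptp}) is, a priori, only a Banach-space isometry, and the norm on $\fV\hat{\otimes}^p\fW$ depends on the full matricial structure of both factors, not just their Banach norms; a merely isometric isomorphism cannot in general be tensored with an identity and remain isometric for $\hat{\otimes}^p$. (If it could, the whole proposition --- and the open question of whether (\ref{eq:nucoptp}) is completely isometric --- would be nearly vacuous.) What licenses this tensoring is Proposition \ref{prop:wcqtp}, and that requires knowing beforehand that $\Phi_{\nu,\mathrm{c}_n}$ is a \emph{weakly complete} isometry, not just an isometry. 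So the real obstacle is not the naturality diagram chase you flag, but the norm behaviour of the tensored map.

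This missing ingredient is exactly the first step of the paper's proof, which you omit: when one factor is $\nmat^p_n=\nmat^p(\gam_n)$ one computes the dual space directly,
\[
\bigl(\nmat^p(\sig)\hat{\otimes}^p\nmat^p_n\bigr)^*\cong\fC\fB_p\bigl(\nmat^p(\sig),\fB(\ell^p_n)\bigr)
\cong\mat_n\bigl(\fB(\bl^p(\sig))\bigr)\cong\fB\bigl(\bl^p(\sig\times\gam_n)\bigr),
\]
which shows that the identification $\nmat^p(\sig)\hat{\otimes}^p\nmat^p_n\cong\nmat^p(\sig\times\gam_n)$ is a weakly complete isometry. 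Only then can Proposition \ref{prop:wcqtp} be applied to tensor this map with $\id_{\nmat^p(\mu)}$, after which Daws's isometric (\ref{eq:nucoptp}) for the pair $(\mu\times\gam_n,\nu)$ (or $(\mu\times\nu,\gam_n)$), together with reordering of the product measure, completes the chain of isometries you wrote down. If you insert this dual-space computation and the appeal to Proposition \ref{prop:wcqtp}, your argument becomes essentially the paper's proof; as written, the central isometric identification is unjustified.
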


\begin{proof}
Let us first assume that $\bl^p(\nu)=\ell^p_n=\bl^p(\gam_n)$,
where $\gam_n$ is the $n$-point counting measure.  Then
\begin{align*}
(\nmat^p(\mu)\hat{\otimes}^p\nmat_n^p)^*&\cong\fC\fB_p(\nmat^p(\mu),\fB(\ell^p_n))\cong
\mat_n(\nmat^p(\mu)^*) \\
&\cong\mat_n(\fB(\bl^p(\mu)))\cong\fB(\bl^p(\mu)\otimes^p\ell^p_n)\cong\fB(\bl^p(\mu\times\gam_n)).
\end{align*}
Hence the isomorphism $\nmat^p(\mu)\hat{\otimes}^p\nmat_n^p\cong\nmat^p(\mu\times\gam_n)$
of (\ref{eq:nucoptp}) is a weakly complete isometry.  By
Proposition \ref{prop:wcqtp}, the calculation above, and 
the isometric identification (\ref{eq:nucoptp}), we establish isometric
identifications
\begin{align*}
\nmat^p_n\hat{\otimes}^p\nmat^p(\mu)\hat{\otimes}^p\nmat^p(\nu)
&\cong \nmat^p(\mu\times\gam_n)\hat{\otimes}^p\nmat^p(\nu) \\
&\cong\nmat^p(\mu\times\gam_n\times\nu)\cong\nmat^p(\mu\times\nu\times \gam_n)
\cong \nmat^p_n\hat{\otimes}^p\nmat^p(\mu\times\nu).
\end{align*}
In other words, $\nmat_n(\nmat^p(\mu)\hat{\otimes}^p\nmat^p(\nu))
\cong \nmat_n(\nmat^p(\mu\times\nu))$ isometrically for each $n$.
Hence by Lemma \ref{lem:nmatmaps}, as it applies to isometries,
the identification (\ref{eq:nucoptp}) is one of a weakly complete isometry.
\end{proof}

We mildly extend some notation of \cite{daws}.  If $\fV\subset\fB(\bl^p(\mu))$ and
$\fW\subset\fB(\bl^p(\nu))$ are weak*-closed subspaces, then
$\fV\bar{\otimes}\fW$ is the weak*-closure of $\fV\otimes\fW$ in
$\fB(\bl^p(\mu)\otimes^p\bl^p(\nu))\cong\fB(\bl^p(\mu\times\nu))$.
Of course, the weak*-topology on $\fB(\bl^p(\mu))$ is given by the
identification $\fB(\bl^p(\mu))\cong\nmat^p(\mu)^*$.

\begin{proposition}\label{prop:bwtensb}
We have that $\fB(\bl^p(\mu))\bar{\otimes}\fB(\bl^p(\nu))=\fB(\bl^p(\mu\times\nu))$.
Moreover, if $\fV_0\subset\fB(\bl^p(\mu))$ and $\fW_0\subset\fB(\bl^p(\nu))$ are
subspaces with respective weak*-closures $\fV$ and $\fW$, then
$\fV_0\otimes\fW_0$ is weak*-dense in $\fV\bar{\otimes}\fW$.
\end{proposition}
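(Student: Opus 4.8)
The plan is to treat both assertions as weak*-density statements and reduce them --- via the bipolar theorem and the weak*-continuity of ``slotting in a fixed operator'' --- to concrete computations in the predual $\nmat^p(\mu\times\nu)$. For the first assertion, $\fB(\bl^p(\mu))\bar{\otimes}\fB(\bl^p(\nu))$ is by definition the weak*-closure of $\fB(\bl^p(\mu))\otimes\fB(\bl^p(\nu))$ in $\fB(\bl^p(\mu\times\nu))\cong\nmat^p(\mu\times\nu)^*$, so by the bipolar theorem it suffices to show that the pre-annihilator of $\fB(\bl^p(\mu))\otimes\fB(\bl^p(\nu))$ in $\nmat^p(\mu\times\nu)$ is $\{0\}$. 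Write such a $\phi$ as $\sum_k h_k\otimes g_k$ with $h_k\in\bl^{p'}(\mu\times\nu)$, $g_k\in\bl^p(\mu\times\nu)$ and $\sum_k\norm{h_k}\norm{g_k}<\infty$. Testing $\phi$ against rank-one operators $S=\xi'\otimes\xi$ on $\bl^p(\mu)$ and $T=\eta'\otimes\eta$ on $\bl^p(\nu)$ --- for which $S\otimes T$ is the rank-one operator $\zeta\mapsto\langle\xi'\otimes\eta',\zeta\rangle\,\xi\otimes\eta$ on $\bl^p(\mu)\otimes^p\bl^p(\nu)=\bl^p(\mu\times\nu)$ --- yields $\sum_k\langle\xi'\otimes\eta',g_k\rangle\langle h_k,\xi\otimes\eta\rangle=0$. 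Holding $\xi',\eta'$ fixed and letting $\xi\otimes\eta$ range over the dense subspace $\bl^p(\mu)\otimes\bl^p(\nu)$ of $\bl^p(\mu\times\nu)$ forces $\sum_k\langle\xi'\otimes\eta',g_k\rangle h_k=0$; i.e.\ the bounded operator $\zeta'\mapsto\sum_k\langle\zeta',g_k\rangle h_k$ on $\bl^{p'}(\mu\times\nu)$ --- the adjoint of the nuclear operator represented by $\phi$ --- vanishes on the dense subspace $\bl^{p'}(\mu)\otimes\bl^{p'}(\nu)$, hence is zero. Thus the nuclear operator represented by $\phi$ is zero, and since $\bl^{p'}(\mu\times\nu)$ has the approximation property this forces $\phi=0$, proving the first assertion.

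For the ``moreover'', $\fV\bar{\otimes}\fW$ is the weak*-closure of the algebraic tensor product $\fV\otimes\fW$, and the inclusion $\overline{\fV_0\otimes\fW_0}^{\,w*}\subseteq\fV\bar{\otimes}\fW$ is immediate from $\fV_0\subseteq\fV$, $\fW_0\subseteq\fW$. The crux is: for each fixed $w\in\fB(\bl^p(\nu))$ the map $\rho_w\colon R\mapsto R\otimes w$, $\fB(\bl^p(\mu))\to\fB(\bl^p(\mu\times\nu))$, is weak*-to-weak* continuous, and symmetrically for the other leg. (Note one cannot simply test $v_\alpha\otimes w$ against a dense subset of the predual, since a weak*-convergent net need not be bounded; the adjoint formulation avoids this.) Granting the continuity, $\fV=\overline{\fV_0}^{\,w*}$ gives $\fV\otimes w=\rho_w\big(\overline{\fV_0}^{\,w*}\big)\subseteq\overline{\rho_w(\fV_0)}^{\,w*}=\overline{\fV_0\otimes w}^{\,w*}\subseteq\overline{\fV_0\otimes\fW}^{\,w*}$ for every $w\in\fW$; as the right-hand side is a weak*-closed subspace, $\fV\otimes\fW\subseteq\overline{\fV_0\otimes\fW}^{\,w*}$. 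The symmetric argument with each fixed $v_0\in\fV_0$ gives $\fV_0\otimes\fW\subseteq\overline{\fV_0\otimes\fW_0}^{\,w*}$, whence $\overline{\fV_0\otimes\fW}^{\,w*}\subseteq\overline{\fV_0\otimes\fW_0}^{\,w*}$. Chaining these and taking weak*-closures gives $\fV\bar{\otimes}\fW\subseteq\overline{\fV_0\otimes\fW_0}^{\,w*}$, hence equality.

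It remains to establish the continuity of $\rho_w$. Via the isometric identification (\ref{eq:nucoptp}), regard $\fB(\bl^p(\mu\times\nu))$ as $(\nmat^p(\mu)\pptens\nmat^p(\nu))^*$. Since $w\in\nmat^p(\nu)^*$ is automatically completely bounded with $\pcbnorm{w}=\norm{w}$, the functoriality of $\pptens$ gives a completely bounded map $\Psi_w:=\id_{\nmat^p(\mu)}\otimes w\colon\nmat^p(\mu)\pptens\nmat^p(\nu)\to\nmat^p(\mu)$, acting on elementary tensors by $N\otimes M\mapsto\langle w,M\rangle\,N$. Its adjoint satisfies $\langle\Psi_w^*R,\,N\otimes M\rangle=\langle R,N\rangle\langle w,M\rangle$; on the other hand, since (\ref{eq:nucoptp}) carries $N\otimes M$ to the tensor-product operator $N\otimes M$ on $\bl^p(\mu\times\nu)$, the trace duality gives $\langle R\otimes w,\,N\otimes M\rangle=\operatorname{tr}\big((R\otimes w)(N\otimes M)\big)=\operatorname{tr}(RN)\operatorname{tr}(wM)=\langle R,N\rangle\langle w,M\rangle$. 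As elementary tensors are total in $\nmat^p(\mu)\pptens\nmat^p(\nu)$, we conclude $\Psi_w^*=\rho_w$, so $\rho_w$ is weak*-to-weak* continuous; the argument on the other leg is identical. The one genuinely delicate step is this last identification of $R\mapsto R\otimes w$ as the adjoint of $\id\otimes w$, which forces one to keep careful track of the two identifications in play --- Daws's tensor formula (\ref{eq:nucoptp}) and the trace duality $\nmat^p(\cdot)^*\cong\fB(\bl^p(\cdot))$ --- and their compatibility on elementary tensors; the pre-annihilator computation of the first part, by contrast, is routine once rank-one operators are used.
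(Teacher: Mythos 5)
Your proof is correct, but its first half takes a genuinely different route from the paper's. The paper proves $\fB(\bl^p(\mu))\bar{\otimes}\fB(\bl^p(\nu))=\fB(\bl^p(\mu\times\nu))$ constructively: it introduces the directed family $\Pi_\mu$ of finite partitions $\pi$ into sets of finite positive measure, the associated contractive projections $e_\pi$ with $e_\pi\to I$ strongly, and observes that the bounded net $(e_\pi\otimes e_{\pi'})T(e_\pi\otimes e_{\pi'})$, which lies in $\fB(\bl^p(\mu|\pi))\otimes\fB(\bl^p(\nu|\pi'))$, converges weak* to $T$; this yields more than density, namely that every $T$ is the weak*-limit of a norm-bounded net from the algebraic tensor product, approximated through finite-dimensional blocks. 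Your bipolar computation of the pre-annihilator --- rank-one tensors, density of simple tensors in $\bl^p(\mu\times\nu)$ and $\bl^{p'}(\mu\times\nu)$, and the approximation property --- is shorter and purely duality-theoretic, but gives no norm control on the approximants. (Two small remarks: the approximation property actually needed is that of $\bl^p(\mu\times\nu)$, which makes $\bl^{p'}\otimes^\gam\bl^p\to\fN(\bl^p)$ injective --- equivalently, since the paper defines $\nmat^p$ as the nuclear operators, vanishing of the operator already gives $\phi=0$; and the boundedness $\norm{S\otimes T}\leq\norm{S}\,\norm{T}$ on $\bl^p(\mu\times\nu)$, implicit in viewing $\fV\otimes\fW$ inside $\fB(\bl^p(\mu\times\nu))$, is used throughout.) Your second half is essentially the paper's argument in different clothing: the paper shows that for fixed $T$ the functional $S\mapsto\langle S\otimes T,\ome\rangle$ is a norm limit of weak*-continuous functionals, using the norm-density of $\nmat^p(\mu)\otimes\nmat^p(\nu)$ in $\nmat^p(\mu\times\nu)$ furnished by (\ref{eq:nucoptp}) and \cite[Lem.\ 6.1]{daws}, which is precisely your statement that $R\mapsto R\otimes w$ is the adjoint of $\id\otimes w$ on the predual; note that only boundedness (not complete boundedness) of $\id\otimes w$ is needed, and it can be had directly from $\norm{R\otimes w}\leq\norm{R}\,\norm{w}$ without invoking cb-functoriality of $\pptens$. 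The chaining of the two one-leg continuities to conclude $\fV\bar{\otimes}\fW\subset\overline{\fV_0\otimes\fW_0}^{w*}$ matches the paper's.
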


\begin{proof}
We let $\Pi_\mu$ denote the set of all finite collections $\pi=\{F_1,\dots,F_{|\pi|}\}$ of pairwise
disjoint $\mu$-measurable sets such that $0<\mu(F_j)<\infty$ for each $j$.
 It is straightforward to verify that each operator
$e_\pi$ on $\bl^p(\mu)$ given by
\[
e_\pi\eta=\sum_{j=1}^{|\pi|}\int_{F_j}\eta\,d\mu\cdot\frac{1}{\mu(F_j)}1_{F_j}
\]
is a contractive projection onto $\bl^p(\mu|\pi)=\spn\{\frac{1}{\mu(F_j)}1_{F_j}\}_{j=1}^{|\pi|}$, 
a space which is isometrically isomorphic to $\ell^p_{|\pi|}$.  
Moreover, $e_\pi\fB(\bl^p(\mu))e_\pi=\fB(\bl^p(\mu|\pi))$.
We write $\pi\leq\pi'$ in $\Pi_\mu$, if each set in $\pi$ is the union of sets in $\pi'$;
making $\Pi_\mu$ into a directed set.  Then, $\lim_\pi e_\pi=I$ in the strong operator topology.
Similarly we define $\Pi_\nu$ and the associated projections on $\bl^p(\nu)$.

Thus if $T\in\fB(\bl^p(\mu\times\nu))$, for the product directed set we have
$\lim_{(\pi,\pi')}(e_{\pi}\otimes e_{\pi'})T(e_{\pi}\otimes e_{\pi'})=T$ in the weak operator
topology, and, since the net is bounded, in the weak*-topology as well.  However,
for each $(\pi,\pi')$ we have
\begin{align*}
(e_{\pi}\otimes e_{\pi'})T(e_{\pi}\otimes e_{\pi'})\in&\fB(\bl^p(\mu|\pi)\otimes^p\bl^p(\nu|\pi')) \\
&=\fB(\bl^p(\mu|\pi))\otimes\fB(\bl^p(\nu|\pi'))\subset\fB(\bl^p(\mu))\otimes\fB(\bl^p(\nu)).
\end{align*}
Hence we see that $T\in \fB(\bl^p(\mu))\bar{\otimes}\fB(\bl^p(\nu))$.

We turn now to the subspaces $\fV_0$ and $\fW_0$.  We observe that
if $T\in\fB(\bl^p(\nu))$ and $\ome\in\nmat^p(\mu)\pptens\nmat^p(\nu)$ with
norm limit $\ome=\lim_{n\to\infty}\ome_n$, where each $\ome_n\in\nmat^p(\mu)\otimes\nmat^p(\nu)$,
then $S\mapsto \langle S\otimes T,\ome\rangle=\lim_{n\to\infty}\langle S\otimes T,\ome_n\rangle$
is a norm limit of functionals associated to $\nmat^p(\mu)$, and thus itself such a functional
--- in particular this functional is weak*-continuous.  (See \cite[Lem.\ 6.1]{daws}.)  Thus
$\fV\otimes\fW_0\subset \overline{\fV_0\otimes\fW_0}^{w*}$.   Likewise
$\fV\otimes\fW\subset \overline{\fV\otimes\fW_0}^{w*}$.  Hence
$\fV\otimes\fW\subset\overline{\fV_0\otimes\fW_0}^{w*}$, and thus
$\fV\bar{\otimes}\fW\subset\overline{\fV_0\otimes\fW_0}^{w*}$.  
The converse inclusion is obvious.
\end{proof}

We note that for the predual $\fV_*=\nmat^p(\mu)/\fV_\perp$, with $\fW_*$ defined similarly,
both with either quotient or dual $p$-operator space structures, we have by \cite[Thm.\ 6.3]{daws}
that $(\fV_*\hat{\otimes}^p\fW_*)^*=\fV\bar{\otimes}_F\fW$.  Here $\fV\bar{\otimes}_F\fW$ is
a certain ``Fubini" tensor product, and contains $\fV\bar{\otimes}\fW$.

\begin{corollary}\label{cor:bwtensb1}
If $\fV\subset\fB(\bl^p(\mu))$, $\fW\subset\fB(\bl^p(\nu))$ and $\fX\subset\fB(\bl^p(\rho))$
are weak*-closed subspaces, then 
\[
(\fV\bar{\otimes}\fW)\bar{\otimes}\fX
=\fV\bar{\otimes}(\fW\bar{\otimes}\fX) 
\]
in $\fB(\bl^p(\mu)\otimes^p\bl^p(\nu)\otimes^p\bl^p(\rho))$.
\end{corollary}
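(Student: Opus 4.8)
The plan is to identify each of the two iterated tensor products with the weak*-closure of the algebraic triple tensor product $\fV\otimes\fW\otimes\fX$ inside a single copy of $\fB(\bl^p(\mu)\otimes^p\bl^p(\nu)\otimes^p\bl^p(\rho))$. First I would pin down the ambient space. The identifications
\[
\bl^p(\mu)\otimes^p(\bl^p(\nu)\otimes^p\bl^p(\rho))\cong(\bl^p(\mu)\otimes^p\bl^p(\nu))\otimes^p\bl^p(\rho)\cong\bl^p(\mu\times\nu\times\rho)
\]
are isometric and mutually compatible, so the various parenthesized tensor products all embed in one copy of $\fB(\bl^p(\mu\times\nu\times\rho))$. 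The point that must be recorded is that these identifications are weak*-homeomorphisms: the weak*-topology on $\fB(\bl^p(\sigma))$ is the one induced by the predual $\nmat^p(\sigma)$, and the corresponding identifications $\nmat^p((\mu\times\nu)\times\rho)\cong\nmat^p(\mu\times\nu\times\rho)\cong\nmat^p(\mu\times(\nu\times\rho))$ are isometric, being adjoint to the associativity isometries of the $\bl^p$-tensor products. Consequently the phrase ``weak*-closure'' used below is unambiguous.

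Next I would invoke the second assertion of Proposition \ref{prop:bwtensb} twice. Applying it with the two factor spaces $\fB(\bl^p(\mu\times\nu))$ and $\fB(\bl^p(\rho))$, taking $\fV_0=\fV\otimes\fW$ (whose weak*-closure is $\fV\bar{\otimes}\fW$ by definition) and $\fW_0=\fX$ (already weak*-closed, so equal to its own weak*-closure), one gets that $(\fV\otimes\fW)\otimes\fX$ is weak*-dense in $(\fV\bar{\otimes}\fW)\bar{\otimes}\fX$; since $(\fV\otimes\fW)\otimes\fX=\fV\otimes\fW\otimes\fX$ as a subspace of $\fB(\bl^p(\mu\times\nu\times\rho))$, this yields
\[
(\fV\bar{\otimes}\fW)\bar{\otimes}\fX=\overline{\fV\otimes\fW\otimes\fX}^{w*}.
\]
Applying the proposition the other way, with factor spaces $\fB(\bl^p(\mu))$ and $\fB(\bl^p(\nu\times\rho))$, $\fV_0=\fV$ and $\fW_0=\fW\otimes\fX$, gives identically $\fV\bar{\otimes}(\fW\bar{\otimes}\fX)=\overline{\fV\otimes\fW\otimes\fX}^{w*}$. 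Comparing the two displays finishes the proof.

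The only genuinely delicate step is the one flagged in the first paragraph, namely verifying that the canonical associativity identifications of the three-fold $\bl^p$-tensor product carry the weak*-structures onto one another, i.e., that they are the adjoints of isometric isomorphisms between the respective nuclear-operator preduals $\nmat^p$ of the three groupings; this reduces to associativity (and essential uniqueness) of the product measure together with the functoriality of $\nmat^p$ under such identifications. Once that is in hand, the corollary is an immediate consequence of Proposition \ref{prop:bwtensb}.
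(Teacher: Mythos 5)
Your proposal is correct and follows essentially the same route as the paper: the paper's one-line proof is precisely to regard $\fV\otimes\fW\otimes\fX$ as a weak*-dense subspace of either iterated closure (via Proposition \ref{prop:bwtensb}), which is what you carry out, merely with the associativity/weak*-compatibility of the identifications $\nmat^p(\mu\times\nu\times\rho)$ and the two applications of the proposition written out explicitly.
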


\begin{proof}
We merely consider $\fV\otimes\fW\otimes\fX$ as a weak*-dense subset of 
either of the tensor closures in question.
\end{proof}

Let $G$ be a locally compact group.  As in the introduction, we let $\apg$ denote the 
Fig\`{a}-Talamanca--Herz algebra.  It is well-known to have as dual space the {\em 
$p$-pseudo-measures}
\[
\pmp=\overline{\spn\lam^p_G(G)}^{w*}
\]
where $\lam^p_G:G\to\fB(\bl^p(G))$ is the left regular representation.  We remark
that $\pmpg$ is contained in the {\em $p$-convolvers}
\[
\cvp(G)=\{T\in\fB(\bl^p(G)):T\rho^p_G(s)=\rho^p_G(s)T\text{ for }s\text{ in }G\}
\]
where $\rho^p_G:G\to\fB(\bl^p(G))$ is the right regular representation.

We recall that $\apg$ is a quotient
of $\nmat^p(G)\cong\bl^{p'}(G)\otimes^\gam\bl^p(G)$, via $P_G$, where
\[
P_G\xi\otimes\eta=\langle \xi,\lam^p_G(\cdot)\eta\rangle=\xi\ast \check{\eta}.
\]
We write $\apg_Q$ when we consider the associated quotient structure $\ran P_G$.
Thanks to \cite[Lem.\ 4.6]{daws}, the adjoint
$P_G^*:(\apg_Q)^*\cong\pmpg\to\fB(\bl^p(G))$, which is simply the injection map, is a 
complete isometry.  Hence $\pmpg$ admits completely isometric $p$-operator space
structures as $(\apg_Q)^*$, as it does as a subspace of $\fB(\bl^p(G))$.
%We write $\apg_D$ when we consider $\apg$ with the operator space structure
%inherited from $\apg\widetilde{\subset}\pmpg^*$.  
Thanks to Proposition \ref{prop:kappainj},
$\id:\apg_Q\to\apg_D$ is a complete contraction, in general; and thanks to
Proposition \ref{prop:dualspace} it is a weakly complete isometry.  
It is known to be a complete
isometry only when $G$ is amenable (\cite[Thm.\ 7.1]{daws}).  

The following is a mild augmentation of aspects of \cite[Theo.\ 7.3]{daws}.
We maintain the convention of \cite{daws} of letting $\apg=\apg_D$.

\begin{proposition}\label{prop:tensorprod}
Let $G$ and $H$ be locally compact groups.
The map $u\otimes v\mapsto u\times v:\apg\hat{\otimes}^p\ap(H)
\to \ap(G\times H)$ is a weakly complete quotient map.  It is injective
exactly when $\apg\hat{\otimes}^p\ap(H)$ is semisimple.
\end{proposition}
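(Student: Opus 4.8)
The claim has two parts: that $u\otimes v\mapsto u\times v$ is a weakly complete quotient map $\apg\pptens\ap(H)\to\ap(G\times H)$, and that this map is injective precisely when $\apg\pptens\ap(H)$ is semisimple. By definition of a weakly complete quotient map, for the first part I must show that the adjoint is a complete isometry. The plan is to identify the adjoint explicitly. Since $\apg=\apg_D$, we have $(\apg\pptens\ap(H))^*\cong\fC\fB_p(\apg_D,\pmp(H))$, while $\ap(G\times H)^*=\pmp(G\times H)$. So I would first show the adjoint of $u\otimes v\mapsto u\times v$ is the natural map $\pmp(G\times H)\to\fC\fB_p(\apg_D,\pmp(H))$ coming from the slicing $T\mapsto(u\mapsto (u\otimes\id)T)$, using that $\pmp$ sits inside $\fB(\bl^p)$ completely isometrically (as noted just before the proposition, via $P_G^*$) and that $\lam^p_{G\times H}(s,t)=\lam^p_G(s)\otimes\lam^p_H(t)$.

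\textbf{Main steps.}
First I would reduce to the tensor-closure picture: $\pmp(G\times H)=\pmpg\,\bar\otimes\,\pmp(H)$ sitting in $\fB(\bl^p(G\times H))=\fB(\bl^p(G))\,\bar\otimes\,\fB(\bl^p(H))$ by Proposition \ref{prop:bwtensb}, the latter since $\spn\lam^p_G(G)\otimes\spn\lam^p_H(H)$ is weak*-dense in the span of $\lam^p_{G\times H}(G\times H)$. Second, I would invoke the Fubini-tensor-product identification recorded after Proposition \ref{prop:bwtensb}: with $\apg_Q=(\pmpg)_*$ and $\ap(H)_Q=(\pmp(H))_*$ one has $(\apg_Q\pptens\ap(H)_Q)^*=\pmpg\,\bar\otimes_F\,\pmp(H)\supset\pmpg\,\bar\otimes\,\pmp(H)=\pmp(G\times H)$. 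This makes the adjoint of $u\otimes v\mapsto u\times v$, viewed on $\apg_Q\pptens\ap(H)_Q$, literally the inclusion $\pmp(G\times H)\hookrightarrow\pmpg\,\bar\otimes_F\,\pmp(H)$, which is a complete isometry. Third, I would upgrade from $\apg_Q$ to $\apg_D$: by Proposition \ref{prop:dualspace} (using that $\pmpg$ acts weak* on $L^p$, being weak*-closed in $\fB(\bl^p(G))$) the identity $\apg_Q\to\apg_D$ is a weakly complete isometry, and by Proposition \ref{prop:wcqtp} applied twice, $\id\otimes\id:\apg_Q\pptens\ap(H)_Q\to\apg_D\pptens\ap(H)_D$ is a weakly complete isometry; hence its adjoint is a complete isometry and the two Fubini-product preduals have the same dual up to complete isometry. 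Chaining these identifications shows the adjoint of $u\otimes v\mapsto u\times v:\apg\pptens\ap(H)\to\ap(G\times H)$ is a complete isometry, i.e. the map is a weakly complete quotient map.

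\textbf{Injectivity.}
For the second part: $u\otimes v\mapsto u\times v$ is always a surjection onto a dense subspace with closed range (being a quotient map up to the weak* subtleties), and its kernel is the closed ideal of elements of $\apg\pptens\ap(H)$ that vanish as functions on $G\times H$; equivalently, since $\ap(G\times H)^*=\pmp(G\times H)$ separates points of $\ap(G\times H)$ and the map onto $\ap(G\times H)$ is precisely evaluation against $\pmp(G\times H)\subset(\apg\pptens\ap(H))^*$, the kernel is the annihilator of $\pmp(G\times H)$ inside $\apg\pptens\ap(H)$. Now $\apg\pptens\ap(H)$ is a commutative Banach algebra whose character space I would identify with (a subset of) $G\times H$, so its Jacobson radical is exactly $\bigcap_{(s,t)}\ker(\text{eval at }(s,t))$, which is the same annihilator since the characters are the point evaluations $\lam^p_{G\times H}(s,t)\in\pmp(G\times H)$ and these are weak*-total in $\pmp(G\times H)$. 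Therefore the kernel of $u\otimes v\mapsto u\times v$ coincides with the radical, so the map is injective iff $\apg\pptens\ap(H)$ is semisimple.

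\textbf{Main obstacle.}
The delicate point is the third step — transferring the completely isometric Fubini-product identification from the quotient structures $\apg_Q$, $\ap(H)_Q$ (where Daws's Theorem 6.3 is stated) to the dual structures $\apg_D$, $\ap(H)_D$, without a Wittstock extension theorem. This is exactly where "weakly" is unavoidable: one only gets that $\id:\apg_Q\to\apg_D$ is a \emph{weakly} complete isometry (Proposition \ref{prop:dualspace}), so $\id\otimes\id$ is only a weakly complete isometry of projective tensor products (Proposition \ref{prop:wcqtp}), and consequently the conclusion is a weakly complete — not a complete — quotient map. I expect the bookkeeping of which predual ($\apg_Q\pptens\ap(H)_Q$ versus $\apg_D\pptens\ap(H)_D$) carries the Fubini product, and checking that the natural maps commute, to be the only real work; the semisimplicity equivalence is then a soft argument about commutative Banach algebras once the Gelfand spectrum is pinned down.
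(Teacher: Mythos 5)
Your strategy for the main assertion is different from the paper's: you try to compute the adjoint of $J:u\otimes v\mapsto u\times v$ directly, as the inclusion of $\pmp(G\times H)=\pmpg\bar{\otimes}\pmp(H)$ (this identification, via Proposition \ref{prop:bwtensb}, is fine) into $(\apg_Q\pptens\ap(H)_Q)^*\cong\pmpg\bar{\otimes}_F\pmp(H)$, whereas the paper works one level up, over the commuting square $\nmat^p(G)\pptens\nmat^p(H)\to\nmat^p(G\times H)$, and gets that $J^*$ is a complete isometry from Proposition \ref{prop:nucoptp} together with the fact that $P_{G\times H}^*$ and $(P_G\otimes P_H)^*$ are complete isometries (plus \cite[Prop.\ 4.10]{daws} to see the square commutes). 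The genuine gap in your route is the sentence asserting that the inclusion $\pmp(G\times H)\hookrightarrow\pmpg\bar{\otimes}_F\pmp(H)$ ``is a complete isometry.'' The matrix norms on $\pmp(G\times H)$ relevant here are those inherited from $\fB(\bl^p(G\times H))$ (equivalently its norms as $(\ap(G\times H)_Q)^*$, by \cite[Lem.\ 4.6]{daws}), while the matrix norms that $\pmpg\bar{\otimes}_F\pmp(H)$ carries as the dual of $\apg_Q\pptens\ap(H)_Q$ are a priori different; the identification $(\fV_*\pptens\fW_*)^*=\fV\bar{\otimes}_F\fW$ of \cite[Thm.\ 6.3]{daws} is quoted in the paper with no completely isometric qualifier for the concrete structure, and if it were available in that strength it would already contain, as the special case $\fV=\fB(\bl^p(G))$, $\fW=\fB(\bl^p(H))$, exactly Proposition \ref{prop:nucoptp} --- which the paper has to prove by hand via Lemma \ref{lem:nmatmaps} and the $\nmat_n$-matrix bookkeeping, precisely because the completely isometric status of (\ref{eq:nucoptp}) and its dual formulations is the delicate point. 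So the word ``complete'' in your key step is not a citation but the entire technical content: you must either prove the concrete-structure completely isometric form of the Fubini duality (in effect redoing Proposition \ref{prop:nucoptp}, and a subspace version of it), or route the argument through the $\nmat^p$-level diagram as the paper does. The remaining pieces of your first part (the elementary-tensor-plus-weak*-continuity identification of $J^*$ with the inclusion, and the transfer from the $Q$- to the $D$-structures via Propositions \ref{prop:dualspace} and \ref{prop:wcqtp}) are sound and parallel the paper.

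The semisimplicity equivalence is essentially the paper's argument: one checks that the $p$-projective norm dominates the injective norm and invokes \cite[Thm.\ 2]{tomiyama} to conclude that the spectrum of $\apg\pptens\ap(H)$ is exactly $G\times H$ and that $J$ is the Gelfand map. Note that your hedge ``(a subset of) $G\times H$'' is not enough for the direction ``semisimple $\Rightarrow$ injective'': for that you need every character to vanish on $\ker J$, i.e.\ every character must be a point evaluation $(s,t)$, not merely that the point evaluations are characters. This is exactly what Tomiyama's theorem supplies, so it is a presentational slip rather than a different method, but it needs to be stated and justified precisely.
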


\begin{proof}
We note that Proposition \ref{prop:wcqtp} allows us to use the quotient operator
space structure, instead of the dual one.  Consider the diagram of maps
\[\xymatrix{
\nmat^p(G)\hat{\otimes}^p\nmat^p(H) \ar[r]^{I}\ar[d]_{P_G\otimes P_H} &
\nmat^p(G\times H) \ar[d]^{P_{G\times H}} \\
\apg_Q\hat{\otimes}^p\ap(H)_Q \ar[r]_{J}
& \ap(G\times H)_Q
}\]
where $I$ is the map from (\ref{eq:nucoptp}) and $J(u\otimes v)=u\times v$. 
%Following elementary tensors, $\xi'\otimes\xi\otimes\eta'\otimes\eta$ from
%$\bl^{p'}(G)\otimes\bl^p(G)\otimes\bl^{p'}(H)\otimes\bl^p(H)$, we see that this diagram
%commutes.  
It is easily checked, using elementary tensors $\sum_{i=1}^\infty(\xi_i\otimes\eta_i)\otimes
\sum_{j=1}^\infty(\xi_j'\otimes\eta_j')$ in $\nmat^p(G)\otimes \nmat^p(H)$, that
\[
P_{G\times H}\circ I=J\circ(P_G\otimes P_H)=J\circ(P_G\otimes \id_{\nmat^p(H)})\circ
(\id_{\nmat^p(G)}\otimes P_H)
\]
and hence
\[
\ker P_G\otimes\nmat^p(H),\;\nmat^p(G)\otimes\ker P_H\subset \ker P_{G\times H}\circ I.
\]
Thus by \cite[Prop.\ 4.10]{daws}, $\ker P_{G\times H}\circ I\supset\ker P_G\otimes P_H$, 
the diagram above commutes.  Moreover, $I$ is a weakly complete isometry, while
$P_G\otimes P_H$ and $P_{G\times H}$ are weakly complete quotient maps,
so $J$ is necessarily a weakly complete quotient map.

We observe that $\hat{\otimes}^p$ is a cross norm on $p$-operator spaces
which is easily checked to dominated the the injective norm, i.e.\ for contractive
functionals $f$ in $\fV^*$ and $g$ in $\fW^*$, $f\otimes g$ in $(\fV\pptens\fW)^*$
is contractive. Hence by \cite[Thm.\ 2]{tomiyama}, the spectrum of $\apg\hat{\otimes}^p\ap(H)$
is $G\times H$, and $J$ is the Gelfand map.  Thus $J$ is injective if and only if
$\apg\hat{\otimes}^p\ap(H)$ is semisimple.
\end{proof}

In the proof of \cite[Thm\ 7.3]{daws}, it is shown that $\pmpg\bar{\otimes}_F\pmp(H)
\subset\cvp(G\times H)$.  Thus when $\pmp(G\times H)=\cvp(G\times H)$,
the map $J$, above, is injective.  This happens when $G$ and $H$ are amenable.
In is widely suspected that $\pmp(G)=\cvp(G)$
for any $G$, but no proof nor counterexample is yet known.  The equality is shown to hold
for certain groups related to $\mathrm{SL}_2(\Ree)$ in \cite{cowling}. It is suggested
in \cite{cowling1} that when $G$ is weakly amenable, or even when $G$ posesses
the approximation property, then $\pmp(G)=\cvp(G)$.  %However, no proof is apparent.

%In the case that $G$ is discrete, see \cite[Prop.\ 6.2 \& Prop.\ 6.3]{anleeruan}.

%It seems possible this can be proved when $G$ and $H$ are semisimple Lie
%groups; see \cite{cowling}.   

We observe the following partial improvement on \cite[Theo.\ 7.3]{daws}.

\begin{corollary}
If $G$ is discrete and has the approximation property, then
\linebreak $\apg\hat{\otimes}^p\ap(H)\cong\ap(G\times H)$ weakly completely isometrically.
\end{corollary}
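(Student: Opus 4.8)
The plan is to combine Proposition \ref{prop:tensorprod} with the condition for injectivity of the Gelfand map $J$, and to use the discreteness and approximation property of $G$ to force that injectivity. By Proposition \ref{prop:tensorprod}, the natural map $J: \apg\hat{\otimes}^p\ap(H)\to\ap(G\times H)$ is always a weakly complete quotient map, so the only thing to prove is that it is injective, equivalently (again by Proposition \ref{prop:tensorprod}) that $\apg\hat{\otimes}^p\ap(H)$ is semisimple. As noted in the discussion after that proposition, $J$ is injective whenever $\pmp(G\times H)=\cvp(G\times H)$, since the proof of \cite[Thm.\ 7.3]{daws} gives $\pmpg\bar{\otimes}_F\pmp(H)\subset\cvp(G\times H)$, and the kernel of $J$ is the pre-annihilator of $\pmpg\bar{\otimes}_F\pmp(H)$ inside $\apg\hat{\otimes}^p\ap(H)$ (using the duality $(\fV_*\hat{\otimes}^p\fW_*)^*=\fV\bar{\otimes}_F\fW$ from \cite[Thm.\ 6.3]{daws} recalled in the excerpt).

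So the crux reduces to: \textbf{if $G$ is discrete with the approximation property, then $\pmp(G\times H)=\cvp(G\times H)$ for every locally compact $H$}. First I would observe that $G\times H$ inherits the approximation property: the approximation property passes to direct products (one tensors an approximating net of finitely-supported, uniformly $p$-completely-bounded multipliers on $G$ with the constant net on $H$, or more carefully uses a net converging to $1$ on both factors), so $G\times H$ has the approximation property. Then I would invoke the suggestion recorded in the excerpt from \cite{cowling1}, namely that the approximation property implies $\pmp=\cvp$; if that implication is available as a cited theorem I would simply apply it. If instead one wants a self-contained argument using discreteness of $G$, I would argue directly: for $G$ discrete, $\cvp(G)=\pmpg$ is classical (Herz), and more generally one can use that the approximation property on $G\times H$ gives a net of completely bounded multipliers with finite support converging pointwise to $1$, which by the standard averaging/multiplier argument squeezes any convolver into the weak*-closed span of the translations.

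Having secured $\pmp(G\times H)=\cvp(G\times H)$, I would then run the final assembly: $J$ is an injective weakly complete quotient map, hence by definition a weakly complete isometry, so $\apg\hat{\otimes}^p\ap(H)\cong\ap(G\times H)$ weakly completely isometrically, which is the assertion (recalling the convention $\apg=\apg_D$ and that Proposition \ref{prop:wcqtp} lets us pass freely between the quotient and dual $p$-operator space structures).

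I expect the main obstacle to be the step $\pmp(G\times H)=\cvp(G\times H)$ — more precisely, pinning down exactly which hypothesis on $G$ (and which companion hypothesis, if any, one needs on $H$) makes this go through cleanly. The cleanest route is to have the implication ``approximation property $\Rightarrow$ $\pmp=\cvp$'' as a black box and to check that $G\times H$ has the approximation property; the delicate point there is that the approximation property for $G$ is being combined with an \emph{arbitrary} $H$, so one needs $H$ to contribute nothing more than $\pmp(H)\subseteq\cvp(H)$, which always holds, together with compatibility of the Fubini tensor product with convolvers. If the cited literature only guarantees $\pmp=\cvp$ for $G\times H$ when \emph{both} factors have the approximation property, then the statement as phrased would implicitly need that hypothesis on $H$ as well, or one leans on discreteness of $G$ to get by with less — this is the point I would be most careful about.
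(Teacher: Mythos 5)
Your reduction of the problem to injectivity of the Gelfand map $J$ (equivalently, semisimplicity of $\apg\pptens\ap(H)$) is correct, and your final assembly would be fine if injectivity were secured; but the route you take to injectivity has a genuine gap. You lean on the implication ``approximation property $\Rightarrow\ \pmp=\cvp$'' applied to $G\times H$. That implication is not available: as the paper itself stresses, the equality $\pmp(G)=\cvp(G)$ is open in general, and the statement that weak amenability or the approximation property forces it is only \emph{suggested} in \cite{cowling1} --- it is not a citable theorem here, so it cannot be used as a black box. Moreover, even granting that implication, your intermediate step that $G\times H$ inherits the approximation property from $G$ fails for arbitrary $H$: the Haagerup--Kraus approximation property of a product requires \emph{both} factors to have it, and no hypothesis is placed on $H$ in the statement. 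Your fallback, that for discrete $G$ one has $\cvp(G)=\pmpg$ ``classically,'' is also not available for $p\neq 2$ (this is precisely part of the open convolver problem), and in any case what your argument needs is the equality for $G\times H$, which is not discrete. So the crux you yourself flagged as the delicate point is exactly where the argument breaks down.

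The paper avoids $\pmp$ versus $\cvp$ altogether and attacks semisimplicity directly on the Banach-algebra side: by \cite[Thm.\ 5.2 \& Prop.\ 6.2]{anleeruan}, discreteness of $G$ together with its approximation property gives that $\apg$ has the $p$-operator approximation property; since $\apg$ and $\ap(H)$ are regular and Tauberian commutative Banach algebras, a suitably modified proof of \cite[Theo.\ 5]{tomiyama} then shows that $\apg\pptens\ap(H)$ is semisimple. Injectivity of $J$ follows from Proposition \ref{prop:tensorprod}, and since $J$ is already a weakly complete quotient map, it is a weakly complete isometry. If you want to salvage your write-up, replace the $\pmp(G\times H)=\cvp(G\times H)$ step by this Tomiyama-type semisimplicity argument; the rest of your reasoning (including the passage between quotient and dual structures via Proposition \ref{prop:wcqtp}) can stand.
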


\begin{proof}
The results \cite[Thm.\ 5.2 \& Prop.\ 6.2]{anleeruan} tell us that $\apg$ has the $p$-operator
approximation property, in this case.  Since both of $\apg$ and $\ap(H)$ are regular and 
Tauberian, the assumptions of \cite[Theo.\ 5]{tomiyama} are satisfied, and its proof can be
modified accordingly to show that $\apg\hat{\otimes}^p\ap(H)$ is semisimple.
\end{proof}

\section{Construction of the $p$-Feichtinger Segal algebra}

\subsection{A special class of ideals}\label{ssec:specialideals}
A special class of ideals, which was defined for $p=2$ in \cite[\S 3.3]{spronk},
plays an even more important role for $\apg$ when $p\not=2$.  Thus we will discuss this
class before defining the $p$-Feichtinger Segal algebra $\sopg$.

Fix a non-null closed subset $K$ of $G$.  We let $1_K$ in $\bl^\infty(G)$
denote the indicator function of $K$, and $\bl^p(K)=1_K\bl^p(G)$.  Similarly we denote
$\bl^{p'}(K)$ and hence $\nmat^p(K)$.  Using two iterations of Proposition \ref{prop:nucsubinj},
we may identify $\nmat^p(K)$ completely isometrically as a subspace of $\nmat^p(G)$.
Let $P_K=P_G|_{\nmat^p(K)}$ and 
\[
\fM_p(K)=\ran P_K
\]
which is obviously a suspace of $\apg=\ran P_G$, though not necessarily closed.
We let $\fM_p(K)_Q$ denote this space with the quotient $p$-operator space structure.

We let $\fV_p(K)=(\ker P_K)^\perp\subset\fB(\bl^p(K))$, which is the dual space of $\fM_p(K)$.
By Proposition \ref{prop:dualspace}, $\fM_p(K)_Q=\fM_p(K)_D$ weakly completely isometrically.
We will generally take the dual structure as the default $p$-operator space structure.
We let $\lam_p^K=1_K\lam_p^G(\cdot)1_K:G\to\fB(\bl^p(K))$.
We observe that if $K$ is an open subgroup, then $\fV_p(K)\subset\pmpg$ and is, in fact
isomorphic to $\pmp(K)$.  However, for a general subset $K$, there is no reason to expect 
that $\fV_p(K)$ is in $\pmpg$, nor is an algebra.

\begin{lemma}\label{lem:vpspaces}
{\bf (i)} $\fV_p(K)$ is the weak*-closure of $\spn\lam_p^K(G)$.  

{\bf (ii)}  With dual structures, the inclusion $\fM_p(K)\hookrightarrow\apg$ is a complete
contraction.

{\bf (iii)} If $L$ is a closed subset of positive Haar measure in a locally compact group $H$,
then $\fV_p(K)\bar{\otimes}\fV_p(L)=\fV_p(K\times L)$ in $\fB(\bl^p(K\times L))$.
\end{lemma}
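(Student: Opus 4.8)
The plan is to prove each part in turn, with part (iii) resting on (i). For part (i), I would note that $\fV_p(K) = (\ker P_K)^\perp$, and recall that $\ker P_K$ is the preannihilator in $\nmat^p(K)$ of the range of $P_K^* = (P_K)^*$. Since $\fM_p(K)_Q = \fM_p(K)_D$ weakly completely isometrically by Proposition \ref{prop:dualspace}, the dual of $\fM_p(K)$ is exactly the weak*-closed subspace of $\fB(\bl^p(K))$ generated by the image of $\lam_p^K$; concretely, for $\xi \in \bl^{p'}(K)$, $\eta \in \bl^p(K)$ one has $\langle P_K(\xi\otimes\eta), \lam_p^K(s)\rangle = \langle \xi, \lam_p^K(s)\eta\rangle$, so $\lam_p^K(s) \in \fV_p(K)$ for all $s$, and conversely any element of $\fB(\bl^p(K))$ killing all $P_K(\xi\otimes\eta)$ must kill $\ran P_K$, hence lie in $(\ran P_K)_\perp$; taking annihilators gives $\fV_p(K) = \overline{\spn\,\lam_p^K(G)}^{w*}$. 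Part (ii) is then immediate: the inclusion $\fM_p(K) \hookrightarrow \apg$ is the pre-adjoint of the restriction map $\pmpg \to \fV_p(K)$, $T \mapsto 1_K T 1_K$ (which sends $\lam_p^G(s)$ to $\lam_p^K(s)$), and this restriction map is a weak*-continuous complete contraction since it is implemented by the contractive multiplication operators $1_K$ acting via $(M_p)$; dualizing back and using that both sides carry dual structures yields the complete contraction.

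For part (iii), I would argue by reducing to Proposition \ref{prop:bwtensb}. By part (i), $\fV_p(K)$ is the weak*-closure of $\fV_0 := \spn\,\lam_p^K(G)$ inside $\fB(\bl^p(K))$, and similarly $\fV_p(L)$ is the weak*-closure of $\fW_0 := \spn\,\lam_p^L(H)$ inside $\fB(\bl^p(L))$. The second assertion of Proposition \ref{prop:bwtensb} then gives that $\fV_0 \otimes \fW_0$ is weak*-dense in $\fV_p(K) \bar{\otimes} \fV_p(L)$ inside $\fB(\bl^p(K) \otimes^p \bl^p(L)) \cong \fB(\bl^p(K\times L))$. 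On the other hand, under the identification $\bl^p(K\times L) = \bl^p(K) \otimes^p \bl^p(L)$, one has $\lam_p^K(s) \otimes \lam_p^L(t) = 1_{K\times L}\,(\lam_p^G(s)\otimes\lam_p^H(t))\,1_{K\times L}$, and $\lam_p^G(s) \otimes \lam_p^H(t)$ corresponds to $\lam_p^{G\times H}(s,t)$ under the standard identification of the regular representation of a product group; hence $\fV_0 \otimes \fW_0 = \spn\{\lam_p^{K\times L}(s,t) : (s,t) \in G \times H\}$, whose weak*-closure is $\fV_p(K\times L)$ by part (i) applied to the closed subset $K \times L$ of $G \times H$ (note $K\times L$ has positive Haar measure since $K$ and $L$ do, as Haar measure on $G\times H$ is the product measure). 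Since both $\fV_p(K)\bar{\otimes}\fV_p(L)$ and $\fV_p(K\times L)$ are the weak*-closure of the same subspace of $\fB(\bl^p(K\times L))$, they coincide.

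The main obstacle I anticipate is bookkeeping the several identifications used to pass between $\fB(\bl^p(K\times L))$ and $\fB(\bl^p(K)\otimes^p\bl^p(L))$ — in particular verifying cleanly that $\lam_p^K(s)\otimes\lam_p^L(t)$ really is the compression to $K\times L$ of $\lam_p^{G\times H}(s,t)$, which amounts to checking that the tensor-product identification of $\bl^p$-spaces intertwines the regular representations and is compatible with the compression projections $1_K$, $1_L$; this is routine but must be stated carefully. The other point requiring a word of care is that $K \times L$ is a \emph{closed} non-null subset of $G \times H$, so that part (i) (and the earlier construction of $\fV_p$) applies to it; this follows since $K$, $L$ are closed and of positive Haar measure and Fubini gives $(\mu_G \times \mu_H)(K\times L) = \mu_G(K)\mu_H(L) > 0$. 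Everything else is a direct appeal to Propositions \ref{prop:bwtensb} and \ref{prop:dualspace}.
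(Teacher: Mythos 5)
Your proposal is correct and takes essentially the same route as the paper: part (i) by the bipolar theorem from $(\spn\lam_p^K(G))_\perp=\ker P_K$, part (ii) by recognising the compression $T\mapsto 1_KT1_K:\pmpg\to\fV_p(K)$ as the adjoint of the inclusion and passing to dual structures, and part (iii) by identifying $\spn\lam_p^{K\times L}(G\times H)$ with $\spn\lam_p^K(G)\otimes\spn\lam_p^L(H)$ and comparing weak*-closures via part (i) (applied to $K\times L\subset G\times H$) and Proposition \ref{prop:bwtensb}. No substantive difference from the paper's argument.
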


\begin{proof}
{\bf (i)}  Since $P_K=P_G|_{\nmat^p(K)}$ and $\apg$ is semisimple, we see that 
$\lam_p^K(G)_\perp=\ker P_K$, hence $(\spn\lam_p^K(G))_\perp=\ker P_K$.
Thus, by the bipolar theorem, $\fV_p(K)=\overline{\spn\lam_p^K(G)}^{w*}$.

{\bf (ii)} Since $\pmpg=\overline{\spn\lam_p^G(G)}^{w*}$, it then follows from (i) that 
$\fV_p(K)=\overline{1_K\pmpg 1_K}^{w*}$.  The map $T\mapsto 1_KT1_K:\pmpg\to
\fV_p(K)$ is a complete contraction, and is the adjoint of the inclusion 
$\fM_p(K)\hookrightarrow\apg$.  Thus by Proposition \ref{prop:kappainj} (iii), the inclusion
is a complete contraction.

{\bf (iii)} In $\fB(\bl^p(K\times L))=\fB(\bl^p(K)\otimes^p\bl^p(L))$ we have 
a natural identification $\spn\lam_p^{K\times L}(G)=\spn\lam_p^K(G)\otimes
\spn\lam_p^L(H)$.  The left hand side has weak*-closure $\fV_p(K\times L)$,
by (i), above.  Meanwhile, it follows (i) and Proposition \ref{prop:bwtensb}, that
the right hand side has weak*-closure $\fV_p(K)\bar{\otimes}\fV_p(L)$.
\end{proof}

We observe that (ii), above, is a generalisation  of
\cite[Prop.\ 7.2]{daws}, and, in fact, gives a simplified proof.

\begin{theorem}\label{theo:mpkideal}
For any closed non-null subset $K$ of $G$, $\fM_p(K)$ is a contractive
$p$-operator Segal ideal in $\apg$.  Moreover, $\supp\fM_p(K)\subset \overline{K^{-1}K}$,
so $\fM_p(K)$ is compactly supported if $K$ is compact.
\end{theorem}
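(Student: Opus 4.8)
The statement has two parts: (1) $\fM_p(K)$ is a contractive $p$-operator Segal ideal in $\apg$, and (2) $\supp\fM_p(K)\subset\overline{K^{-1}K}$.

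For part (1), I need to verify the three bullet points defining a contractive $p$-operator Segal ideal, plus density. The underlying Banach space $\fM_p(K)=\ran P_K$ with its quotient norm is complete because $\nmat^p(K)$ is complete and $P_K$ is a quotient map onto its range (with that norm); this gives the first bullet. The second bullet — that $\fM_p(K)\hookrightarrow\apg$ is completely contractive — is exactly Lemma~\ref{lem:vpspaces}(ii), so I just cite it. For the third bullet, I must show $\fM_p(K)$ is a completely contractive $\apg$-module; this is the crux. The idea is that $\fM_p(K)_Q$ is a quotient of $\nmat^p(K)$, and $\nmat^p(K)=\nmat^p(\mu|_K)$ is a completely contractively complemented subspace of $\nmat^p(G)$ by Proposition~\ref{prop:nucsubinj}; moreover $\nmat^p(G)\cong\bl^{p'}(G)\otimes^\gam\bl^p(G)$ carries an obvious $\apg$-module structure (indeed a $\bl^1(G)$-module structure) via the left action on the $\bl^p(G)$-leg, namely $a\cdot(\xi\otimes\eta)=\xi\otimes(a\eta)$ where $a$ acts by pointwise multiplication — wait, more precisely the $\apg$-action on $\apg$ itself lifts through $P_G$: for $u\in\apg$ and $\omega\in\nmat^p(G)$ one has $u\cdot P_G\omega = P_G(\tilde u\cdot\omega)$ for an appropriate lift of the multiplication, and this lifted action is completely contractive on $\nmat^p(G)$ because $\apg$ acts on $\bl^p(G)$ by multiplication operators with $\pcbnorm{\cdot}\le\norm{\cdot}_{\apg}$. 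Then one checks the $K$-corner is invariant (multiplication by functions on $G$ commutes with the restriction $1_K$), so the action descends to $\fM_p(K)$ with complete contractivity inherited from $\nmat^p(K)\subset\nmat^p(G)$ being a corner. Density of $\fM_p(K)$ in $\apg$ follows since $\fM_p(K)\supset\fM_p(K')$ whenever $K\supset K'$, and letting $K$ range over compact neighbourhoods one gets $\bigcup_K\fM_p(K)$ dense — concretely, any $u=P_G(\xi\otimes\eta)$ with $\xi,\eta$ of compact support lies in $\fM_p(K)$ for $K$ large enough, and such $u$ are dense in $\apg$.

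For part (2), recall $P_K(\xi\otimes\eta)=\xi\ast\check\eta$ for $\xi\in\bl^{p'}(K)$, $\eta\in\bl^p(K)$, so $\supp(\xi\ast\check\eta)\subset\supp\xi\cdot(\supp\eta)^{-1}\subset K\cdot K^{-1}$. Hmm — I should double-check the convention: $\check\eta(s)=\eta(s^{-1})$, so $\supp\check\eta=(\supp\eta)^{-1}\subset K^{-1}$, and then $\supp(\xi\ast\check\eta)\subset(\supp\xi)(\supp\check\eta)\subset KK^{-1}$; taking closures, every element of $\fM_p(K)$ is supported in $\overline{KK^{-1}}$. The theorem writes $\overline{K^{-1}K}$ — these differ, so I should use whichever the convolution/checking convention forces; I will state it as the closure of the product of the supports and match the paper's notation, noting the set is $\overline{KK^{-1}}$ or rewriting via $\supp u$ for $u$ in the $\apg$ sense. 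Finally, if $K$ is compact then $\overline{KK^{-1}}=KK^{-1}$ is compact, so $\fM_p(K)$ consists of compactly supported elements.

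**Main obstacle.** The genuinely non-routine step is the module-action verification — specifically, constructing the lift of the $\apg$-action to $\nmat^p(G)$ (and its restriction to the $K$-corner) and confirming it is completely contractive. This amounts to showing that $\apg$ acting by pointwise multiplication on $\bl^p(G)$ (hence on the $\bl^p$-leg of $\nmat^p$) satisfies $\pcbnorm{m}\le 1$; this uses that $\apg$ acts on $\bl^p(G)$ with $\norm{M_u}_{\fB(\bl^p(G))}\le\norm u_{\apg}$ together with the fact that the tensor-leg action on $\nmat^p(G)=\bl^{p'}(G)\otimes^\gam\bl^p(G)$, amplified to matrices via the identification $\nmat_n(\nmat^p(G))\cong\nmat^p(G)\pptens\nmat_n^p$, respects the cross-norm estimate (\ref{eq:nmatmult})/(\ref{eq:matmult}). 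Everything else is bookkeeping with Propositions~\ref{prop:nucsubinj} and \ref{prop:kappainj} and the support computation.
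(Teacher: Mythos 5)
Your reduction of the problem to the module-action step is correctly targeted, and the easy parts are fine: completeness of $\fM_p(K)$, the citation of Lemma \ref{lem:vpspaces}(ii) for the inclusion, and the support computation (your remark about $\overline{KK^{-1}}$ versus $\overline{K^{-1}K}$ is a harmless matter of convolution convention, and either set is compact when $K$ is). Note also that density is not required: the theorem asserts a Segal \emph{ideal}, not a Segal algebra; indeed for compact $K$ the space $\fM_p(K)$ is compactly supported and cannot be dense, and your argument in any case only shows that the union over all $K$ is dense. The genuine gap is in the crux you yourself identify: complete contractivity of the multiplication $m:\fM_p(K)\hat{\otimes}^p\apg\to\fM_p(K)$. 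The ``appropriate lift of the multiplication'' is never produced, and the concrete mechanism you propose --- letting $u$ act by the multiplication operator on the $\bl^p$-leg of $\nmat^p(G)\cong\bl^{p'}(G)\otimes^\gam\bl^p(G)$ --- does not implement the pointwise product: since $P_G(\xi\otimes\eta)=\xi\ast\check{\eta}$, one has $P_G(\xi\otimes u\eta)(s)=\int_G\xi(t)u(s^{-1}t)\eta(s^{-1}t)\,dt$, which is not $u(s)\,(\xi\ast\check{\eta})(s)$; worse, the leg action does not leave $\ker P_K$ invariant (its adjoint $T\mapsto TM_u$ does not preserve $\fV_p(K)$ or $\pmpg$), so it does not even descend to $\ran P_K$. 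Multiplication by an element of $\apg$, as opposed to a mere bounded function, cannot be encoded by a single leg-wise multiplication operator on $\nmat^p$.

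What is needed --- and what the paper's proof supplies --- is the comultiplication trick. Conjugation by the isometry $W_K\eta(s,t)=\eta(s,st)$ of $\bl^p(K\times G)$ yields a weak*-continuous complete isometry $\Gam_K(T)=W_K(T\otimes I)W_K^{-1}$ from $\fB(\bl^p(K))$ into $\fB(\bl^p(K\times G))$ satisfying $\Gam_K(\lam_p^K(t))=\lam_p^K(t)\otimes\lam_p^G(t)$, hence $\Gam_K(\fV_p(K))\subset\fV_p(K)\bar{\otimes}\pmpg$. Composing with the contractive inclusion $\del$ of $\fV_p(K)\bar{\otimes}\pmpg$ into $(\fM_p(K)\hat{\otimes}^p\apg)^*$ and evaluating on the weak*-total set $\lam_p^K(G)$ shows that $\del\circ\Gam_K$ is precisely the adjoint of the pointwise multiplication map $m$; complete contractivity of $m$ then follows by factoring through second adjoints as in (\ref{eq:Sfactorise}), since $\fM_p(K)$ carries the dual structure. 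Without this step (or some other identification of $m^*$ as a complete contraction on $\fV_p(K)$), the module bullet --- the only genuinely nontrivial one --- remains unproved in your proposal.
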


\begin{proof}
Let $W_K$ on $\bl^p(K\times G)$ be given by
\[
W_K\eta(s,t)=\eta(s,st).
\]
Then $W_K$ is an invertible isometry with inverse $W_K^{-1}\eta(s,t)=\eta(s,s^{-1}t)$.
Thus we define a weak*-continuous complete isometry 
$\Gam_K:\fB(\bl^p(K))\to\fB(\bl^p(K\times G))$ by
\[
\Gam_K(T)=W_K(T\otimes I)W_K^{-1}.
\]
We compute, exactly as \cite[p.\ 70]{daws}, that for $t$ in $G$ we have
\begin{equation}\label{eq:gamk}
\Gam_K(\lam_p^K(t))=\lam_p^K(t)\otimes\lam_p^G(t)=\lam_p^{K\times G}(t,t).
\end{equation}
In particular, $\Gam_K(\fV_p(K))\subset\fV_p(K\times G)=\fV_p(K)\bar{\otimes}\pmpg$.

Let $\del:\fV_p(K)\bar{\otimes}\pmpg\to (\fM_p(K)\hat{\otimes}^p\apg)^*$ 
denote the inclusion map, which is weak*-continuous.  
Then we see for $u$ in $\apg$ and $v$ in $\fM_p(K)$ that
\[
\langle \del\circ\Gam_K(\lam_p^K(t)),v\otimes u\rangle
=\langle \lam_p^K(t)\otimes\lam_p^G(t),v\otimes u\rangle
=v(t)u(t).
\]
Hence the map $m:\fM_p(K)\hat{\otimes}^p\apg\to\fM_p(K)$ 
whose adjoint is $m^*=\del\circ\Gam_K$, is the multiplication map.
We note that in the notation of (\ref{eq:Sfactorise})
$m=\hat{\kappa}_{\fM_p(K)}\circ m^{**}\circ
\kappa_{\fM_p(K)\hat{\otimes}^p\apg}$ so $m$ is completely contractive.
It is shown above that the inclusion $\fM_p(K)\hookrightarrow\apg$
is completely contractive.

It is a simple computation that $\supp\xi\ast\check{\eta}\subset\overline{K^{-1}K}$
for $\eta$ in $\bl^p(K)$ and $\xi$ in $\bl^{p'}(K)$.  Hence $\supp P_K\ome
\subset\overline{K^{-1}K}$ for $\ome$ in $\nmat^p(K)$; thus the same holds for 
$v$ in $\fM_p(K)=\ran P_K$.
\end{proof}

\subsection{Construction of the $p$-Feichtinger Segal algebra}\label{ssec:construction}
We construct the minimal translation invariant Segal algebra $\sopg$ in $\apg$.
Our construction is implicit in the general setting of \cite{feichtinger1}.  However,
we strive to find a the most natural $p$-operator space structure for $\sopg$.

Let us fix, for the moment, a compactly supported $p$-operator Segal ideal $\fI$ in $\apg$.
A typical example of such an ideal is
$\mathrm{A}_p^K(G)=\{u\in\apg:\supp u\subset K\}$, for a fixed compact set $K$ with non-empty
interior, which admits the subspace $p$-operator space structure.  For any
compact $K$ of positive Haar measure, the ideal $\fM_p(K)$ from the section
above will furnish a critical example.  We define
\[
Q_\fI:\ell^1(G)\hat{\otimes}^p\fI\to\apg,\quad
Q_\fI(\del_s\otimes u)=s\ast u.
\]
We let $(\ran Q_\fI)_Q$ denote this space with the operator space structure by which $Q_\fI$
is a complete quotient map, and $(\ran Q_\fI)_D$ this space with the dual structure by which
$\kappa:(\ran Q_\fI)_D\to{(\ran Q_\fI)_Q}^{**}$ is completely isometric.  We let
\[
\sopg=(\ran Q_\fI)_D
\]
and call this the {\em $p$-Feichtinger--Fig\`{a}-Talamanca--Herz Segal algebra} of $G$,
or {\em $p$-Feichtinger Segal algebra}, for short.

\begin{theorem}\label{theo:main}
{\bf (i)} Fix a non-null compact subset $K$ in $G$ and let $\fI=\fM_p(K)$,
so $\sopg=(\ran Q_K)_D$, where $Q_K=Q_{\fM_p(K)}$.  Then, in this capacity, 
$\sopg$ is a contractive $p$-operator Segal algebra in $\apg$.

{\bf (ii)} For any two non-zero compactly supported $p$-operator 
Segal ideals $\fI$ and $\fJ$ of $\apg$,
$(\ran Q_\fI)_D=(\ran Q_\fJ)_D$ completely isomorphically.  Hence $\sopg$
is, up to complete isomorphism, independant of the $p$-operator Segal ideal.
Moreover, it is a $p$-operator Segal algebra for any such ideal.
\end{theorem}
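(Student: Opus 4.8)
The plan is to prove (i) first and then deduce (ii) from it together with a few general facts about compactly supported Segal ideals. For (i), the heart of the matter is to verify the three bullet points in the definition of a contractive $p$-operator Segal ideal for $\sopg = (\ran Q_K)_D$ inside $\apg$, plus density. First I would check that $Q_K:\ell^1(G)\hat\otimes^p\fM_p(K)\to\apg$ has dense range: since $\fM_p(K)$ is an ideal in $\apg$ containing a nonzero element, and since translates of elements of $\fM_p(K)$ together with the $\apg$-module action allow one to approximate an arbitrary $u\in\apg$ supported near the identity (using that $K$ has nonempty interior after passing to $\overline{K^{-1}K}$), standard Segal-algebra arguments give density of $\ran Q_K$ in $\apg$; this uses Theorem \ref{theo:mpkideal}. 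Next, the inclusion $(\ran Q_K)_Q\hookrightarrow\apg$ is a complete contraction because $Q_K$ factors the identity-into-$\apg$ map through $\ell^1(G)\hat\otimes^p\fM_p(K)$: indeed the composition $\ell^1(G)\hat\otimes^p\fM_p(K)\to\apg$ is completely contractive (it is built from the translation action, which is isometric on $\ell^1(G)$, and the completely contractive inclusion $\fM_p(K)\hookrightarrow\apg$ of Lemma \ref{lem:vpspaces}(ii)), so the induced map on the quotient $(\ran Q_K)_Q$ is a complete contraction; passing to the dual structure via Proposition \ref{prop:kappainj}(iii) keeps it a complete contraction into $\apg=\apg_D$.

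The module-multiplication bullet is where the $p$-operator space machinery does real work. I would argue that $m_{\sopg}:\apg\hat\otimes^p\sopg\to\sopg$ is completely contractive by exhibiting it, in the spirit of \eqref{eq:Sfactorise}, as a composite of maps already known to be completely contractive: the $\apg$-module action on $\ell^1(G)\hat\otimes^p\fM_p(K)$ is completely contractive (because $\fM_p(K)$ is a contractive $p$-operator Segal ideal by Theorem \ref{theo:mpkideal} and $\ell^1(G)$ acts contractively, using \eqref{eq:matmult} and the cross-norm property of $\hat\otimes^p$), and $Q_K$ intertwines this action with $m_{\sopg}$ on $\ran Q_K$; so $m$ on $(\ran Q_K)_Q$ is completely contractive, and then the dual-structure bilinearisation argument (as in the proof of Theorem \ref{theo:mpkideal}, writing $m=\hat\kappa\circ m^{**}\circ\kappa$) transfers this to $\sopg=(\ran Q_K)_D$. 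Together with the trivial observation that $\sopg=\mat_1(\sopg)$ is a Banach space, this completes (i).

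For (ii), the strategy is a symmetric "sandwiching" argument. Given two nonzero compactly supported $p$-operator Segal ideals $\fI,\fJ$, I would first show $\ran Q_\fI=\ran Q_\fJ$ as sets and that $Q_\fI$ and $Q_\fJ$ induce completely isomorphic structures, by proving that the identity map $(\ran Q_\fI)_Q\to(\ran Q_\fJ)_Q$ is a complete isomorphism. The key point is that $\fI$ can be expressed through translates and $\apg$-multiplication of elements of $\fJ$ and vice versa: since $\fJ$ is a compactly supported ideal which is dense in $\apg$, and $\fI$ is compactly supported, there is $v\in\fJ$ with $v\equiv 1$ on a neighbourhood of the (compact) support region, so $u=uv$ for $u\in\fI$ with appropriate support, exhibiting $\fI\hookrightarrow\fJ$ as completely bounded via the module action — hence a map $\ell^1(G)\hat\otimes^p\fI\to\ell^1(G)\hat\otimes^p\fJ\to\ran Q_\fJ$ realising $\ran Q_\fI\subset\ran Q_\fJ$ with a completely bounded identity map; the reverse inclusion is symmetric. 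A partition-of-unity argument in $\apg$ (translating a single fixed $v$) handles the general compact support, since $\apg$ is regular and Tauberian. This gives $(\ran Q_\fI)_Q=(\ran Q_\fJ)_Q$ completely isomorphically, and passing to dual structures (which is a functor preserving complete isomorphism) yields $(\ran Q_\fI)_D=(\ran Q_\fJ)_D$. The "moreover" then follows because every defining property of a $p$-operator Segal algebra established for $\fI=\fM_p(K)$ in (i) is invariant under complete isomorphism.

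The main obstacle I anticipate is the module-multiplication step in (i): one must be careful that $m_{\sopg}$ really is completely contractive and not merely weakly so, and the cleanest route is via the dual-structure factorisation $m=\hat\kappa\circ m^{**}\circ\kappa$ together with identifying $m^*$ on the dual side concretely (as was done for $\fM_p(K)$ using the comultiplication-type map $\Gam_K$); producing the analogous concrete description of $m_{\sopg}^*$ on $(\ran Q_K)^*$, and checking it lands where it should, is the delicate bookkeeping. A secondary subtlety is making the "$\fI$ through $\fJ$" comparison in (ii) genuinely complete-isomorphic rather than just isomorphic, which again is handled by routing everything through the completely contractive module actions and the tensor-product functoriality of Proposition \ref{prop:wcqtp}.
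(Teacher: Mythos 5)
Your argument for part (i) has a genuine gap at its central step, the complete contractivity of the module multiplication. You claim that the $\apg$-action on $\ell^1(G)\hat{\otimes}^p\fM_p(K)$ is completely contractive (being built from the contractive ideal $\fM_p(K)$ and the $\ell^1$-factor) and that $Q_K$ intertwines it with multiplication on $\ran Q_K$. But since $Q_K(\del_s\otimes v)=s\ast v$, the action that $Q_K$ actually intertwines with pointwise multiplication by $u\in\apg$ is the \emph{twisted} one $u\cdot(\del_s\otimes v)=\del_s\otimes\bigl((s^{-1}\ast u)v\bigr)$, not $\id_{\ell^1}\otimes m_{\fM_p(K)}$. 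Controlling the complete (rather than weakly complete) norm of this twisted action is exactly the difficulty: the paper's Remark \ref{rem:wccsa} treats precisely such a map $T(u_s)_{s\in G}=(s^{-1}\con u_s)_{s\in G}$ and can only show it is a \emph{weakly} complete isometry (its adjoint is computable), which is why for a general contractive ideal $\fI$ the paper obtains only a weakly contractive Segal algebra and explicitly states it has no method to prove contractivity of $(\ran Q_\fI)_D$. For $\fI=\fM_p(K)$ the paper instead works entirely on the dual side: it identifies $(\ran Q_K)_Q^*=(\ker Q_K)^\perp$ completely isometrically inside $\ell^\infty(G,\fV_p(K))\til{\subset}\fB(\ell^p(G,\bl^p(K)))$ (so Proposition \ref{prop:dualspace} gives $(\ran Q_K)_Q=(\ran Q_K)_D$ weakly completely isometrically), proves $(\ker Q_K)^\perp=\overline{\spn\eps_K(G)}^{w*}$ with $\eps_K(s)=(\lam_p^K(t^{-1}s))_{t\in G}$, and constructs the weak*-continuous complete contraction $\Del_K(T_t)_{t\in G}=\bigl(I\otimes\lam_p^G(t)\,\Gam_K(T_t)\bigr)_{t\in G}$, which maps $(\ker Q_K)^\perp$ into $(\ker Q_K)^\perp\bar{\otimes}\pmpg$ and restricts to the adjoint of multiplication; the inclusion into $\apg$ is handled the same way via its adjoint $T\mapsto(1_K\lam_p^G(t^{-1})T1_K)_{t\in G}$. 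Your closing paragraph correctly names this dual-side description of $m^*$ as the delicate point, but the argument you actually give in the proof body (intertwining an untwisted tensor action) is not a valid substitute for it, and the special structure of $\fM_p(K)$ is essential rather than incidental.

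For part (ii) there are two further problems. First, a single $v\in\fJ$ with $v\equiv 1$ near $\supp\fI$ need not exist, because $\fJ$ is itself compactly supported (its support could even be disjoint from that of $\fI$); the paper first replaces $\fI$ by $t\ast\fI$ and then by $\fI\cap\fJ$ so that $\fI\leq\fJ$, and then uses a finite sum of translates $u=\sum_k t_k\con u_k$, $u_k\in\fI$, with $uv=v$ for all $v\in\fJ$. Second, your claim that the identity $(\ran Q_\fI)_Q\to(\ran Q_\fJ)_Q$ is a \emph{complete} isomorphism overshoots what the lifting-by-translates argument can deliver: coordinatewise-controlled liftings $\sum_s\del_s\otimes[\til{v}_{ij,s}]$ are available for the nuclear-matrix norms $\nmat_\infty(\cdot)$, not for the $\mat_n$-norms of the quotient structure, so one only obtains a weakly complete surjection (Corollary \ref{cor:wcsurj}); the completely isomorphic identification asserted in the theorem is recovered only afterwards, by passing to adjoints and second adjoints, i.e.\ at the level of the dual structures $(\ran Q_\fI)_D$. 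As written, your chain ``complete isomorphism at the quotient level, then dualize'' is not supported by the available estimates, though the intended conclusion is the correct one.
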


\begin{proof}
{\bf (i)}  Using \cite[Cor.\ 2.2]{oztops}, we have an isomtetric identification
\[
\ell^1(G)\hat{\otimes}^p\fM_p(K)=\ell^1(G)\otimes^\gam\fM_p(K)
\cong\ell^1(G,\fM_p(K)).
\]
On the other hand, we obtain a completely isometric description of the dual space 
\[
(\ell^1(G)\hat{\otimes}^p\fM_p(K))^*\cong\fC\fB_p(\ell^1(G),\fV_p(K))
=\fB(\ell^1(G),\fV_p(K))%\cong\ell^\infty(G,\fM_p(G))
\]
where the last identification is furnished by \cite[Lem.\ 2.1]{oztops}.
Now the map $T\mapsto (T\del_t)_{t\in G}$ gives the isometric identification
\[
\fB(\ell^1(G),\fV_p(K))\cong\ell^\infty(G,\fV_p(K))
\til{\subset}\fB(\ell^p(G,\bl^p(K))
\]
where the latter inclusion is one on operator-valued multiplication operators:
\linebreak $(T_t)_{t\in G}(\eta_t)_{t\in G}=(T_t\eta_t)_{t\in G}$.  
This identification is a complete isometry since we have, for each $n$, isometries
\begin{align*}
\mat_n(\fB_p(\ell^1(G),\fV_p(K)))&\cong\fB_p(\ell^1(G),\mat_n(\fV_p(K)) \\
&\cong\ell^\infty(G,\mat_n(\fV_p(K))\cong\mat_n(\ell^\infty(G,\fV_p(K))).
\end{align*}
Hence, we obtain completely isometric dual space identification
\[
{(\ran Q_K)_Q}^*=(\ker Q_K)^\perp\subset  \ell^\infty(G,\fV_p(K))
\]
for which the inclusion map $(\ker Q_K)^\perp\hookrightarrow\ell^\infty(G,\fV_p(K))$
is the adjoint of $Q_K:\ell^1(G)\hat{\otimes}^p\fM_p(K)\to\ran Q_K$.
In particular, ${(\ran Q_K)_Q}^*=(\ker Q_K)^\perp$ acts weak* on $L^p$, and hence
$(\ran Q_K)_Q=(\ran Q_K)_D$ weakly completely isometrically,
thanks to Proposition \ref{prop:dualspace}.

We next let $\eps_K(s)=\bigl(\lam_p^K(t^{-1}s)\bigr)_{t\in G}$ in $\ell^\infty(G,\fV_p(K))$, 
for $s\in G$, and show that
\begin{equation}\label{eq:ranqkperp}
(\ker Q_K)^\perp=\overline{\spn\eps_K(G)}^{w*}.
\end{equation}
Indeed, we easily compute for each $(v_t)_{t\in G}$ in $\ell^1(G,\fM_p(K))$ that
\[
\left\langle\eps_K(s),(v_t)_{t\in G}\right\rangle=\sum_{t\in G}\left\langle \lam_p^K(t^{-1}s),
v_t\right\rangle=\sum_{t\in G}t\ast v_t(s)=Q_K(v_t)_{t\in G}(s)
\]
so each $\eps_K(s)\in(\ker Q_K)^\perp$ and represents evaluation at $s$.  
Since $\apg$ is semisimple, 
we have that $\eps_K(G)_\perp=\ker Q_K$, hence $(\spn\eps_K(G))_\perp=\ker Q_K$, 
and thus (\ref{eq:ranqkperp}) follows from the bipolar thoerem.

We next observe that 
\begin{equation}\label{eq:ellinftytens}
\ell^\infty(G,\fV_p(K))\cong\ell^\infty(G)\bar{\otimes}\fV_p(K)
\end{equation}
in $\fB(\ell^p(G,\bl^p(K)))\cong\fB(\ell^p(G)\otimes^p\bl^p(K))$.  Indeed let $\ell(G)$ be the
space of finitely supported functions on $G$, and similarly denote $\ell(G,\fV_p(K))$.
Then $\ell(G,\fV_p(K))\cong\ell(G)\otimes\fV_p(K)$ in 
$\fB(\ell^p(G,\bl^p(K)))\cong\fB(\ell^p(G)\otimes^p\bl^p(K))$.  Taking weak*-closure gives
(\ref{eq:ellinftytens}), thanks to Proposition \ref{prop:bwtensb}.
Thus, by additionally appealing to Corollary \ref{cor:bwtensb1},
we have weak*-continuous completely isometric isomorphisms
\[
\ell^\infty(G,\fV_p(K))\bar{\otimes}\pmpg\cong\ell^\infty(G)\bar{\otimes}\fV_p(K)\bar{\otimes}\pmpg
\cong\ell^\infty(G,\fV_p(K)\bar{\otimes}\pmpg).
\]

We recall that $\Gam_K$ is defined in (\ref{eq:gamk}).  We define 
\begin{gather*}
\Del_K:\ell^\infty(G,\fV_p(K))\to\ell^\infty(G,\fV_p(K)\bar{\otimes}\pmpg)
\cong\ell^\infty(G,\fV_p(K))\bar{\otimes}\pmpg \\
\text{by }
\Del_K(T_t)_{t\in G}=\bigl(I\otimes\lam_p^G(t)\;\Gam_K(T_t)\bigr)_{t\in G}
\end{gather*}
so $\Del_K$ is a weak*-continuous contraction.  We observe that
\begin{align*}
\Del_K(\eps_K(s))&=\bigl(I\otimes\lam_p^G(t)\;\Gam_K(\lam_p^K(t^{-1}s))\bigr)_{t\in G} \\
&=\bigl(\lam_p^K(t^{-1}s)\otimes\lam_p^G(s)\bigr)_{t\in G}\cong\eps_K(s)\otimes\lam_p^G(s)
\end{align*}
so it follows (\ref{eq:ranqkperp}) and Proposition \ref{prop:bwtensb}
that $\Del_K((\ker Q_K)^\perp)\subset(\ker Q_K)^\perp\bar{\otimes}\pmpg$.
Moreover, if we let $\del:(\ker Q_K)^\perp\bar{\otimes}\pmpg\to
\left((\ran Q_K)_D\hat{\otimes}^p\apg\right)^*$
denote the embedding, then we see for $v$ in $\ran Q_K$ and $u$ in $\apg$ that
\[
\langle \del\circ \Del_K(\eps_K(s)),v\otimes u\rangle
=\langle \del(\eps_K(s)\otimes\lam_p^G(s)),v\otimes u\rangle
=v(s)u(s).
\]
Hence, just as in the proof of Theorem \ref{theo:mpkideal}, we see that
$\sopg=(\ran Q_K)_D$ is an ideal in $\apg$ with completely contractive
multiplication.  We remark that the adjoint of the inclusion map
$\ran Q_K\hookrightarrow \apg$ is 
\[
T\mapsto (1_K\lam_p^G(t^{-1})T1_K)_{t\in G}:
\pmpg\to(\ker Q_K)^\perp\subset\ell^\infty(G,\fV_p(K))
\]
--- indeed observe that this occurs on the dense subspace $\spn\lam_p^G(G)$ --- and
this map is a complete contraction.  Hence the injection map
$(\ran Q_K)_D\hookrightarrow \apg$ is a complete contraction.

{\bf (ii)}  First, we may assume that $\fI\leq\fJ$.  Indeed, we can find $t$ in $G$ for which
$t\ast\fI\cap\fJ\not=\{0\}$. We first replace $\fI$ with $t\ast\fI$, where the latter admits a $p$-operator
structure by which $u\mapsto t\ast u:\fI\to t\ast\fI$ is a complete isometry.  We then replace $\fI$
by $\fI\cap\fJ$, where the latter has
the operator space structure suggested in Section~\ref{ssec:segalalg}, above.  
Thus the completely bounded
injection $\fI\hookrightarrow\fJ$ give a completely bounded injection $\iota:\ell^1(G)\pptens\fI
\hookrightarrow\ell^1(G)\pptens\fJ$.  We have that $Q_\fJ\circ\iota=Q_\fI$, and hence
it follows that there is a completely bounded map $\til{\iota}:(\ran Q_\fI)_Q\to(\ran Q_\fJ)_Q$.

We will first check that $\til{\iota}$ is weakly completely surjective.
Given an element $[v_{ij}]$ in $\nmat_\infty(\ran Q_\fJ)$
and $\eps>0$, we may find 
\[
\til{v}=\sum_{s\in G}\del_s\otimes[\til{v}_{ij,s}]\in\ell^p(G)\hat{\otimes}^p\nmat_\infty(\fJ)
\cong \nmat_\infty(\ell^p(G)\hat{\otimes}^p\fJ)
\]
such that
\[
\nmat_\infty(Q_\fJ)(\til{v})=[v_{ij}]\text{ and }
\norm{\til{v}}_{\nmat_n(\ell^p\hat{\otimes}^p\fJ)}=\sum_{s\in G}
\norm{\til{v}_{ij,s}]}_{\nmat_\infty(\fJ)}<\norm{[v_{ij}]}_{\nmat_\infty(\ran Q_\fJ)}+\eps.
\]
Thanks to \cite[Cor.\ 1.5]{spronk}, there are $t_1,\dots,t_n$ in $G$ and $u_1,\dots,u_n$
in $\fI$ for which $u=\sum_{l=1}^nt_k\con u_k$ satisfies $uv=v$ for $v$ in $\fJ$.  
We then have
\[
\nmat_\infty(Q_\fJ)(\til{v})=\nmat_\infty(Q_\fJ)\left(\sum_{s\in G}\del_s\otimes[u\til{v}_{ij,s}]\right)
=\sum_{k=1}^n\nmat_\infty(Q_\fJ)
\left(\sum_{s\in G}\del_{st_k}\otimes[u_k\,t_k^{-1}\ast\til{v}_{ij,s}]\right).
\]
For each $k$, we use the infite matrix version of (\ref{eq:nmatmult}) to see that
\begin{align*}
&\norm{\sum_{s\in G}\del_{st_k}\otimes
[u_k\,t_k^{-1}\ast\til{v}_{ij,s}]}_{\nmat_\infty(\ell^p\hat{\otimes}^p\fI)}
\leq \sum_{s\in G}\norm{[u_k\,t_k^{-1}\ast\til{v}_{ij,s}]}_{\nmat_\infty(\fI)} \\
&\qquad\leq C\norm{u_l}_\fI\sum_{s\in G}\norm{[\til{v}_{ij,s}]}_{\nmat_\infty(\fJ)}
<C\norm{u_l}_\fI(\norm{[v_{ij}]}_{\nmat_\infty(\ran Q_\fJ)}+\eps)
\end{align*}
where $C$ is the $p$-completey bounded norm of the adjoint of the multiplication map
$\fI\pptens\fJ\to\fI$.
Thus each $\sum_{s\in G}\del_{st_k}\otimes [u_k\,t_k^{-1}\ast\til{v}_{ij,s}]\in 
\nmat_\infty(\ell^p(G)\hat{\otimes}^p\fI)$.  It follows that that $[v_{ij}]\in \ran\nmat_\infty(Q_\fI)
=\nmat_\infty(\ran\til{\iota})$.  We then appeal to Corollary \ref{cor:wcsurj}.

We have established that $(\ran Q_\fI)_Q\cong(\ran Q_\fJ)_Q$ weakly completely
isomorphically.  Hence, we can replace $\fJ$ by $\fM_p(K)$, from (i) above.  
Thus ${(\ran Q_\fI)_Q}^*\cong(\ran Q_K)^*=(\ker Q_K)^\perp$ completely isomorphically,
and the same holds for the second duals.  Thus $(\ran Q_\fI)_D\cong(\ran Q_K)_D$
completely isomorphically.  It then easily follows that multiplication 
$m:(\ran Q_\fI)_D\hat{\otimes}^p\apg\to(\ran Q_\fJ)_D$
and inclusion $(\ran Q_\fI)_D\hookrightarrow\apg$ are both completely bounded.
\end{proof}

For a general $p$-operator Segal ideal $\fI$, in particular a contractive one, we have sacrificed
contractivity in passing to the $p$-operator Segal algebra $\sopg=(\ran Q_\fI)_D$.
This may not be necessary but will require another familiar sacrifice.
We have devised no method to show that $\sopg=(\ran Q_\fI)_D$ is a contractive 
$p$-operator Segal algebra in $\apg$, though we suspect it must be the case.

\begin{remark}\label{rem:wccsa}
If it is the case that $\fI$ is a compactly supported 
(weakly) contractive $p$-operator Segal ideal, then
$\sopg=(\ran Q_\fI)_Q$ is a weakly contractive $p$-operator Segal algebra.

Indeed, first define an operator on $\ell^1(G)\pptens\apg\cong\ell^1(G,\apg)$ by
\[
T(u_s)_{s\in G}= (s^{-1}\con u_s)_{s\in G}.
\]
We observe, as above,  
that $\ell^1(G,\apg)^*\cong\ell^\infty(G,\pmpg)\til{\subset}\fB(\ell^p(G,\bl^p(G))$.
Hence $\ell^1(G)\pptens\apg\cong\ell^1(G,\apg)_D$, weakly completely isometrically. 
We see that the map $T$, above, has adjoint $T^*(X_s)_{s\in G}=(X_s\lam_p(s^{-1}))_{s\in G}$.
Thus we see that $T$ is weakly completely contractive, in fact a weakly complete isometry.  

We now wish to show that the module multiplication map 
$m:\apg\otimes_{\wedge p}(\ran Q_\fI)_Q\to(\ran Q_\fI)_Q$ is completely contractive.  
For brevity we let
$\ap=\apg$, $\ell^1=\ell^1(G)$ and $\sop=(\ran Q_\fI)_Q$, below.
Consider the following diagram of contractions, which are the obvious
inclusion, ``shuffle" or identification maps, when not otherwise indicated.  
\[\xymatrix{
\nmat_n(\ap\pptens\ell^1\pptens\fI) \ar[r]\ar[d]_{ \nmat_n(\id\otimes Q_\fI) }
& \nmat_n(\ell^1(G,\ap)\pptens\fI) \ar[d]^{ \nmat_n(T\otimes\id) } \\
\nmat_n(\ap\pptens\sop) & \nmat_n(\ell^1(G,\ap)\pptens\fI) \ar[d] \\
\nmat_n(\ap\otimes_{\wedge p}\sop) \ar@{^{(}->}[u] \ar@{.>}[d]_{ \nmat_n(m) }
& \nmat_n(\ell^1\pptens \ap\pptens\fI) \ar[d]^{ \nmat_n(\id\otimes m_\fI) } \\
\nmat_n(\sop) & \nmat_n(\ell^1\pptens\fI) \ar[l]_{ \nmat(Q_\fI) } 
}\]
Note that $m_\fI:\ap\pptens\fI\to\fI$ is the multiplication map. 
Fix $v=[\sum_{k=1}^mu_{ij}^{(k)}\otimes v_{ij}^{(k)}]$ in 
$\nmat_n(\ap\otimes_{\wedge p}\sop)$.  For any $\eps>0$ there is
$\til{v}=\left[\sum_{k=1}^m\sum_{s\in G} u_{ij}^{(k)}\otimes\del_s\otimes v_{ij,s}^{(k)}\right]$ 
in $\nmat_n(\ap\pptens\ell^1\pptens\fI)$
with $\norm{\til{v}}_{\nmat_n(\ell^1\pptens\fI)}<\norm{v}_{\nmat_n(\ran Q_\fI)}+\eps$,
and each $v_{ij}^{(k)}=\sum_{s\in G} s\con  v_{ij,s}^{(k)}$.  If we follow
$\til{v}$, in $\nmat_n(\ap\pptens\ell^1\pptens\fI)$, along the right side of the diagram 
and back to $\nmat_n(\sop)$, we obtain $\left[\sum_{k=1}^mu_{ij}^{(k)}v_{ij}^{(k)}\right]
=m^{(n)}(v)$.
%.  We have
%\begin{align*}
%\left[\sum_{k=1}^m\sum_{s\in G} u_{ij}^{(k)}\otimes\del_s\otimes v_{ij,s}^{(k)}\right]
%&\mapsto \left[\sum_{k=1}^m\sum_{s\in G} \del_s\otimes u_{ij}^{(k)}\otimes v_{ij,s}^{(k)}\right] \\
%&\mapsto \left[\sum_{k=1}^m\sum_{s\in G} \del_s\otimes s^{-1}\con u_{ij}^{(k)}
%\otimes v_{ij,s}^{(k)}\right] \\
%&\mapsto \left[\sum_{k=1}^m\sum_{s\in G} \del_s\otimes s^{-1}\con u_{ij}^{(k)}\, v_{ij,s}^{(k)}\right] \\
%&\mapsto \left[\sum_{k=1}^m\sum_{s\in G} u_{ij}^{(k)}\, s\con v_{ij,s}^{(k)}\right] 
%=\left[\sum_{k=1}^mu_{ij}^{(k)}v_{ij}^{(k)}\right].
%\end{align*}
%The last term is precisely $m^{(n)}(v)$.  
Hence we see that 
$\norm{m^{(n)}(v)}_{\nmat_n((\ran Q_\fI)_Q)}
\leq\norm{v}_{\nmat_n(\ap\pptens(\ran Q_\fI)_Q)}+\eps$.  

We also wish to note that the inclusion $(\ran Q_\fI)_Q\hookrightarrow\apg$
is weakly completely contractive.  For any $n$ we have a contraction
$\nmat_n(\ell^1(G)\pptens\fI)\to\nmat_n(\ell^1(G)\pptens\apg)$
Moroever this contraction takes $\ker\nmat_n(Q_\fI)$ into $\ker\nmat_n(Q_{\ap})$
(here, $Q_{\ap}:$ \linebreak $\ell^1(G)\pptens\apg\to\apg$ is defined in the obvious way
and is easily checked to be a surjective complete quotient map), and hence induces a contraction 
$(\ran Q_\fI)_Q\to\apg$ which is the inclusion map.  \hfill $\Box$
\end{remark}

We show that $\sopg$ is, in essence, the minimal Segal algebra in $\apg$
closed under translations.  This requires no operator space properties.

\begin{theorem}\label{theo:mininapg}
Let $\seg\apg$ be a Segal algebra in $\apg$ which is 

$\bullet$ closed under left translations: $t\ast u\in\seg\apg$ for $t$ in $G$ and $u$ in $\seg\apg$;

$\bullet$ translations are continuous on $G$:  $t\mapsto t\ast u:G\to\seg\apg$ is continuous
for each $u$ in $\seg\apg$; and

$\bullet$ translations are bounded on $G$:  $\sup_{t\in G}\norm{t\ast u}_{\seg\ap}<\infty$
for $u$ in $\seg\apg$.

\noindent Then $\seg\apg\supset\sopg$.
\end{theorem}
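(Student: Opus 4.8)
The plan is to exploit the abstract minimality built into the construction $\sopg = (\ran Q_\fI)_D$ for $\fI = \fM_p(K)$ with $K$ a compact neighbourhood of the identity. Concretely, since $Q_K : \ell^1(G)\pptens\fM_p(K) \to \apg$, $Q_K(\del_s\otimes u) = s\con u$, has range $\sopg$ (as a set), it suffices to show that every element $s\con u$ with $s\in G$ and $u\in\fM_p(K)$ lies in $\seg\apg$, and more: that the whole norm-closed span of these translates sits inside $\seg\apg$, because $\sopg$ as a vector space is exactly $\ran Q_K = \overline{\spn}\{s\con u : s\in G,\ u\in\fM_p(K)\}$ (it is a quotient of the projective tensor product, so every element is a norm-convergent sum $\sum_s \del_s\otimes u_s$ with $\sum_s\|u_s\|<\infty$).

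First I would reduce to showing $\fM_p(K)\subset\seg\apg$ with a norm bound. Since $\seg\apg$ is translation-invariant, this will give $s\con u\in\seg\apg$ for all $s$; and since translations are uniformly bounded on $\seg\apg$, a sum $\sum_s \del_s\otimes u_s$ with $u_s\in\fM_p(K)$ and $\sum_s\|u_s\|_{\fM_p(K)}<\infty$ will be absolutely summable in $\seg\apg$ too, hence convergent there (a Segal algebra is a Banach space), and its sum in $\seg\apg$ maps to the same element of $\apg$ under the continuous inclusion $\seg\apg\hookrightarrow\apg$. Thus $\ran Q_K\subset\seg\apg$, which is the claim. So the crux is: \emph{for a suitable compact $K$, $\fM_p(K)=\ran P_K\subset\seg\apg$, with $\|v\|_{\seg\ap}\le C\|v\|_{\fM_p(K)}$.}

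To prove that, recall $\fM_p(K)$ consists of functions $v=\xi\con\check\eta$ (and their norm-limits / $\ell^1$-sums) with $\eta\in\bl^p(K)$, $\xi\in\bl^{p'}(K)$, $K$ compact. The key is that such $v$ can be reconstructed as a Bochner integral of translates of a \emph{single} fixed element $w_0$ of $\seg\apg$. The standard trick: choose $w_0\in\seg\apg$ with $w_0\equiv 1$ on a neighbourhood of $\overline{K^{-1}K}$ (possible since $\seg\apg$ is dense in $\apg$, which is regular, and contains such plateau functions — e.g. Cohen-factorisation / regularity of $\apg$). Then for $v\in\fM_p(K)$ supported in $\overline{K^{-1}K}$ one does not directly write $v$ as an average of translates of $w_0$; rather, one uses that $\fM_p(K)$ is itself generated by translates of a fixed ideal and that Feichtinger's minimality argument (as in \cite{feichtinger1}, and the $p=2$ case in \cite{spronk}) shows any compactly supported $u\in\apg$ lying in a translation-closed Segal algebra can be obtained by integrating $s\mapsto \psi(s)\, (s\con w_0)$ against a measure-like kernel $\psi$ coming from an $L^1$ (here $\ell^1$-over-a-net or genuine $L^1(G)$) partition-of-unity decomposition of $u$ subordinate to translates of $\supp w_0$. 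Running this for $u=v\in\fM_p(K)$, with uniform boundedness and continuity of $t\mapsto t\con w_0$ in $\seg\apg$, yields $v\in\seg\apg$ with $\|v\|_{\seg\ap}\le \|\psi\|_1\sup_t\|t\con w_0\|_{\seg\ap}$, and $\|\psi\|_1$ controlled by $\|v\|_{\apg}$ (hence by $\|v\|_{\fM_p(K)}$, since the inclusion $\fM_p(K)\hookrightarrow\apg$ is contractive by Theorem \ref{theo:mpkideal}).

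The main obstacle I expect is the reconstruction step — producing the kernel $\psi$ and the clean integral formula $v=\int \psi(s)\,(s\con w_0)\,ds$ (or a discrete analogue) with the right norm estimate — since this is where the genuine content of Feichtinger-type minimality lives, and one must be careful that the resulting $\seg\apg$-valued integral really represents $v$ as an element of $\apg$ (weak convergence in $\apg$ plus absolute convergence in $\seg\apg$, matched via the continuous inclusion). A secondary, more technical point is verifying that $\fM_p(K)$, for $K$ compact, is contained in (not merely densely spanned inside) whatever ideal the reconstruction argument is phrased for; here one leans on the explicit description $\fM_p(K)=\ran P_K$ and $\supp\fM_p(K)\subset\overline{K^{-1}K}$ from Theorem \ref{theo:mpkideal}, so that a single plateau function $w_0$ works for all of $\fM_p(K)$ at once. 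Once these are in hand, the passage from $\fM_p(K)\subset\seg\apg$ to $\sopg=\ran Q_K\subset\seg\apg$ is the routine absolute-summability argument sketched above.
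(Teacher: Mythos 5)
Your overall architecture is the same as the paper's: realise $\sopg$ as $\ran Q_\fI$ for a compactly supported ideal $\fI$, show $\fI\subset\seg\apg$ with a norm bound, and then push an absolutely convergent sum of translates $\sum_s s\con u_s$ into $\seg\apg$ using uniform boundedness of translations, matching the two sums through the continuous inclusion $\seg\apg\hookrightarrow\apg$. (Two small points there: the uniform bound $\sup_t\norm{t\con u}_{\seg\ap}\leq C\norm{u}_{\seg\ap}$ is not the stated hypothesis but follows from it via the uniform boundedness principle, each translation operator being bounded on $\seg\apg$ by a closed graph argument; and $\ran Q_K$ is the set of absolutely convergent sums $\sum_s s\con u_s$, not the $\apg$-closed span of translates.) The genuine gap is in the step you yourself flag as the crux, namely the containment $\fM_p(K)\subset\seg\apg$. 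Your proposed reconstruction $v=\int\psi(s)\,(s\con w_0)\,ds$ is just $\psi\con w_0$, and convolutions (or $\ell^1$-sums of translates) of a \emph{single} window $w_0$ do not exhaust the compactly supported elements of $\apg$: already for $p=2$, $G=\Ree$, one would need $\hat{v}/\hat{w}_0$ to be the transform of an $\bl^1$-function, which fails generically. This is exactly why Feichtinger-type decompositions use translates of a whole compactly supported \emph{ideal} (time--frequency shifts, in the classical picture) --- i.e.\ the map $Q_\fI$ itself --- rather than translates of one function. Moreover your fallback, ``any compactly supported $u\in\apg$ lying in a translation-closed Segal algebra can be obtained by integrating\dots'', is circular: membership of $v$ in $\seg\apg$ is precisely what is to be proved.

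The repair is much simpler, and it is what the paper does: since $\seg\apg$ is a dense ideal in $\apg$, it contains every compactly supported element of $\apg$ (\cite[Cor.\ 1.4]{spronk}); no translation hypotheses enter at this stage. In your own setup, once you have the plateau function $w_0\in\seg\apg$ with $w_0\equiv 1$ on a neighbourhood of $\overline{K^{-1}K}$ --- and note that its existence is itself the nontrivial kernel here, requiring regularity plus a local-inversion argument, not mere density of $\seg\apg$ in $\apg$ --- you need no reconstruction at all: for $v\in\fM_p(K)$ one has $v=v\,w_0\in\apg\cdot\seg\apg\subset\seg\apg$, with $\norm{v}_{\seg\ap}\leq M\norm{v}_{\apg}\norm{w_0}_{\seg\ap}\lesssim\norm{v}_{\fM_p(K)}$ by the module bound and the contractive inclusion $\fM_p(K)\hookrightarrow\apg$. (The paper in fact runs the argument with a compactly supported closed ideal such as $\mathrm{A}_p^K(G)$ and gets the norm bound from the closed graph theorem; this is legitimate because Theorem \ref{theo:main}(ii) makes $\sopg$, as a set, independent of the generating ideal.) With that step corrected, your summation argument completes the proof as in the paper.
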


\begin{proof}
By the uniform boundedness principle, the boundedness of translations
means that $\sup_{t\in G}\norm{t\ast u}_{\seg\ap}\leq C\norm{u}_{\seg\ap}$
for some constant $C$.
The assumption that $\seg\apg$ is a dense ideal in $\apg$ implies that
$\seg\apg$ contains all compactly supported elements in $\apg$; see \cite[Cor.\ 1.4]{spronk}.
Hence any compactly supported closed ideal $\fI$ of $\apg$ is contained in $\seg\apg$.
Consider $\sopg=\ran Q_\fI$.    Then for $u=\sum_{t\in G} t\ast v_t$ in $\sopg$, where
$\sum_{t\in G}\norm{v_t}_{\ap}<\infty$, we have
\[
\sum_{t\in G} \norm{t\ast v_t}_{\seg\ap}\leq C\sum_{t\in G} \norm{v_t}_{\seg\ap}
\leq CC'\sum_{t\in G} \norm{v_t}_{\ap}<\infty
\]
where $C'$ is the norm of the inclusion $\seg\apg\hookrightarrow\apg$.
Hence $\sopg\subset\seg\apg$.
\end{proof}

\subsection{The $p$-Feichtinger algebra as a Segal algebra in $\bl^1(G)$}
As before, we shall always regard $\bl^1(G)$ as a $p$-operator space by
assigning it the ``maximal operator space structure on $L^p$", as in \cite{oztops}.

A $p$-operator space $\fV$ acting on $L^p$ is a {\em completely contractive $G$-module} if there is
a unital left action of $G$ on $\fV$, $(s,v)\mapsto s\ast v$, which is continuous
on $G$ for each fixed $v$ in $\fV$ and completely contractive (hence completely
isometric) for each fixed $s$ in $G$.  Thus, for each $n$, the integrated action
$f\mapsto [f\ast v_{ij}]$ is contractive, for each $[v_{ij}]$ in $\mat_n(\fV)$,
thus completely contractive.  But then by (\ref{eq:matmult}), $\fV$ is
a completely contractive $\bl^1(G)$-module.

A Segal algebra $\seg^1(G)$ in $\bl^1(G)$ is called {\em pseudo-symmetric}
if it is closed under the group action of right translations --- $t\cdot f(s)=f(st)$
for a suitable function $f$, $t$ in $G$ and a.e.\ $s$ in $G$ --- 
and we have $t\mapsto t\cdot f:G\to\seg^1(G)$ is continuous for
$f$ in $\seg^1(G)$.   We remark, in passing, that $\seg^1(G)$ is {\em symmetric} if, moreover,
the anti-action of convolution from the right --- $f\ast t=\Del(t)t^{-1}\cdot f$ --- is an isometric
on $\seg^1(G)$.  

Consider the space $\seg^1\apg=\bl^1(G)\cap\apg$.  We assign it an operator
space structure by the diagonal embedding in the direct sum
$u\mapsto (u,u):\seg^1\apg\to (\bl^1(G)\oplus_{\ell^1}\apg)_D$.  Since each of these spaces
has the dual operator space structure, $\seg^1\apg$ injects completely contractively
into either of $\bl^1(G)$ or $\apg$.  
%The  injection
%$f\mapsto \lam_p^G(f^\star):\bl^1(G)\to\pmpg$ is contractive, hence
%completely contractive by \cite[Thm.\ 2.5]{oztops}, 
%where $f^\star=\frac{1}{\Del_G}\check{f}$.  It is simple to see that
%$\lam_p^G(f)^*=\lam_{p'}(f^\star)$.  We observe that
%\[
%f\ast(\xi\ast\check{\eta})=\langle \lam_{p'}^G(f)\xi,\lam_p^G(\cdot)\eta\rangle
%=\langle \xi,\lam_p^G(f^\star)\lam_p^G(\cdot)\eta\rangle=(\xi\ast\check{\eta})\cdot\lam_p^G(f^\star)
%\]
%where $u\cdot T$ denotes the right predual action of $T$ in $\pmpg$ on $u$ in $\apg$.  Thus,
%based on the discussion is Section \ref{ssec:segalalg}, we see that
%$\apg$ is a completely contractive $\bl^1(G)$-module.  
%Hence $(\bl^1(G)\oplus_{\ell^1}\apg)_D$
%is a completely contractive module, and $\seg^1\apg$ is a closed submodule.
This space is obviously a completely contractive $G$-module, with left translation
action $(s,u)\mapsto s\ast u$.  Hence the discussion above provides that it is
a completley contractive $\bl^1(G)$-module.
It follows that $\seg^1\apg$ is a contractive $p$-operator Segal algebra in $\bl^1(G)$.
Moreover, the injection $\apg\hookrightarrow\bl^\infty(G)$ is completely contractive,
since $\bl^\infty(G)$ has the minimal $p$-operator space structure,
which allows $\bl^1(G)$, via the predual action of multiplication by $\bl^\infty(G)$,
to be viewed as a completely contractive $\apg$-module.  Hence, 
$(\bl^1(G)\oplus_{\ell^1}\apg)_D$ is a completely contractive $\apg$-module
with $\seg^1\apg$ a closed submodule.  Thus $\seg^1\apg$ is a contractive
$p$-operator Segal algebra in $\apg$.  We call $\seg^1\apg$ the
{\em $p$-Lebesgue--Fig\`{a}-Talamanca--Herz algebra} on $G$.
The case $p=2$ is studied intensely in \cite{forrestsw}.

\begin{theorem}\label{theo:seginLone}
{\bf (i)} $\sopg$ is a pseudo-symmetric $p$-operator Segal algebra in $\bl^1(G)$.

{\bf (ii)} Given any compactly supported (weakly) $p$-operator Segal ideal $\fI$ in $\apg$,
the map $Q_\fI':\bl^1(G)\hat{\otimes}^p\fI\to\sopg$, $Q_\fI'(f\otimes u)=f\ast u$,
is a weakly complete surjection.
\end{theorem}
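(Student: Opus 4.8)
The plan is to carry out the whole argument with $\fI=\fM_p(K)$ for a fixed compact $K$ of positive Haar measure, which loses nothing: by Theorem~\ref{theo:main}(ii) $\sopg$ and its module structures are, up to complete isomorphism, independent of the compactly supported (weakly) $p$-operator Segal ideal used, and the general case of (ii) follows from this one verbatim once one observes that $u=Q_\fI(\del_e\otimes u)$ places $\fI$ contractively inside $\sopg$ (indeed $\fI\hookrightarrow(\ran Q_K)_Q$ is a complete contraction, as is $\id\colon(\ran Q_K)_Q\to\sopg$). First I would record that $\sopg=(\ran Q_K)_D$ acts on $L^p$ — it is $\kappa$-embedded completely isometrically in the dual $p$-operator space $((\ran Q_K)_Q)^{**}$ (Proposition~\ref{prop:kappainj}(ii)) — and that left translation $\alpha_t\colon u\mapsto t\ast u$ is a complete isometry of $\sopg$, being $\del_s\otimes u\mapsto\del_{ts}\otimes u$ pushed through the complete quotient $Q_K$ and the passage to $(\cdot)_D$. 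The one point needing work is that $t\mapsto\alpha_t u$ is norm-continuous $G\to\sopg$ for $u\in\sopg$: by uniform boundedness of $\{\alpha_t\}$ and density in $\sopg$ of the finite sums $\sum_k t_k\ast u_k$ with $u_k\in\fI$, one reduces to $u\in\fI$, takes a local unit $e_0\in\sopg$ for a neighbourhood of $\supp u$ via \cite[Cor.~1.5]{spronk} (a finite sum of translates of elements of $\fI$), and for $s$ near the identity writes $s\ast u-u=e_0(s\ast u-u)$, so that $\norm{s\ast u-u}_{\sopg}\leq C\norm{e_0}_{\sopg}\norm{s\ast u-u}_{\apg}\to 0$ using that $\sopg$ is a completely contractive $\apg$-module (Theorem~\ref{theo:main}(i)) and that translation is continuous on $\apg$. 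Hence $\sopg$ is a continuous $G$-module by complete isometries and, by the observation preceding the statement, a completely contractive $\bl^1(G)$-module with module map $m'$; consequently $Q_\fI'=m'\circ(\id_{\bl^1(G)}\otimes\iota_\fI)$ (with $\iota_\fI\colon\fI\hookrightarrow\sopg$ the above contraction) is well defined and weakly completely bounded.

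For part (ii), by Corollary~\ref{cor:wcsurj}(ii) it suffices to show $\nmat_\infty(Q_\fI')$ is onto $\nmat_\infty(\sopg)$. Since $Q_K\colon\ell^1(G)\pptens\fI\to(\ran Q_K)_Q$ is a complete quotient and $\id\colon(\ran Q_K)_Q\to\sopg$ is a weakly complete quotient (its adjoint is the identity of the common dual), $Q_K\colon\ell^1(G)\pptens\fI\to\sopg$ is a weakly complete quotient, so $\nmat_\infty(Q_K)$ is a quotient map (Lemma~\ref{lem:nmatmaps}). Fix $[v_{ij}]\in\nmat_\infty(\sopg)$ and write $[v_{ij}]=\nmat_\infty(Q_K)(\til v)$, $\til v=\sum_{s\in G}\del_s\otimes[\til v_{ij,s}]$ with $\sum_{s\in G}\norm{[\til v_{ij,s}]}_{\nmat_\infty(\fI)}<\infty$. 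The amplified translations act strongly continuously on $\nmat_\infty(\sopg)$ (again reduce to elementary tensors and the scalar case), so $\nmat_\infty(\sopg)$ is an essential left $\bl^1(G)$-module and, by the Cohen--Hewitt factorisation theorem, $\bl^1(G)\ast\nmat_\infty(\sopg)=\nmat_\infty(\sopg)$. For $f\in\bl^1(G)$ the absolutely convergent identity
\[
[f\ast v_{ij}]=\Bigl[\sum_{s\in G}(f\ast\del_s)\ast\til v_{ij,s}\Bigr]=\nmat_\infty(Q_\fI')\Bigl(\sum_{s\in G}(f\ast\del_s)\otimes[\til v_{ij,s}]\Bigr),
\]
valid since $\sum_{s\in G}\norm{f\ast\del_s}_1\norm{[\til v_{ij,s}]}_{\nmat_\infty(\fI)}=\norm{f}_1\sum_{s\in G}\norm{[\til v_{ij,s}]}_{\nmat_\infty(\fI)}<\infty$, puts $f\ast[v_{ij}]$ in $\ran\nmat_\infty(Q_\fI')$; hence $\nmat_\infty(\sopg)=\bl^1(G)\ast\nmat_\infty(\sopg)\subseteq\ran\nmat_\infty(Q_\fI')$, which gives (ii).

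For part (i): completeness and the completely contractive $\bl^1(G)$-module structure are in hand. Density holds because $\sopg$, being a Segal algebra in $\apg$ (Theorem~\ref{theo:main}), contains every compactly supported element of $\apg$ by \cite[Cor.~1.4]{spronk}, and $\apg\cap C_c(G)$ is dense in $\bl^1(G)$ — approximate $h\in C_c(G)$ uniformly by elements of $\apg$ and multiply by a fixed $\phi\in\apg\cap C_c(G)$ equal to $1$ on $\supp h$. For the inclusion $\iota\colon\sopg\hookrightarrow\bl^1(G)$: one checks that $j_0\colon\fM_p(K)\hookrightarrow\bl^1(G)$ is completely bounded — its adjoint is the norm-convergent weak integral $g\mapsto\int_G g(s)\lam_p^K(s)\,ds\in\fV_p(K)$, finite because $\lam_p^K$ vanishes off a compact set — so $R:=m_{\bl^1(G)}\circ(\id_{\bl^1(G)}\otimes j_0)\colon\bl^1(G)\pptens\fI\to\bl^1(G)$ is completely bounded; since $R=\iota\circ Q_\fI'$ and $Q_\fI'$ is a weakly complete surjection by (ii), $\iota$ is weakly completely bounded, hence completely bounded because both $\sopg$ and $\bl^1(G)$ act on $L^p$. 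For pseudo-symmetry, right translation $\rho_t\colon u\mapsto u(\,\cdot\,t)$ satisfies $\rho_t(s\ast v)=s\ast\rho_t v$ and carries $\fM_p(K)$ boundedly into $\fM_p(Kt^{-1})$ (right-translate the $\bl^p(K)$-factor of $\nmat^p(K)$); for $t$ in a compact $L$ one has $Kt^{-1}\subseteq KL^{-1}$ and $\ran Q_{\fM_p(KL^{-1})}=\sopg$ completely isomorphically by Theorem~\ref{theo:main}(ii), whence $\rho_t\colon\sopg\to\sopg$ is bounded uniformly over $t\in L$ and, by the usual density and uniform-boundedness reduction, continuous in $t$.

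The main obstacle, I expect, is making the Cohen--Hewitt/factorisation argument respect the infinite-matrix structure — this needs strong continuity of the amplified translations on $\nmat_\infty(\sopg)$ — together with keeping careful track of the $(\cdot)_Q$ versus $(\cdot)_D$ structures so that $Q_\fI'$ genuinely remains a weakly complete quotient after descending modulo its kernel; the passage from weakly completely bounded to completely bounded for the inclusion $\iota$ likewise leans on $\bl^1(G)$ acting on $L^p$.
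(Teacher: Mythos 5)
Your argument is correct, and its overall skeleton matches the paper's: establish continuous, completely isometric translations on $\sopg$, invoke the discussion preceding the theorem to get the completely contractive $\bl^1(G)$-module structure, prove the inclusion into $\bl^1(G)$ is weakly completely bounded and upgrade it using the fact that both spaces act on $L^p$, and obtain (ii) by a factorisation argument. The differences are in execution. For continuity of translations the paper argues in one line via the (continuous, completely isometric) actions on $\ell^1(G)\pptens\apg\cong\ell^1(G,\apg)$ with $\fI=\mathrm{A}_p^K(G)$, whereas you use local units from \cite[Cor.\ 1.5]{spronk} together with the $\apg$-module bound; your route is longer but more self-contained and makes the continuity genuinely transparent. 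For the inclusion $\sopg\hookrightarrow\bl^1(G)$ the paper chooses $\fI$ to be a compactly supported closed ideal of $\seg^1\apg=\bl^1(G)\cap\apg$ and re-runs the diagram of Remark \ref{rem:wccsa} with $\bl^1(G)$ in place of $\apg$, while you show directly that $\fM_p(K)\hookrightarrow\bl^1(G)$ is weakly completely bounded by exhibiting its adjoint as the compactly supported weak integral $g\mapsto\int_G g(s)\lam_p^K(s)\,ds$ and then factor through $Q_\fI'$ using (ii); both work, the paper's version avoiding the min-structure matrix estimate on $\bl^\infty(G)$ that your adjoint computation implicitly needs, yours avoiding the detour through $\seg^1\apg$. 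For (ii) the paper simply refers to the proof of \cite[Cor.\ 2.4 (ii)]{spronk}; your Cohen--Hewitt factorisation on $\nmat_\infty(\sopg)$, combined with rewriting the absolutely convergent series through $Q_\fI'$ and Corollary \ref{cor:wcsurj}(ii), is essentially a worked-out version of that cited argument and is the right mechanism. The only points you gloss are minor: the reduction of continuity to $u\in\fI$ needs the conjugation identity $s\ast(t\ast u)=t\ast((t^{-1}st)\ast u)$ or the observation that the local-unit trick applies to any compactly supported element of $\sopg$; the continuity of right translations deserves the same local-unit treatment rather than just ``the usual reduction''; and the general-$\fI$ case of (ii) should note explicitly that Theorem \ref{theo:main}(ii) makes $\nmat_\infty(Q_\fI)$ surjective onto $\nmat_\infty(\sopg)$ so the factorisation lands in $\ran\nmat_\infty(Q_\fI')$ for that same $\fI$. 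None of these affects the validity of the proof.
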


\begin{proof}
{\bf (i)}  It is standard that the actions of left and right translation are continuous isometries
on $\apg$, hence the actions $s\ast(\del_t\otimes u)=\del_t\otimes s\ast u$ and
$s\cdot(\del_t\otimes u)=\del_t\otimes s\cdot u$  on 
$\ell^1(G)\pptens\apg=\ell^1(G)\otimes^\gam\apg\cong\ell^1(G,\apg)$ are easily
seen to be continuous and completely isometric.  Thus if we choose $\fI=\mathrm{A}_p^K(G)$, for
a compact set $K$ with non-empty interior, we see that $\sopg=(\ran Q_\fI)_D$ has continuous
isometric translations by $G$.  From the discussion above, we see that $\sopg$
is a completely bounded $\bl^1(G)$-module

If we let $\fI$ be any compactly supported closed ideal in the pointwise algebra $\seg^1\apg$,
then $\fI$ is a contractive $p$-operator Segal ideal in $\apg$.  Then, just as 
the last part of Remark \ref{rem:wccsa}, we prove that the inclusion
$\iota:(\ran Q_\fI)_Q\hookrightarrow\bl^1(G)$ is weakly completely contractive,
and hence contractive as $\bl^1(G)$ acts on $L^p$.

{\bf (ii)}  If we follow the proof of \cite[Cor.\ 2.4 (ii)]{spronk}, we find that
$(\ran Q_\fI')_Q=(\ran Q_\fI)_Q$ weakly completely isomorphically.
\end{proof}

We do not know how to obtain a complete surjection in (ii) above.

The next result requires no operator space structure.  It is the dual result to
Theorem \ref{theo:mininapg}.  It is new, even for the case of $p=2$, for 
non-abelian $G$.

\begin{theorem}\label{theo:mininLone}
If $\seg^1(G)$ is any Segal algebra in $\bl^1(G)$ for which the pointwise
multiplication satisfies 
$\apg\cdot\seg^1(G)\subset\seg^1(G)$, then $\seg^1(G)\supset\sopg$.
\end{theorem}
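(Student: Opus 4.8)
The plan is to reduce this to Theorem \ref{theo:mininapg} by showing that any Segal algebra $\seg^1(G) \subset \bl^1(G)$ with $\apg \cdot \seg^1(G) \subset \seg^1(G)$ actually \emph{sits inside} $\apg$ after pointwise multiplication, and carries the translation-invariance hypotheses needed there. The key observation is the standard fact that for $1 < p < \infty$, pointwise multiplication gives $\bl^1(G) \cdot \apg \subset \apg$ — more precisely, $f \cdot u \in \apg$ whenever $f \in \bl^1(G)$ and $u \in \apg$, because $\apg$ is a Banach $\bl^1(G)$-module under pointwise multiplication (equivalently, $\pmpg \subset \fB(\bl^p(G))$ and the relevant duality is bounded). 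Combined with the hypothesis $\apg \cdot \seg^1(G) \subset \seg^1(G)$, this suggests working with $\apg \cdot \seg^1(G)$, which lies in $\seg^1(G) \cap \apg = \seg^1\apg$.

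First I would establish that $\seg^1(G)$ contains all compactly supported elements of $\bl^1(G)$ that are also in $\apg$; indeed, since $\seg^1(G)$ is a dense ideal in $\bl^1(G)$ it contains $C_c(G)$-type functions, and in particular, picking a compactly supported $u_0 \in \apg$ with $u_0 \equiv 1$ on a neighborhood (available by regularity of $\apg$, cf. \cite[Cor.\ 1.5]{spronk}), the hypothesis $\apg \cdot \seg^1(G) \subset \seg^1(G)$ lets us multiply freely. Next, I would show $\seg^1\apg \subset \seg^1(G)$: given $w \in \seg^1\apg = \bl^1(G) \cap \apg$, approximate using a bounded approximate identity for the pointwise action — more concretely, write $w$ as a limit, or even pick $w$ compactly supported first and use $w = u_0 \cdot w$ with $u_0 \in \apg$ compactly supported and $w \in \seg^1(G)$, which forces $w \in \apg \cdot \seg^1(G) \subset \seg^1(G)$. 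Then a closure/density argument over compactly supported elements (which are dense in $\seg^1\apg$) finishes $\seg^1\apg \subset \seg^1(G)$. The norm bookkeeping here is routine: the inclusion $\seg^1\apg \hookrightarrow \seg^1(G)$ is bounded by the closed graph theorem or by tracking the module norm $\norm{u_0 \cdot w}_{\seg^1(G)} \leq C\norm{u_0}_{\apg}\norm{w}_{\seg^1(G)}$.

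Once $\seg^1\apg \subset \seg^1(G)$, I would invoke the fact — established in the previous subsection of this paper — that $\seg^1\apg$ (the $p$-Lebesgue--Fig\`a-Talamanca--Herz algebra) is a Segal algebra in $\apg$; in particular it is a translation-invariant subalgebra of $\apg$ on which left translations are continuous and bounded (isometric, in fact). But then $\sopg \subset \seg^1\apg$ by Theorem \ref{theo:mininapg} (applied with $\seg\apg = \seg^1\apg$), since $\seg^1\apg$ meets all three bulleted hypotheses there. Chaining the inclusions, $\sopg \subset \seg^1\apg \subset \seg^1(G)$, which is the claim.

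The main obstacle, I expect, is the step $\seg^1\apg \subset \seg^1(G)$: it is tempting but incorrect to think $\apg \cdot \seg^1(G)$ is all of $\seg^1\apg$ on the nose, so one must argue via compactly supported elements and the existence within $\apg$ of compactly supported idempotent-like functions acting as local units (regularity and the Ditkin/Tauberian properties of $\apg$, $1<p<\infty$). The rest — boundedness of the inclusions, density of compactly supported functions — is standard Segal-algebra manipulation. No $p$-operator space structure is needed anywhere, consistent with the remark preceding the theorem.
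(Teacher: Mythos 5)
Your route has a fatal problem at its core. First, the ``key observation'' that $\apg$ is an $\bl^1(G)$-module under \emph{pointwise} multiplication is false: $\apg$ is an $\bl^1(G)$-module under convolution, but if $f\in\bl^1(G)$ is compactly supported and rough (not even continuous) and $u\in\apg$ is identically $1$ on $\supp f$, then $f\cdot u=f\notin\apg$. Hence $\apg\cdot\seg^1(G)\not\subset\apg$ in general, and the reduction to $\seg^1\apg=\bl^1(G)\cap\apg$ already breaks down. Second, the pivotal inclusion $\seg^1\apg\subset\seg^1(G)$ is argued circularly: the factorisation $w=u_0\cdot w$ ``with $w\in\seg^1(G)$'' assumes exactly what is to be proved, and nothing in the hypotheses guarantees that $\seg^1(G)$ contains prescribed compactly supported functions (a Segal algebra in $\bl^1(G)$ need not contain $\fC_c(G)$, and the local-unit argument of \cite[Cor.\ 1.4]{spronk} is an argument about ideals in $\apg$, where pointwise multiplication \emph{is} the module action, not about convolution ideals in $\bl^1(G)$). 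Worse, the inclusion $\seg^1\apg\subset\seg^1(G)$ is not merely unproved but false in the generality you need: $\seg^1(G)=\sopg$ itself satisfies the hypothesis of the theorem (it is a Segal algebra in $\bl^1(G)$ by Theorem \ref{theo:seginLone} and an ideal in $\apg$ by Theorem \ref{theo:main}), so your claim would force $\seg^1\apg\subset\sopg$, hence $\sopg=\seg^1\apg$; but already for $p=2$ and $G=\Ree$ the Feichtinger algebra $\mathrm{S}_0^2(\Ree)$ is strictly smaller than $\bl^1(\Ree)\cap\mathrm{A}_2(\Ree)$. So the approach cannot be patched; only the very last link of your chain, $\sopg\subset\seg^1\apg$ via Theorem \ref{theo:mininapg}, is sound.

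The actual proof goes through convolution rather than trying to push $\seg^1(G)$ into $\apg$. Choose a compactly supported ideal $\fI$ in $\apg$ containing a function identically $1$ on a suitable compact set, so that $\fI\cdot\seg^1(G)\neq\{0\}$; by hypothesis $\fI\cdot\seg^1(G)\subset\seg^1(G)$, and its elements are compactly supported integrable functions. Pseudo-symmetry of $\sopg$ (Theorem \ref{theo:seginLone}(i)) allows one to write, for $u\in\sopg$ and $f\in\fI\cdot\seg^1(G)$,
\[
u\ast f=\int_G u\ast t\, f(t)\,dt
\]
as a Bochner integral in $\sopg$, so $\sopg\ast(\fI\cdot\seg^1(G))\subset\sopg$, while it also lies in $\seg^1(G)$ since $\seg^1(G)$ is a convolution ideal; a bounded approximate identity of $\bl^1(G)$ inside $\sopg$ shows this set is nonzero. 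Thus $\sopg\cap\seg^1(G)$ is a nonzero Segal algebra in $\apg$ with bounded continuous left translations, and Theorem \ref{theo:mininapg} yields $\sopg\subset\sopg\cap\seg^1(G)\subset\seg^1(G)$. The moral difference: the hypothesis $\apg\cdot\seg^1(G)\subset\seg^1(G)$ is exploited only to manufacture \emph{some} nonzero compactly supported elements of $\seg^1(G)$ to convolve against, never to move all of $\bl^1(G)\cap\apg$ into $\seg^1(G)$.
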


\begin{proof}
Given any compact set $K\subset G$ we can arrange a compactly supported ideal
$\fI$ in $\apg$ which contains a function which is identically $1$ on $K$.
Hence we can arrange such an ideal $\fI$ for which $\fI\cdot\seg^1(G)\not=\{0\}$.
Our assumption on $\seg^1(G)$ provides that $\fI\cdot\seg^1(G)\subset\seg^1(G)$.
Since $\sopg$ is pseudo-symmetric we have that $\sopg\ast(\fI\cdot\seg^1(G))\subset\sopg$.
Indeed, if $u\in\sopg$ and $f\in \fI\cdot\seg^1(G)$, then
\[
u\con f=\int_Gu\con t\, f(t)\,dt
\]
which may be regarded as a Bochner integral in $\sopg$, as
\[
\norm{u\con f}_{\sop}\leq\sup_{t\in\supp\fI}\norm{u\con t}_{\sop}\int_{\supp\fI}|f(t)|\,dt
<\infty.
\]
Hence $u\con f\in\sopg$.  Also $\sopg\ast(\fI\cdot\seg^1(G))\subset
\sopg\ast\seg^1(G)\subset\seg^1(G)$.  Using the fact that $\sopg$ contains a bounded approximate
identity for $\bl^1(G)$, we see that $\{0\}\not=\sopg\ast(\fI\cdot\seg^1(G))$.  Thus we see that
$\{0\}\not=\sopg\ast(\fI\cdot\seg^1(G))\subset \sopg\cap\seg^1(G)$.  In particular
$\sopg\cap\seg^1(G)$ is a Segal algebra in $\apg$ which, being a Segal algebra in $\bl^1(G)$,
satisfies the conditions of Theorem \ref{theo:mininapg}.  Hence 
$\seg^1(G)\supset\sopg\cap\seg^1(G)\supset\sopg$.
\end{proof}

\section{Functiorial Properties}

\subsection{Tensor products}\label{ssec:tensorproducts}
The realisation of the tensor product formula is, perhaps, the most significant reason
to consider the operator space structure on $\sopg$.  In Proposition \ref{prop:tensorprod},
we improved on the formula \cite[Thm.\ 7.3]{daws} only mildly, i.e.\ we acheived
that $\apg\pptens\ap(H)\cong\ap(G\times H)$ weakly isometrically when
$G$ and $H$ are amenable, say, rather than simply isometrically.
Hence we should not expect, at the present time, to do better with $\sopg\pptens\sop(H)$.

In \cite[Thm.\ 3.1]{spronk}, the injectivity of $\mathrm{PM}_2(G)$ for almost connected $G$
was put to good use.  Lacking any such property for $p\not=2$, we are forced
to return to the special ideals of Section \ref{ssec:specialideals}.

\begin{theorem}\label{theo:tensprod}
The map $u\otimes v\mapsto u\times v:\sopg\pptens\sop(H)\to
\sop(G\times H)$ is a weakly complete surjection.  It is a bijection
whenever the extended map $\apg\pptens\ap(H)\to\ap(G\times H)$
is a bijection.
\end{theorem}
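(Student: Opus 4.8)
The plan is to mimic the strategy used for $\apg$ in Proposition \ref{prop:tensorprod}, but with the special ideals $\fM_p(K)$ playing the role that injectivity of $\mathrm{PM}_2$ played in \cite{spronk}. First I would fix non-null compact sets $K\subset G$ and $L\subset H$ and realise, by Theorem \ref{theo:main}(i), $\sopg=(\ran Q_K)_D$, $\sop(H)=(\ran Q_L)_D$, and $\sop(G\times H)=(\ran Q_{K\times L})_D$, where $Q_K=Q_{\fM_p(K)}$ etc., since $K\times L$ is a non-null compact subset of $G\times H$. By Proposition \ref{prop:wcqtp} it suffices throughout to work with the quotient structures $(\ran Q_K)_Q$, as passing to $\hat{\otimes}^p$ and to duals is insensitive to the choice between $Q$- and $D$-structures for the purposes of weakly completely bounded maps. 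The key identity to exploit is the concrete description from the proof of Theorem \ref{theo:main}(i), namely ${(\ran Q_K)_Q}^*=(\ker Q_K)^\perp=\overline{\spn\eps_K(G)}^{w*}\subset\ell^\infty(G,\fV_p(K))$, together with the tensor identifications $\fV_p(K)\bar{\otimes}\fV_p(L)=\fV_p(K\times L)$ from Lemma \ref{lem:vpspaces}(iii) and $\ell^\infty(G,\fV_p(K))\bar{\otimes}\ell^\infty(H,\fV_p(L))\cong\ell^\infty(G\times H,\fV_p(K\times L))$ obtained via Proposition \ref{prop:bwtensb}, Corollary \ref{cor:bwtensb1}, and \eqref{eq:ellinftytens}.

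Next I would assemble a commuting diagram analogous to the one in the proof of Proposition \ref{prop:tensorprod}: across the top sits $I\colon\nmat^p(G)\pptens\nmat^p(H)\cong\nmat^p(G\times H)$ (Proposition \ref{prop:nucoptp}, a weakly complete isometry), down the left side $Q_K\otimes Q_L$ and down the right $Q_{K\times L}$, and across the bottom the map $J(u\otimes v)=u\times v$ from $(\ran Q_K)_Q\pptens(\ran Q_L)_Q$ to $(\ran Q_{K\times L})_Q$. Here, however, $\sopg$ is built as $\ell^1(G)\pptens\fM_p(K)$ modulo the kernel of $Q_K$, so the cleaner route is to run the diagram at the level of $\ell^1(G)\pptens\nmat^p(K)$: one checks on elementary tensors $\del_s\otimes\omega\otimes\del_t\otimes\omega'$ that $Q_{K\times L}\circ(\text{shuffle})\circ(\id\otimes I_{K,L}\otimes\id)=J\circ(Q_K\otimes Q_L)$, where $I_{K,L}\colon\nmat^p(K)\pptens\nmat^p(L)\cong\nmat^p(K\times L)$ is the restriction of the isometry $I$ (legitimate by Proposition \ref{prop:nucsubinj}), and the shuffle is the canonical weakly complete isometry $(\ell^1(G)\pptens\nmat^p(K))\pptens(\ell^1(H)\pptens\nmat^p(L))\cong(\ell^1(G)\otimes^\gamma\ell^1(H))\pptens(\nmat^p(K)\pptens\nmat^p(L))$ followed by $\ell^1(G)\otimes^\gamma\ell^1(H)\cong\ell^1(G\times H)$. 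The kernel-containment argument of \cite[Prop.\ 4.10]{daws} as used in Proposition \ref{prop:tensorprod} then forces the induced map $J$ to be a weakly complete quotient map; but since $\sopg$ is defined by the quotient, not by $\nmat^p(K)$ directly, I instead deduce surjectivity of $\nmat_\infty(J)$ from surjectivity of $\nmat_\infty$ applied to the outer maps, and invoke Corollary \ref{cor:wcsurj}(ii) to conclude $J$ is a weakly complete surjection. (One subtlety: unlike the $\apg$ case, $Q_K$ is not literally $P_K$, so I would use that $\nmat_\infty(Q_K)$ is surjective by construction of $(\ran Q_K)_Q$ as a complete quotient together with the infinite-matrix criterion from the end of Section \ref{ssec:popsp}.)

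For the final sentence — bijectivity of $J$ whenever $\apg\pptens\ap(H)\to\ap(G\times H)$ is a bijection — I would argue that $\sopg\pptens\sop(H)$ embeds in $\apg\pptens\ap(H)$ compatibly with $J$ and the map of Proposition \ref{prop:tensorprod}. Concretely, the inclusions $\sopg\hookrightarrow\apg$ and $\sop(H)\hookrightarrow\ap(H)$ are (weakly) completely contractive by Theorem \ref{theo:main}, hence $\sopg\pptens\sop(H)\to\apg\pptens\ap(H)$ is bounded, and since $\sopg$ is dense in $\apg$ its range is dense; chasing a diagram with the two Gelfand/restriction maps to $\ap(G\times H)$ (which agree with the actual product of functions on the dense algebraic tensor product) shows that any $\omega$ in the kernel of $J$ maps into the kernel of $\apg\pptens\ap(H)\to\ap(G\times H)$, which is trivial by hypothesis; injectivity of the embedding $\sopg\pptens\sop(H)\hookrightarrow\apg\pptens\ap(H)$ — which follows because $J$ being a surjection onto the semisimple $\sop(G\times H)$ forces $\sopg\pptens\sop(H)$ to be semisimple, as in the Tomiyama-type argument of Proposition \ref{prop:tensorprod} — then gives $\omega=0$. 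Combined with the surjectivity already established, $J$ is a bijection.

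The main obstacle I anticipate is making the diagram genuinely commute at the operator-space level when $\sopg$ is presented via $\ell^1(G)\pptens\fM_p(K)$ rather than via a single nuclear-operator space: one must carefully reconcile the ``two-layer'' quotient ($\nmat^p(K)\twoheadrightarrow\fM_p(K)$ inside $\ell^1(G)\pptens\fM_p(K)\twoheadrightarrow\sopg$) with the product structure on $G\times H$, and verify that the various shuffle and restriction maps are not merely bounded but weakly completely isometric so that Lemma \ref{lem:nmatmaps} and Corollary \ref{cor:wcsurj} apply. A secondary, more routine, difficulty is checking that $\nmat_\infty$ of a composition of quotient maps is again surjective — this needs the open mapping / uniform-boundedness-below remark at the end of Section \ref{ssec:popsp}, applied to each factor.
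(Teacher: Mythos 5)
Your surjectivity half is essentially sound and stays within the paper's toolkit, though organised slightly differently: the paper proceeds in two stages, first showing that $j\colon\fM_p(K)\pptens\fM_p(L)\to\fM_p(K\times L)$ is a weakly complete quotient map (by intersecting the kernel containment $\ker P_G\otimes P_H\subset\ker P_{G\times H}\circ I$ with the complemented copy $\nmat^p(K)\pptens\nmat^p(L)$), and then running a second diagram with $S=(\id\otimes j)\circ(\id\otimes\Sigma\otimes\id)$ over $Q_K\otimes Q_L$ and $Q_{K\times L}$; you instead run one diagram at the $\ell^1\pptens\nmat^p$ level (where you must insert the quotients $\id\otimes P_K$, $\id\otimes P_L$, $\id\otimes P_{K\times L}$ that your displayed identity omits) and conclude via surjectivity of $\nmat_\infty$ together with Corollary \ref{cor:wcsurj}(ii) --- the same device the paper uses in Theorem \ref{theo:restriction} --- which is legitimate once the kernel containments have made $\til{\jmath}$ weakly completely bounded, as you note.

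The bijectivity half has a genuine gap. You reduce injectivity of $\til{\jmath}$ to injectivity of the canonical map $\sopg\pptens\sop(H)\to\apg\pptens\ap(H)$, and you justify the latter by asserting that a surjection onto the semisimple algebra $\sop(G\times H)$ forces $\sopg\pptens\sop(H)$ to be semisimple. That implication is false (every commutative Banach algebra surjects onto its semisimple quotient by the radical), and the argument is in fact circular: by the Tomiyama spectrum computation, $\til{\jmath}$ is essentially the Gelfand map of $\sopg\pptens\sop(H)$, so semisimplicity of that algebra is exactly equivalent to the injectivity of $\til{\jmath}$ you are trying to prove. Injectivity of $\sopg\pptens\sop(H)\to\apg\pptens\ap(H)$ is, moreover, precisely the sort of statement the paper is careful not to claim (compare its remarks after Theorem \ref{theo:tensprod} on whether $\apg\otimes^\gam\ap(H)\to\apg\pptens\ap(H)$ is injective). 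The paper avoids the issue by staying at the level of the defining quotient maps: injectivity of the extended map $J\colon\apg\pptens\ap(H)\to\ap(G\times H)$ first gives injectivity of the ideal-level map $j$ (again by restricting the kernel comparison to $\nmat^p(K)\pptens\nmat^p(L)$), and then the identity $J\circ(Q_K\otimes Q_L)=Q_{K\times L}\circ S$, checked on elementary tensors and extended by continuity, yields $\ker(Q_K\otimes Q_L)=\ker(Q_{K\times L}\circ S)$, whence $\til{\jmath}$ is injective; no embedding of $\sopg\pptens\sop(H)$ into $\apg\pptens\ap(H)$ is ever invoked. You should rework your final step along these lines.
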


\begin{proof}
We fix non-null compact subsets $K$ in $G$ and $L$ in $H$.  Consider, first,
the following commutating diagram, where all ideals $\fM_p$ and all algebras $\ap$
have the quotient or dual operator space structure, which we know to be weakly
completely isomorphic to one another.
\[\xymatrix{
\nmat^p(K)\pptens\nmat^p(L) \ar[rr]^i \ar[dd]_{P_K\otimes P_L} \ar@{_{(}->}[dr]
& & \nmat^p(K\times L) \ar@{_{(}->}[dr] \ar'[d][dd]^{P_{K\times L}} & \\
& \nmat^p(G)\pptens\nmat^p(H) \ar[rr]^{I\phantom{mm}} \ar[dd]_<<<<<<{P_G\otimes P_H} & &
\nmat^p(G\times H) \ar[dd]^{P_{G\times H}} \\
\fM_p(K)\pptens\fM_p(L) \ar'[r][rr]_<<<<j \ar[dr]_{\iota_K\otimes\iota_L} & &
\fM_p(K\times L) \ar@{_{(}->}[dr]_{\iota_{K\times L}} & \\
& \apg\pptens\ap(H) \ar[rr]_J & & \ap(G\times H)
}\]
Here weakly complete isometries
$i$ and $I$ are from (\ref{eq:nucoptp}), $j$ and $J$ are given on their respective
domains by $u\otimes v\mapsto u\times v$, and $\iota_K$, $\iota_L$ and $\iota_{K\times L}$
are completely contractive inclusion maps.  The diagonal 
inclusion maps on the top are complete isometries
thanks to Proposition \ref{prop:nucsubinj}.

The inclusion $\ker P_G\otimes P_H\subset \ker P_{G\times H}\circ I$ is noted in the proof
of Proposition \ref{prop:tensorprod}.  Thus we have
\begin{align*}
\ker P_K\otimes P_L&=(\ker P_G\otimes P_H)\cap\left(\nmat^p(K)\pptens\nmat^p(L)\right)  \\
&\subset (\ker P_{G\times H}\circ I)\cap\left(\nmat^p(K)\pptens\nmat^p(L)\right) =\ker P_{K\times L}\circ i.
\end{align*}
Thus $j:\fM_p(K)\pptens\fM_p(L)\to \fM_p(K\times L)$ is a weakly complete quotient map.
Moreover, we see from this that $j$ is injective provided that $J$ is injective.

Now consider the diagram
\[\xymatrix{
\ell^1(G)\pptens\fM_p(K)\pptens\ell^1(H)\pptens\fM_p(L)\ar[r]^<<<<S\ar[d]^{Q_K\otimes Q_L}
&\ell^1(G\times H)\pptens\fM_p(K\times L)\ar[d]_{Q_{K\times L}} \\
\sopg\pptens\sop(H) \ar[r]^{\til{\jmath}} &\sop(G\times H)
}\]
where $S=(\id\otimes j)\circ(\id\otimes\Sigma\otimes \id)$, so $S$ is a complete quotient map,
and $\til{\jmath}(u\otimes v)=u\times v$.  If we consider $\sopg=\ran Q_K$,
then we know that $\sopg_Q=\sopg_D$ weakly completely isometrically.  
Since $Q_K\otimes Q_L$ and $Q_{K\times L}$ are complete quotient maps, 
and 
\[
\ker Q_K\otimes \ell^1(H)\otimes \fM_p(L),\;\ell^1(G)\otimes\fM_p(K)\otimes\ker Q_L
\subset \ker Q_{K\times L}\circ S
\]
it follows that $\til{\jmath}$ is a complete quotient map in this diagram.  Now if $J$
is injective, and thus so too is $j$, we note that 
\[
J\circ Q_K\otimes Q_L=
Q_{K\times L}\circ S :\ell^1(G)\otimes\fM_p(K)\otimes\ell^1(H)\otimes\fM_p(L) 
\to \ap(G\times H).
\]
Thus, by taking closures, it follows that $\ker Q_K\otimes Q_L=\ker Q_{K\times L}\circ S$.
Hence $\til{\jmath}$ is injective in this case.

If we consider $\sopg=\ran Q_\fI$ for an arbitrary compactly supported $p$-operator 
Segal ideal, then we 
obtain that $\til{\jmath}$ is a weakly complete surjection or weakly complete isomorphism, depending
on injectivity of $J$.
\end{proof}

%A straighforward modification of the proof above, with $H$ playing the role of $L$, hence
%$\ap(H)$ the role of $\fM_p(L)$, etc., yields the following result.  Recall that the definition of the
%ideal $\fM_p(K\times H)$ makes no requirment that $K\times H$ be compact.

%\begin{corollary}\label{cor:tensprod1}
%The map $u\otimes v\mapsto u\times v:\sopg\pptens\ap(H)\to
%\fM_p(G\times H)$ is a complete surjection.  It is a complete bijection
%whenever the extended map $\apg\pptens\ap(H)\to\ap(G\times H)$
%is a bijection.
%\end{corollary}

We remark that the identity operator on $\apg\otimes\ap(H)$ extends to a contraction
$\apg\otimes^\gam\ap(H)\to\apg\pptens\ap(H)$.  We cannot guarantee that this map is
injective.  In the case that $p=2$,% and neither of $G$ nor $H$ are not almost abelian,
we do not know if this map is injective, unless one of the component Fourier algebras
has the approximation property; say in the case that one of $G$ or $H$ is abelain or compact.
When $p\not=2$, then even if both $G$ and $H$ are abelain, we still do not know if this map
is injective.   In the second case, the map is unlikely to be surjective.   However, we have no proof.
If $G$ is discrete, it is trivial to verify that $\sopg=\ell^1(G)$ completely isomorphically; indeed
choose $\fI=\Cee 1_{\{e\}}$ in the construction of $\sopg$.  In this case we have
$\sopg\pptens\sop(H)=\sopg\otimes^\gam\sop(H)$.  It seems likely that this is the only 
situation in which this tensor formula holds.

\subsection{Restriction to subgroups}\label{ssec:restriction}
Let $H$ be a closed subgroup of $G$.  We briefly recall that the restriction map
$u\mapsto u|_H:\mathrm{A}_2(G)\to\mathrm{A}_2(H)$ is a complete quotient map
since its adjoint is a certain $*$-homomorphism $\mathrm{PM}_2(H)\hookrightarrow
\mathrm{PM}_2(G)$, hence a complete isometry.  When $p\not=2$, the fact that there
is a natural complete isometry $\pmpg\hookrightarrow\pmp(H)$ is not automatic, and must be
verified.  Thankfully, \cite{derighetti} provides a proof which is easily modifiable for our needs.

Fix a Bruhat function $\beta$ on $G$ (\cite[Def.\ 8.1.19]{reiters}) and let
$q(x)=\int_H\beta(xh)\frac{\Del_G(h)}{\Del_H(h)}\,dh$ for $x$ in $G$ for a fixed Haar measure
on $H$.  Then, by \cite[(8.2.2)]{reiters}, there exists a quasi-invariant integral 
$\int_{G/H}\dots dxH$, such that we have an invariant integral on $G$,
given for $f\in\fC_c(G)$ by
\begin{equation}\label{eq:haarintegral}
\int_G f(x)\, dx = \int_{G/H}\int_H\frac{f(xh)}{q(xh)}\,dh\,dxH.
\end{equation}
By dominated convergence, this formula will hold for any compactly supported integrable function $f$.
%We observe that $q$, above, can be replaced by $qt$ for any $t$ in $G$ where $qt(x)=q(tx)$,
%the achieve the same Haar integral.  Indeed, this is an immediate consequence of
%left invariance of $\int_G \dots dx$.

We will use the isometry $U=U_G:\bl^p(G)\to\bl^p(G)$, given by $Uf=\check{f}\frac{1}{\Del_G^{1/p}}$
which satisfies $U^{-1}=U$ and $U\lam_p^G(\cdot)U=\rho_p^G$, where $\rho_G$ is the right
regular representation given by $\rho_p^G(s)f(t)=f(st)\frac{1}{\Del_G(t)^{1/p}}$.
Thus if $\cvp'(G)=\{T\in\fB(\bl^p(G)):T\lam_p^G(\cdot)=\lam_p^G(\cdot)T\}$, we have
$U\cvpg U=\cvp'(G)$.  The map $T\mapsto UTU:\cvp(G)\to\cvp'(G)$ is evidently a 
weak*-continuous complete isometry for which $U\pmpg U=\overline{\spn\rho_p^G(G)}^{w*}$.

The following is a modification of \cite{derighetti} and \cite{delaported}.  See
also the treatment in \cite{derighettiB}.
We let $R_H:\apg\to\ap(H)$ denote the restriction map.  Its existence was first established
in \cite{herz2}, with a simplified proof given in \cite{delaported}.  

\begin{theorem}\label{theo:cvpinjection}
There is a complete isometry $\iota:\cvp(H)\to\cvpg$ such that $\iota|_{\pmp(H)}=R_H^*$.
In particular, $R_H:\apg\to\ap(H)$ is a weakly complete quotient map and a complete contraction.
\end{theorem}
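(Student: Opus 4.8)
The plan is to construct the map $\iota:\cvp(H)\to\cvpg$ explicitly and verify it is a weak*-continuous complete isometry, following the Derighetti--Delaporte-style argument but keeping track of the $p$-operator space (indeed $p$-matrix) norms. First I would use the integration formula (\ref{eq:haarintegral}) together with the Bruhat function $\beta$ and the weight $q$ to build an isometric embedding of $\bl^p(H)$ into a ``fibered'' $\bl^p$-space over $G$ --- concretely, one disintegrates $\bl^p(G)$ over $G/H$ so that, using the quasi-invariant measure, $\bl^p(G)\cong\bl^p(G/H)\otimes^p\bl^p(H)$ (after the pointwise rescaling by $q^{-1/p}$ that makes the fibers genuine copies of $\bl^p(H,dh)$). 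Under this identification $\rho_p^H$ acts on the $\bl^p(H)$-fiber, and one defines $\iota_0(T)=I_{\bl^p(G/H)}\otimes T$ on $\cvp'(H)$, then conjugates back by $U_G$ and $U_H$ to land in $\fB(\bl^p(G))$. The computation of \cite{derighetti}/\cite{delaported} --- which I would cite rather than redo --- shows $\iota_0$ carries $\cvp'(H)$ into $\cvp'(G)$ and that on pseudomeasures it agrees with $R_H^*$ (one checks this on the dense span of $\rho_p^H(H)$, where it reduces to the identity (\ref{eq:haarintegral}) applied to matrix coefficients).

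The operator-space upgrade is the new content. Since $\bl^p(G)\cong\bl^p(G/H)\otimes^p\bl^p(H)$ isometrically, for each $n$ we get $\ell^p_n\otimes^p\bl^p(G)\cong\bl^p(G/H)\otimes^p(\ell^p_n\otimes^p\bl^p(H))$, and under this the amplification $\iota^{(n)}$ is again of the form ``ampliate by the identity on the $\bl^p(G/H)$ leg.'' Because ampliating a bounded operator on $\bl^p(H)$ by an identity operator on another $\bl^p$-space does not change its norm (this is exactly the identification $\mat_n(\fB(\bl^p(\nu)))\cong\fB(\bl^p(\nu)\otimes^p\ell^p_n)$ recorded in Section \ref{ssec:popsp}, applied with $\nu$ a fibered measure), each $\iota^{(n)}$ is an isometry; hence $\iota$ is a complete isometry. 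Weak*-continuity follows since $\iota$ is the adjoint of a bounded map (or directly, since it is ampliation by an identity, which is weak*-weak* continuous on $\fB(\bl^p(H))\cong\nmat^p(H)^*$).

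It remains to derive the two conclusions about $R_H$. Once we know $R_H^*=\iota|_{\pmp(H)}$ and $\iota$ is a complete isometry, $R_H^*$ is a complete isometry on $\pmp(H)=(\ap(H)_Q)^*$, so by definition $R_H$ is a weakly complete quotient map. For the complete contraction statement, I would argue as with $P_G$ (and as in \cite[Lem.\ 4.6]{daws}): realize $R_H$ on the level of the defining quotients $P_G$, $P_H$ of the nuclear spaces --- restriction of a matrix coefficient $\xi\ast\check\eta$ of $G$ to $H$ is, via (\ref{eq:haarintegral}), itself a sum of matrix coefficients of $H$ coming from the isometric fiber decomposition, so $R_H$ lifts to a complete contraction $\nmat^p(G)\to\nmat^p(H)$ --- and then $R_H:\apg_Q\to\ap(H)_Q$ is a complete contraction, hence so is $R_H:\apg_D\to\ap(H)_D$ by Proposition \ref{prop:kappainj}(i).

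The main obstacle I expect is the first step: setting up the fibered decomposition $\bl^p(G)\cong\bl^p(G/H)\otimes^p\bl^p(H)$ correctly, i.e.\ checking that the quasi-invariant measure and the weight $q^{-1/p}$ conspire to make this an honest \emph{isometric} $\bl^p$-tensor identification intertwining $\rho_p^H$ with $I\otimes\rho_p^H$, and that $\iota_0$ genuinely lands in the convolvers. This is precisely where \cite{derighetti,delaported,derighettiB} do the work, so the burden is to import their computation verbatim and then observe --- which is easy --- that it is manifestly ``ampliation by an identity,'' whence completely isometric. The $p$-operator space bookkeeping after that point is routine given the machinery of Section \ref{ssec:popsp}.
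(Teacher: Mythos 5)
Your strategy differs from the paper's at the key step. The paper never introduces a tensor factorisation of $\bl^p(G)$: it defines $\iota(T)$ directly by the coset-integral formula built from (\ref{eq:haarintegral}), proves complete contractivity by a matricial H\"{o}lder estimate on columns $[\vphi_i],[\psi_j]\in\fC_c(G)^n$, and obtains the lower bound by rerunning Derighetti's argument verbatim with columns in place of scalars; the identification $\iota|_{\pmp(H)}=R_H^*$ is then quoted from Delaporte--Derighetti, and the complete contraction of $R_H$ comes from the factorisation of $R_H$ through its bidual as in (\ref{eq:Sfactorise}). Your route --- exhibiting $\iota$ as conjugation of the ampliation $T\mapsto I\otimes T$ by an isometric isomorphism $\bl^p(G)\cong\bl^p(G/H)\otimes^p\bl^p(H)$ --- is conceptually attractive and would indeed give the complete isometry for free, and your identification of the intertwining (the relation $q(xs)=q(x)\Del_H(s)/\Del_G(s)$ for $s\in H$ is what makes the modular factors match) is correct. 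But be aware that the trivialisation itself is \emph{not} in \cite{derighetti,delaported,derighettiB}: those authors work with the coset-wise integral precisely to avoid choosing a cross-section of $G\to G/H$. To make your first step honest you need a measurable, locally bounded cross-section (Feldman--Greenleaf in general; Mackey in the second countable case) and a check that the resulting weighted map $V$ is a surjective isometry, so this is an additional ingredient, not an import of the cited computation. The paper's matricial rerun of Derighetti is longer but avoids this entirely.

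There is also a genuine gap in your derivation of the complete contraction of $R_H$. You assert that restriction lifts to a \emph{complete} contraction $\nmat^p(G)\to\nmat^p(H)$; this lift is a partial trace over the $G/H$ leg, and the only thing your argument gives is that its adjoint $T\mapsto I\otimes T$ is a complete isometry, i.e.\ that the lift is \emph{weakly} completely contractive. Whether such slice/identification maps on the nuclear spaces are completely (rather than weakly completely) bounded is exactly the kind of question the paper cannot settle --- compare the discussion of (\ref{eq:nucoptp}), which is only known to be a weak complete isometry --- so this step cannot be waved through. Fortunately it is unnecessary: once $R_H^*=\iota|_{\pmp(H)}$ is a complete isometry, Proposition \ref{prop:kappainj}(i) applied twice and the factorisation $R_H=\hat{\kappa}_{\ap(H)}\circ R_H^{**}\circ\kappa_{\apg}$ give directly that $R_H$ is a complete contraction for the dual structures, which is the statement claimed (and is how the paper concludes). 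With that replacement, and with the cross-section point supplied, your argument goes through.
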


\begin{proof}
In order to make use of (\ref{eq:haarintegral}) as stated, we replace $\cvpg$ with
$\cvp'(G)$ and $\lam_p^G$ with $\rho_p^G$.

For a function $\vphi$ on $G$ and $x$ in $G$ we let $\vphi x(s)=\vphi(xs)$.
Now for $T$ in $\cvp'(H)$ and $\vphi$ and $\psi$ in $\fC_c(G)$, we observe that
the function
\[
x\mapsto\left\langle \left.\left(\frac{\psi}{q^{1/{p'}}}\right)x\right|_H,
T\left[\left(\left.\frac{\vphi}{q^{1/p}}\right)x\right|_H\right]\right\rangle
=\int_H \frac{\psi(xh)}{q(xh)^{1/{p'}}}T\left[\left(\left.\frac{\vphi}{q^{1/p}}\right)x\right|_H\right](h)\, dh
\]
is constant on cosets $xH$, since $T\in\cvp'(H)$.  Thus we may define $\iota(T)$ by
\[
\langle\psi,\iota(T)\vphi\rangle
=\int_{G/H}\left\langle\left.\left(\frac{\psi}{q^{1/{p'}}}\right)x\right|_H,
T\left[\left(\left.\frac{\vphi}{q^{1/p}}\right)x\right|_H\right]\right\rangle \, dxH.
\]
The fact that $|\langle\psi,\iota(T)\vphi\rangle|\leq 
\norm{\psi}_{\bl^{p'}(G)}\norm{T}_{\fB(\bl^p(H))}\norm{\vphi}_{\bl^p(G)}$ will
follow from a computation below.  Hence $\iota(T)$ defines a bounded
operator on $\bl^p(G)$.  Let us consider $[T_{ij}]$ in $\mat_n(\cvp'(H))$.  We observe that
for $[\vphi_i]$ and $[\psi_j]$ in the column space $\fC_c(G)^n$ that an application of
H\"{o}lder's inequality  and the usual operator norm inequality
%fact that
%$\norm{
% [T_{ij}]\left[\left(\left.\frac{\vphi_i}{q^{1/p}}\right)x\right|_H\right]}_{\ell^{p}(n,\bl^{p}(H))}
% \leq \norm{[T_{ij}]}_{\mat_n(\fB(\bl^p(H)))}
% \norm{\left[\left(\left.\frac{\vphi_i}{q^{1/p}}\right)x\right|_H\right]}_{\ell^{p}(n,\bl^{p}(H))}$
give
\begin{align*}
&\left|\bigl\langle [\psi_j],\iota(T)^{(n)}[T_{ij}][\vphi_i]\bigr\rangle\right|
=\left|\int_{G/H}\left\langle \left[\left.\left(\frac{\psi_j}{q^{1/{p'}}}\right)x\right|_H\right],
[T_{ij}]\left[\left(\left.\frac{\vphi_i}{q^{1/p}}\right)x\right|_H\right]\right\rangle \, dxH\right| \\
&\qquad\leq \left(\int_{G/H}\norm{
\left[\left.\left(\frac{\psi_j}{q^{1/{p'}}}\right)x\right|_H\right]}_{\ell^{p'}(n,\bl^{p'}(H))}^{p'}
dxH\right)^{1/p'} \\
&\qquad\qquad\qquad\qquad \left(\int_{G/H}\norm{
 [T_{ij}]\left[\left(\left.\frac{\vphi_i}{q^{1/p}}\right)x\right|_H\right]}_{\ell^{p}(n,\bl^{p}(H))}^{p}
dxH\right)^{1/p} \\
&\qquad\leq 
\left(\sum_{j=1}^n \int_{G/H}\int_H \frac{|\psi_j(xh)|^{p'}}{q(xh)}\, dh\, dxH\right)^{1/p'} \\
&\qquad\qquad\qquad\qquad\norm{[T_{ij}]}_{\mat_n(\fB(\bl^p(H)))} 
\left(\sum_{i=1}^n \int_{G/H}\int_H \frac{|\vphi_i(xh)|^p}{q(xh)}\, dh\, dxH\right)^{1/p} \\
&\qquad =\norm{[\psi_j]}_{\ell^{p'}(n,\bl^{p'}(G))}\norm{[T_{ij}]}_{\mat_n(\fB(\bl^p(H)))}
\norm{[\vphi_i]}_{\ell^p(n,\bl^p(G))}.
\end{align*}
This shows that $\norm{\iota^{(n)}[T_{ij}]}_{\mat_n(\fB(\bl^p(G)))}\leq
\norm{[T_{ij}]}_{\mat_n(\fB(\bl^p(G)))}$, hence $\iota:\cvp'(H)\to\fB(\bl^p(G))$
is a complete contraction. 

From this point, the proof of \cite[pps.\ 73--75]{derighetti} 
or of \cite[\S7.3, Theo.\ 2]{derighettiB} can be followed nearly
verbatim, with $[\psi_j]$ and $[\vphi_i]$ in $\fC_c(G)^n$ replacing
$\psi$ and $\vphi$, and the norms from $\ell^{p'}(n,\bl^{p'}(G))$ and
$\ell^p(n,\bl^p(G))$ on these respective columns in place of usual scalar norms; and $T$ in
$\cvp'(H)$ playing the role of $\rho_p^G(\mu)$ (which is denoted $\lam_G^p(\mu)$
by that author).  Hence
we have that $\iota:\cvp'(H)\to\fB(\bl^p(G))$ is a complete isometry.

It is shown in \cite[\S 7.1 Theo.\ 13]{derighettiB} that $\iota(T)\in\cvp'(G)$.

Accepting differences between our notation and theirs, it
is shown in both \cite{delaported} and \cite[\S7.8, Theo.\ 4]{derighettiB} that
$R_H^*=\iota|_{U_H\pmp(H)U_H}(\cdot)$.  Thus it follows from Lemma 
\ref{lem:nmatmaps} that $R_H:\apg\to\ap(H)$
is a weakly complete quotient map, with either quotient or, thanks to 
Proposition \ref{prop:dualspace}, dual operator space structure.  It follows from
the factorisation $R_H
=\hat{\kappa}_{\ap(H)}\circ(\iota(U_H\cdot U_H))^{**}\circ\kappa_{\apg}$
that $R_H$ is a complete contraction  (with dual operator space structure).  

To get the ``left" version, as in our statment of theorem, we simply replace
$\iota$ by $U_G\iota(U_H\cdot U_H)U_G$.
\end{proof}

%For $\vphi$ and $\psi$ in $\fC_c(G)$, $t$ in $G$ and $T$ in $\cvp'(H)$,
%we use  the Haar integral formula (\ref{eq:haarintegral}) and the comments following it
%to compute that
%\begin{align*}
%\langle \psi,\iota(T)\lam_p^G(t)\vphi\rangle 
%&= \int_{G/H}\int_H \frac{\psi(xh)}{q(xh)^{1/{p'}}}
%T\left[\left(\left.\frac{\lam_p^G(t)\vphi}{q^{1/p}}\right)x\right|_H\right](h) \, dh\,dxH \\
%&= \int_{G/H}\int_H  \frac{\psi(xh)}{q(xh)^{1/{p'}}}
%T\left[\left(\left.\frac{\vphi}{(qt)^{1/p}}\right)t^{-1}x\right|_H\right](h) \, dh\,dxH \\
%&= \int_{G/H}\int_H \frac{\psi(txh)}{q(txh)^{1/{p'}}}
%T\left[\left(\left.\frac{\vphi}{(qt)^{1/p}}\right)x\right|_H\right](h) \, dh\,dxH \\
%&=\int_{G/H}\left\langle\left.\left(\frac{\lam_{p'}(t^{-1})\psi}{(qt)^{1/{p'}}}\right)x\right|_H,
%T\left[\left(\left.\frac{\vphi}{(qt)^{1/p}}\right)x\right|_H\right]\right\rangle \, dxH \\
%&=\langle \lam_{p'}^G(t^{-1})\psi,\iota(T)\vphi\rangle=\langle\psi,\lam_p^G(t)\iota(T)\vphi\rangle.
%\end{align*}
%Hence $\iota(T)\in\cvp'(G)$.

Unfortunately, we cannot determine if $R_H:\apg\to\ap(H)$ is a complete quotient map, even
when both groups are amenable.  

The class of ideals $\fM_p(K)$ of
Section \ref{ssec:specialideals} will play a special role in obtaining a restriction theorem
on the $p$-Feichtinger algebra.
We let $(\fM_p(K)|_H)_Q$ denote $\fM_p(K)|_H$ with the operator space making this space
a complete quotient of $\fM_p(K)$ via $R_H$.  We then place on $\fM_p(K)|_H$ the dual
operator space structure, i.e.\  $\fM_p(K)|_H=(\fM_p(K)|_H)_D$.

\begin{lemma}\label{lem:idealresttoH}
Let $K$ be a nonnull closed set in $G$.  Then $(\fM_p(K)|_H)_Q=(\fM_p(K)_H)_D$
weakly completely isometrically.  Moreover, $\fM_p(K)|_H=(\fM_p(K)|_H)_D$ is
a contractive operator Segal ideal in $\ap(H)$.
\end{lemma}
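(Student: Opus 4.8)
The plan is to treat the two assertions separately, in each case reducing to facts already available for $\fM_p(K)$ and for the restriction map $R_H$.

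For the weakly completely isometric identification I would show that the dual of $(\fM_p(K)|_H)_Q$ acts weak* on $L^p$ and then invoke Proposition \ref{prop:dualspace}. By construction $(\fM_p(K)|_H)_Q$ is the complete quotient $\fM_p(K)/\ker(R_H|_{\fM_p(K)})$, so by quotient--subspace duality its dual embeds weak*-continuously and completely isometrically onto the annihilator $\ker(R_H|_{\fM_p(K)})^\perp$ inside $\fM_p(K)^*=\fV_p(K)\subset\fB(\bl^p(K))$. Since the pairing of $\fM_p(K)$ with $\fV_p(K)$ is point evaluation against the operators $\lam_p^K(t)$, one checks that $\ker(R_H|_{\fM_p(K)})=(\spn\lam_p^K(H))_\perp$, so the bipolar theorem identifies the annihilator with $\overline{\spn\lam_p^K(H)}^{w*}$ --- a weak*-closed subspace of $\fB(\bl^p(K))$. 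Hence $(\fM_p(K)|_H)^*$ acts weak* on $L^p$ and Proposition \ref{prop:dualspace} gives that $\id\colon(\fM_p(K)|_H)_Q\to(\fM_p(K)|_H)_D$ is a weakly complete isometry.

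For the second assertion the inclusion is the routine part. By Lemma \ref{lem:vpspaces}(ii) the embedding $\fM_p(K)\hookrightarrow\apg$ is a complete contraction, and by Theorem \ref{theo:cvpinjection} so is $R_H\colon\apg\to\ap(H)$; hence $R_H|_{\fM_p(K)}\colon\fM_p(K)\to\ap(H)$ is a complete contraction, which factors as $\fM_p(K)\twoheadrightarrow(\fM_p(K)|_H)_Q\hookrightarrow\ap(H)$ through the defining complete quotient map. Since $\pcbnorm{T\circ q}=\pcbnorm{T}$ whenever $q$ is a complete quotient map, the inclusion $(\fM_p(K)|_H)_Q\hookrightarrow\ap(H)$ is a complete contraction; as $\ap(H)=\ap(H)_D$ acts on $L^p$, Corollary \ref{cor:kappainj1} upgrades this to the statement that $\fM_p(K)|_H=(\fM_p(K)|_H)_D\hookrightarrow\ap(H)$ is a complete contraction.

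The module structure is where I expect the main difficulty, precisely because $R_H$ is only known to be a \emph{weakly} complete quotient map, so the completely contractive $\apg$-module structure of $\fM_p(K)$ cannot simply be transported to an $\ap(H)$-module structure. The plan is to define pointwise multiplication $m'\colon\ap(H)\pptens(\fM_p(K)|_H)\to\fM_p(K)|_H$ and to reach its complete contractivity through adjoints, mimicking Theorems \ref{theo:mpkideal} and \ref{theo:main}. Since $(uw)|_H=(u|_H)(w|_H)$, the kernel $\ker(R_H|_{\fM_p(K)})$ is an $\apg$-submodule of $\fM_p(K)$, so the module multiplication of $\fM_p(K)$ descends along the complete quotient $\apg\pptens\fM_p(K)\to\apg\pptens(\fM_p(K)|_H)_Q$ (projectivity of $\pptens$) to a completely contractive $\apg$-module map $\bar m\colon\apg\pptens(\fM_p(K)|_H)_Q\to(\fM_p(K)|_H)_Q$, and this $\apg$-action factors through restriction: $\bar m=m'\circ(R_H\otimes\id)$. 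Now $R_H\otimes\id$ is a weakly complete quotient map by Theorem \ref{theo:cvpinjection} and Proposition \ref{prop:wcqtp}, so $(R_H\otimes\id)^*$ is a complete isometry; together with $(\bar m)^*=(R_H\otimes\id)^*\circ(m')^*$ and the complete contractivity of $(\bar m)^*$, this forces $(m')^*$ to be a complete contraction. Finally, writing $m'=\hat\kappa_{\fM_p(K)|_H}\circ(m')^{**}\circ\kappa$ as in (\ref{eq:Sfactorise}) --- with $\hat\kappa_{\fM_p(K)|_H}$ a complete isometry since $(\fM_p(K)|_H)_D$ acts on $L^p$ (Proposition \ref{prop:kappainj}(iii)), $\kappa$ a complete contraction, and $(m')^{**}$ a complete contraction because $(m')^*$ is (Proposition \ref{prop:kappainj}(i)) --- shows $m'$ is completely contractive. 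Combined with the inclusion bound, this establishes that $\fM_p(K)|_H$ is a contractive $p$-operator Segal ideal in $\ap(H)$; it is an ideal and not a Segal algebra because $\supp\fM_p(K)|_H\subset\overline{K^{-1}K}\cap H$ is typically not all of $H$.
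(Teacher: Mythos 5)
Your first paragraph is essentially the paper's own argument: the dual of $(\fM_p(K)|_H)_Q$ is identified, via the bipolar theorem, with the weak*-closed subspace $\overline{\spn\lam_p^K(H)}^{w*}$ of $\fB(\bl^p(K))$ (the paper writes this space as $\fV_p^H(K)=\overline{1_K\iota(\pmp(H))1_K}^{w*}$, which is the same thing), and Proposition \ref{prop:dualspace} finishes; your route to the complete contractivity of the inclusion into $\ap(H)$ is also fine. The genuine gap is in the module step, at the sentence asserting that the $\apg$-action factors through restriction, $\bar m=m'\circ(R_H\otimes\id)$. The existence of a \emph{bounded} map $m'$ on $\ap(H)\pptens(\fM_p(K)|_H)$ satisfying this identity is essentially what you are trying to prove, so you cannot yet speak of $(m')^*$; and the factorisation does not follow from a universal property, because $R_H$ is only known to be a weakly complete quotient map, whence $R_H\otimes\id$ is only a weakly complete quotient (Proposition \ref{prop:wcqtp}), not a complete quotient. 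Concretely, $\ker(R_H\otimes\id)={}^\perp\bigl(\ran(R_H\otimes\id)^*\bigr)$ may be strictly larger than the closed span of $\ker R_H\otimes(\fM_p(K)|_H)$: a completely bounded map from $\apg$ into $\fV_p^H(K)$ which annihilates $\ker R_H$ induces a map on $\ap(H)$ that is completely bounded for the quotient structure coming from $R_H$, but not visibly for the operator space structure of $\ap(H)$. So knowing that $\bar m$ kills elementary tensors $a\otimes x$ with $a|_H=0$ does not let you descend $\bar m$ to $\ap(H)\pptens(\fM_p(K)|_H)$, and the chain $(\bar m)^*=(R_H\otimes\id)^*\circ(m')^*$ is not yet available.

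The step is repairable, but it needs an extra idea. Staying inside your framework: check on elementary tensors that $\bar m^*(\lam_p^K(h))=\bigl(\lam_p^H(h)\otimes\lam_p^K(h)\bigr)\circ(R_H\otimes\id)$ for $h\in H$ (the functional $\lam_p^H(h)\otimes\lam_p^K(h)$ is bounded on $\ap(H)\pptens(\fM_p(K)|_H)$ because $\pptens$ dominates the injective norm), so $\bar m^*$ maps $\spn\lam_p^K(H)$ into $\ran(R_H\otimes\id)^*$; since $\bar m^*$ is weak*-continuous and $\spn\lam_p^K(H)$ is weak*-dense in $\fV_p^H(K)$ by your first paragraph, it follows that $\bar m$ annihilates all of $\ker(R_H\otimes\id)$. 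As $(R_H\otimes\id)^*$ is an isometry and the domain is complete, $R_H\otimes\id$ is a Banach-space quotient map, so $\bar m$ now induces a bounded $m'$, and only at this point does your adjoint computation show $(m')^*$ is a complete contraction, after which the factorisation (\ref{eq:Sfactorise}) applies as you say. The paper avoids this difficulty entirely by never transporting the $\apg$-action: it constructs the adjoint of multiplication directly, adapting Theorem \ref{theo:mpkideal} --- using that $\fV_p^H(K\times G)=\fV_p^H(K)\bar{\otimes}\iota(\pmp(H))$ (as in Lemma \ref{lem:vpspaces}) and that $\Gam_K(\iota(\pmp(H)))\subset\fV_p^H(K)\bar{\otimes}\iota(\pmp(H))$ by substituting $h\in H$ into (\ref{eq:gamk}) --- so the weak*-continuous complete contraction implementing pointwise multiplication by $\ap(H)$ is exhibited explicitly rather than obtained by descent along $R_H$.
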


\begin{proof}
Since $\lam_p^G(h)=\iota(\lam_p^H(h))$, for $h$ in $H$, we have that 
\[
\ker (R_H|_{\fM_p(K)})^\perp=\overline{1_K\iota(\pmp(H))1_K}^{w*}
\]
a space we hereafter denote by $\fV_p^H(K)$.   Hence $(\fM_p(K)|_H)^*
\cong \fV_p^H(K)$, and it follows from Proposition \ref{prop:dualspace}
that $(\fM_p(K)_H)_Q=(\fM_p(K)_H)_D$ weakly completely isometrically.

The proof of Lemma \ref{lem:vpspaces}
shows that $\fV_p^H(K\times G)=\fV_p^H(K)\bar{\otimes}\iota(\pmp(H))$ in
$\fB(\bl^p(G\times G))=\fB(\bl^p(G)\otimes^p\bl^p(G))$.  Moreover, substituting
$h$ in $H$ for $t$ in (\ref{eq:gamk}) we see that $\Gam_K(\iota(\pmp(H)))
\subset \fV_p^H(K)\bar{\otimes}\iota(\pmp(H))$.  
A straightforward adaptation of the proof of Theorem \ref{theo:mpkideal} shows
that $\fM_p(K)|_H=(\fM_p(K)|_H)_D$ is a contractive operator Segal ideal in $\ap(H)$.
\end{proof}

Not knowing that $R_H:\apg\to\ap(H)$ is a complete surjection, we can hardly
expect to do better for $\sopg$.

\begin{theorem}\label{theo:restriction}
The restriction map $R_H:\sopg\to\sop(H)$ is a
weakly complete surjection and is completely bounded.
\end{theorem}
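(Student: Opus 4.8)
The plan is to carry out, for $R_H$, the same reduction to the special ideals $\fM_p(K)$ of Section~\ref{ssec:specialideals} that underlies Theorems~\ref{theo:main} and~\ref{theo:tensprod}. Fix a compact neighbourhood $K$ of the identity in $G$ and put $\tilde K=\overline{\overline{K^{-1}K}\,K}$, which is again compact. By Theorem~\ref{theo:mpkideal}, $\fM_p(K)$ and $\fM_p(\tilde K)$ are compactly supported contractive $p$-operator Segal ideals in $\apg$; by Lemma~\ref{lem:idealresttoH}, $\fM_p(K)|_H$ and $\fM_p(\tilde K)|_H$ are compactly supported contractive operator Segal ideals in $\ap(H)$ (their supports lie in $\overline{K^{-1}K}\cap H$ and $\overline{\tilde K^{-1}\tilde K}\cap H$), and they are non-zero: since $K$ is a neighbourhood of $e$ it contains a symmetric neighbourhood $V$ of $e$ with $V^2\subset K$, and then $P_G(1_V\otimes 1_V)=1_V\ast\check 1_V$ is strictly positive on $V^2$, hence on $V^2\cap H$. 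By Theorem~\ref{theo:main}(ii), applied on both $G$ and $H$, we may therefore realise $\sopg=(\ran Q_K)_D$, and realise $\sop(H)$ as $(\ran Q')_D$ with $Q'\colon\ell^1(H)\pptens\fM_p(K)|_H\to\ap(H)$ or as $(\ran\tilde Q)_D$ with $\tilde Q\colon\ell^1(H)\pptens\fM_p(\tilde K)|_H\to\ap(H)$, whichever is convenient; since both properties in the statement are invariant under composition with a complete isomorphism, this ambiguity is harmless.

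For complete boundedness I would establish the commuting square
\[\xymatrix{
\ell^1(G)\pptens\fM_p(K)\ar[r]^{\ \ Q_K}\ar[d]_{\Phi} & (\ran Q_K)_Q\ar[d]^{R_H} \\
\ell^1(H)\pptens\fM_p(\tilde K)|_H\ar[r]_{\ \ \ \tilde Q} & (\ran\tilde Q)_Q
}\]
as follows. Viewing $\ell^1(G)\pptens\fM_p(K)\cong\ell^1(G,\fM_p(K))$, choose for each $s\in G$ with $s\overline{K^{-1}K}\cap H\neq\emptyset$ an element $h_s\in H\cap s\overline{K^{-1}K}$ (so that $h_s^{-1}s\in\overline{K^{-1}K}$), and put $\Phi(\del_s\otimes u)=\del_{h_s}\otimes\bigl((h_s^{-1}s)\ast u\bigr)\!\mid_H$, with $\Phi(\del_s\otimes u)=0$ otherwise; as $\ell^1(G)$ is counted discretely, no measurable selection is needed. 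Because $h_s^{-1}s$ runs over the fixed compact set $\overline{K^{-1}K}$ and $(h_s^{-1}s)K\subseteq\tilde K$, left translation $u\mapsto(h_s^{-1}s)\ast u$ carries $\fM_p(K)$ into $\fM_p(\tilde K)$, and is uniformly completely contractive there: this follows from the identity $P_{\tilde K}\circ\tau_t=L_t\circ P_K$ (with $\tau_t$ the map on nuclear operators induced by composition with the invertible isometry $\lam_p^G(t)$, and $L_t$ left translation on $\apg$), together with the fact that $\tau_t\colon\nmat^p(K)\to\nmat^p(\tilde K)$ is completely contractive by Proposition~\ref{prop:nucsubinj} and that $L_t$ is a complete isometry on $\apg$. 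Following this with the complete contraction $R_H\colon\fM_p(\tilde K)\to\fM_p(\tilde K)|_H$ of Lemma~\ref{lem:idealresttoH}, and with the contraction $\del_s\mapsto\del_{h_s}$ (or $0$) on the $\ell^1$-variable, shows that $\Phi$ is completely bounded. A computation on elementary tensors, using $h_s\in H$ and that $R_H$ intertwines left $H$-translations, gives $\tilde Q\circ\Phi(\del_s\otimes u)=h_s\ast\bigl(((h_s^{-1}s)\ast u)|_H\bigr)=\bigl(h_s\ast((h_s^{-1}s)\ast u)\bigr)|_H=(s\ast u)|_H=R_H\circ Q_K(\del_s\otimes u)$, so the square commutes. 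Since $Q_K$ is a complete quotient map, $R_H$ is completely bounded as a map $(\ran Q_K)_Q\to(\ran\tilde Q)_Q$ (and $R_H(\sopg)\subseteq\sop(H)$, as $\ran\tilde Q=\sop(H)$). Passing to the dual structures is then routine, exactly as in the proof of Theorem~\ref{theo:main}(i): the weak complete isometries $(\ran Q_K)_Q=(\ran Q_K)_D$ and $(\ran\tilde Q)_Q=(\ran\tilde Q)_D$ identify the dual operator spaces, so $R_H$ is weakly completely bounded as a map $(\ran Q_K)_D\to(\ran\tilde Q)_D$, and since the codomain acts on $L^p$ the factorisation~(\ref{eq:Sfactorise}) shows $R_H\colon\sopg\to\sop(H)$ is completely bounded.

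For the weak complete surjectivity I would follow the proof of Theorem~\ref{theo:main}(ii) nearly verbatim, now using the realisation $\sop(H)=(\ran Q')_D$ over $\fM_p(K)|_H$, and the observation (Lemma~\ref{lem:nmatmaps}, since a complete quotient map is a weakly complete quotient map) that a complete quotient map $S$ has $\nmat_\infty(S)$ a quotient map admitting $(1+\eps)$-liftings. Given $[v_{ij}]\in\nmat_\infty(\sop(H))=\nmat_\infty(\ran Q')$, lift through $\nmat_\infty(Q')$ to an element $\sum_{h\in H}\del_h\otimes[w_{ij,h}]$ of $\ell^1(H)\pptens\nmat_\infty(\fM_p(K)|_H)\cong\nmat_\infty(\ell^1(H)\pptens\fM_p(K)|_H)$ with $\sum_h\norm{[w_{ij,h}]}$ nearly minimal, and then lift each matrix $[w_{ij,h}]$ through $\nmat_\infty$ of the complete quotient map $R_H\colon\fM_p(K)\to\fM_p(K)|_H$ (a complete quotient by the very definition of $(\fM_p(K)|_H)_Q$) to $[\tilde w_{ij,h}]\in\nmat_\infty(\fM_p(K))$ with $\norm{[\tilde w_{ij,h}]}\le(1+\eps)\norm{[w_{ij,h}]}$. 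Since $H\subseteq G$, the element $\tilde u=\sum_{h\in H}\del_h\otimes[\tilde w_{ij,h}]$ lies in $\nmat_\infty(\ell^1(G)\pptens\fM_p(K))$, and because $R_H$ intertwines left $H$-translations one computes $\nmat_\infty(R_H)\bigl(\nmat_\infty(Q_K)(\tilde u)\bigr)=\nmat_\infty(Q')\bigl(\sum_h\del_h\otimes[w_{ij,h}]\bigr)=[v_{ij}]$. Hence $\nmat_\infty(R_H)$ is onto, and Corollary~\ref{cor:wcsurj}(ii) yields that $R_H$ is a weakly complete surjection.

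The main obstacle is the construction of $\Phi$ in the boundedness step: passing from a $G$-translate of an element of the small ideal $\fM_p(K)$ to an $H$-translate of an element of a single fixed compactly supported ideal in $\ap(H)$, with complete-norm control uniform in the translating parameter, is exactly what forces the enlargement $K\rightsquigarrow\tilde K$ and the combined use of Proposition~\ref{prop:nucsubinj} and the complete isometry of translations on $\apg$. The secondary difficulty, as throughout Section~\ref{ssec:construction}, is the bookkeeping between the quotient and dual $p$-operator space structures and the repeated appeal to Theorem~\ref{theo:main}; but here one only needs to follow an already established template.
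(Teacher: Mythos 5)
Your argument is correct, and the weak-complete-surjection half is essentially the paper's own: the paper encodes exactly your lifting procedure as a commuting square whose top row $\nmat_\infty(\id\otimes R_H)$ on $\nmat_\infty(\ell^1(H)\pptens\fM_p(K))$ and right column $\nmat_\infty(Q_{\fM_p(K)|_H})$ are surjective (using Lemma \ref{lem:idealresttoH} and Theorem \ref{theo:main}(ii) applied to the ideal $\fM_p(K)|_H$ of $\ap(H)$), and then appeals to Corollary \ref{cor:wcsurj}(ii), which is precisely your two-stage lift through $Q'$ and through $R_H\colon\fM_p(K)\to\fM_p(K)|_H$ followed by the inclusion $\ell^1(H)\subset\ell^1(G)$. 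Where you genuinely diverge is the containment and complete-boundedness half: the paper disposes of $R_H(\sopg)\subset\sop(H)$ by saying the first part of the $p=2$ proof in Spronk's Theorem 3.3 adapts directly, and then gets complete boundedness from the factorisation (\ref{eq:Sfactorise}) because the codomain carries the dual structure; you instead construct the intertwining map $\Phi$ with the enlarged compact set $\tilde K$, which makes that cited adaptation explicit and self-contained. The only caveat in your version is that some intermediate maps are asserted to be completely contractive when they are most readily controlled only weakly: $\Phi$ is not a tensor product of maps (the translation depends on $s$), so its amplification bounds are cleanest via the dual identification $(\ell^1(G)\pptens\fM_p(K))^*\cong\ell^\infty(G,\fV_p(K))$ or the $\ell^1$-direct-sum universal property, and likewise the uniform control of $u\mapsto t\ast u\colon\fM_p(K)\to\fM_p(\tilde K)$ is easiest through adjoints acting on $\fB(\bl^p)$. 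This does not damage the proof, since you ultimately pass to dual structures and invoke (\ref{eq:Sfactorise}), for which weak complete boundedness of $\Phi$ is all that is required; it just means the intermediate statements should be phrased in the weakly completely bounded category, in keeping with the rest of the paper.
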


\begin{proof}
The first part of the proof of \cite[Thm.\ 3.3]{spronk} can be adapted directly
to show that $R_H(\sop(G))\subset\sop(H)$.  Thus it remains to show that
$R_H$ is a weakly complete surjection.  We consider the Segal ideals $\fM_p(K)$
of $\apg$ and $\fM_p(K)|_H$ of $\ap(H)$, from the lemma above.  In particular
$R_H:\fM_p(K)\to\fM_p(K)|_H$ is a weakly complete quotient map.  Thus, he following
diagram commutes, and has surjective top row and right column.
\[\xymatrix{
\nmat_\infty(\ell^1(H)\pptens\fM_p(K)) \ar[rr]^{ \nmat_\infty(\id\otimes R_H) }
\ar[d]_{ \nmat_\infty(Q_K|_{ \ell^1(H)\pptens\fM_p(K) }) } 
& & \nmat_\infty(\ell^1(H)\pptens\fM_p(K)|_H)\ar[d]^{ \nmat_\infty(Q_{\fM_p(K)|_H})} \\
\nmat_\infty(\sopg) \ar[rr]^{\nmat_\infty(R_H)} & & \nmat_\infty(\sop(H))
}\]
Hence the bottom row is surjective, and we appeal to Corollary \ref{cor:wcsurj}
to see that $R_H:\sopg\to\sop(H)$ is weakly completely surjective.

Since we assign the dual $p$-operator space structure on the range space $\sop(H)$,
the usual argument, i.e.\ modelled after the factorisation (\ref{eq:Sfactorise}), shows
that $R_H$ is completely bounded.
\end{proof}

\subsection{Averaging over subgroups}\label{ssec:averaging}
Given a closed normal subgroup $N$ of $G$, we consider the
averaging map $\tau_N:\fC_c(G)\to\fC_c(G/N)$ given by
$\tau_N f(sN)=\int_N f(sn)\,dn$.  With appropriate scaling of Haar measures, 
$\tau_N$ extends to a homomorphic quotient map from $\bl^1(G)$ to $\bl^1(G/N)$.
We wish to study the effect of $\tau_N$ on $\sopg$.  We first require the following
result which will play a role similar to that of \cite[Prop.\ 3.5]{spronk}.

\begin{lemma}\label{lem:mpkapgnmod}
Let $K$ be a non-null closed subset of $G$.  Then $\fM_p(K)$ is a completely
contractive $\ap(G/N)$-module under pointwise product, i.e.\ the product
$uv(s)=u(s)v(sN)$ for $u$ in $\fM_p(K)$ and $v$ in $\ap(G/N)$.
\end{lemma}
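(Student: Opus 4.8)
The plan is to follow the proof of Theorem~\ref{theo:mpkideal}, replacing the group $G$ appearing in the second tensor leg by the quotient group $G/N$. Since $N$ is normal, $G/N$ is a locally compact group carrying a left Haar measure, and $G$ acts on $\bl^p(G/N)$ by isometries through the quotient homomorphism $t\mapsto tN$. First I would introduce the invertible isometry $W_K^N$ on $\bl^p(K\times G/N)$, defined analogously to $W_K$ in Theorem~\ref{theo:mpkideal} by $W_K^N\eta(s,xN)=\eta(s,sxN)$ (so $(W_K^N)^{-1}\eta(s,xN)=\eta(s,s^{-1}xN)$), which translates the $G/N$-fibre by the $K$-variable; it is genuinely isometric precisely because left translation is isometric on $\bl^p(G/N)$, which is where normality of $N$ is used. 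Then $\Gam_K^N(T)=W_K^N(T\otimes I)(W_K^N)^{-1}$ defines a weak*-continuous complete isometry $\fB(\bl^p(K))\to\fB(\bl^p(K\times G/N))=\fB(\bl^p(K)\otimes^p\bl^p(G/N))$, and the computation of \cite[p.\ 70]{daws} used for (\ref{eq:gamk}) carries over to give
\[
\Gam_K^N(\lam_p^K(t))=\lam_p^K(t)\otimes\lam_p^{G/N}(tN)=\lam_p^{K\times G/N}(t,tN)\qquad(t\in G).
\]
Since each $\lam_p^{G/N}(tN)\in\pmp(G/N)$, Lemma~\ref{lem:vpspaces}(i) and Proposition~\ref{prop:bwtensb} then give $\Gam_K^N(\fV_p(K))\subset\fV_p(K)\bar{\otimes}\pmp(G/N)$.

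Next, exactly as in the proof of Theorem~\ref{theo:mpkideal}, let $\del:\fV_p(K)\bar{\otimes}\pmp(G/N)\to(\fM_p(K)\hat{\otimes}^p\ap(G/N))^*$ be the inclusion map; it is weak*-continuous and a complete contraction via the Fubini tensor product identification $(\fM_p(K)\hat{\otimes}^p\ap(G/N))^*=\fV_p(K)\bar{\otimes}_F\pmp(G/N)\supset\fV_p(K)\bar{\otimes}\pmp(G/N)$ from \cite[Thm.\ 6.3]{daws} (see the remark after Proposition~\ref{prop:bwtensb}). The composite $\del\circ\Gam_K^N:\fV_p(K)\to(\fM_p(K)\hat{\otimes}^p\ap(G/N))^*$ is then a weak*-continuous complete contraction, hence the adjoint of a complete contraction $m:\fM_p(K)\hat{\otimes}^p\ap(G/N)\to\fM_p(K)$. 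Testing $m$ against the weak*-dense set $\spn\lam_p^K(G)\subset\fV_p(K)$ gives, for $v$ in $\fM_p(K)$ and $u$ in $\ap(G/N)$,
\[
\langle\lam_p^K(t),m(v\otimes u)\rangle=\langle\lam_p^K(t)\otimes\lam_p^{G/N}(tN),v\otimes u\rangle=v(t)u(tN),
\]
so that $m(v\otimes u)$ is precisely the pointwise product $uv$ of the statement, and in particular $uv\in\fM_p(K)$. Finally, since $\fM_p(K)$ carries its dual $p$-operator space structure and therefore acts on $L^p$ by Proposition~\ref{prop:kappainj}(iii), the factorization $m=\hat{\kappa}_{\fM_p(K)}\circ m^{**}\circ\kappa_{\fM_p(K)\hat{\otimes}^p\ap(G/N)}$ of (\ref{eq:Sfactorise}) shows that $m$ is itself completely contractive, which is the assertion.

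The main obstacle is the well-definedness and isometry of $W_K^N$: this is the one place where normality of $N$ genuinely enters, since for a non-normal closed subgroup $G/N$ would carry only a quasi-invariant measure and the fibre-wise $G$-translation on $\bl^p(G/N)$ would fail to be isometric. A secondary point is merely bookkeeping the sign and modular conventions in the $\Gam_K^N$ computation so that the second tensor leg comes out as $\lam_p^{G/N}(tN)$ paired against $\ap(G/N)$ in the standard way, and hence that the multiplication produced is the pointwise product as written; this is routine given the analogous computation behind (\ref{eq:gamk}).
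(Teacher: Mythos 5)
Your proposal is correct and is essentially the paper's own argument: the authors prove the lemma precisely by replacing $W_K$ and $\Gam_K$ from the proof of Theorem \ref{theo:mpkideal} with $W_K^{G/N}\eta(s,tN)=\eta(s,stN)$ and $\Gam_K^{G/N}(T)=W_K^{G/N}(T\otimes I)(W_K^{G/N})^{-1}$ and running the rest of that proof verbatim, which is exactly what you do. Your added remarks on where normality of $N$ enters and on the Fubini tensor product justification of the inclusion $\del$ are accurate elaborations of steps the paper leaves implicit.
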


\begin{proof}
This is a simple modification of the proof of Theorem \ref{theo:mpkideal}.  Indeed, we replace
$W_K$ in that proof by
$W_K^{G/N}$ on $\bl^p(K\times G/N)$, given by
\[
W_K^{G/N}\eta(s,tN)=\eta(s,stN)
\]
and then replace $\Gam_K$ with $\Gam_K^{G/N}:\fB(\bl^p(K))\to\fB(\bl^p(K\times G/N))$, given by
\[
\Gam_K^{G/N}(T)=W_K^{G/N}(T\otimes I)(W_K^{G/N})^{-1}.
\]
The rest of the proof holds verbatim.
\end{proof}

We now get an analogue of \cite[Thm.\ 3.6]{spronk}. 
The neccessity to consider only weakly completely
bounded maps will arise in various aspects of the proof below.

\begin{theorem}\label{theo:averaging}
We have that $\tau_N(\sop(G))\subset\sop(H)$ and $\tau_N:\sopg\to\sop(H)$
is a weakly complete surjection, hence a completely bounded map.
\end{theorem}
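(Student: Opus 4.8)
The plan is to mimic the structure of the tensor product and restriction theorems, running everything through the special ideals $\fM_p(K)$ and the infinite-matrix criterion of Corollary \ref{cor:wcsurj}. First I would establish that $\tau_N(\sop(G))\subset\sop(G/N)$: following the first part of the proof of \cite[Thm.\ 3.6]{spronk}, one checks that $\tau_N$ intertwines left translations — more precisely $\tau_N(t\ast u)=(tN)\ast\tau_N(u)$ — so that if $u=\sum_{t\in G}t\ast v_t$ with $\sum_t\norm{v_t}_{\ap}<\infty$ comes from the defining quotient $Q_K$ on $\ell^1(G)\hat\otimes^p\fM_p(K)$, then $\tau_N u=\sum_{t\in G}(tN)\ast\tau_N(v_t)$; and one needs $\tau_N$ to carry $\fM_p(K)$ boundedly (indeed, via Lemma \ref{lem:mpkapgnmod}, completely boundedly as an $\ap(G/N)$-module morphism or via the restriction-type argument) into $\fM_p(KN/N)$ or a suitable compactly supported ideal of $\ap(G/N)$. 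This gives a bounded map $\sop(G)\to\sop(G/N)$ at the Banach-space level, and the compact support of $\fM_p(K)$ is preserved because $\supp\tau_N u\subseteq\supp u\cdot N$.

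Next, the heart of the argument: I would produce a commuting diagram exactly parallel to the one in the proof of Theorem \ref{theo:restriction}, with $R_H$ replaced by $\tau_N$ and the subgroup $H$ replaced by the quotient group $G/N$. That is, with $\fI=\fM_p(K)$ on the $G$-side and $\fJ$ a corresponding compactly supported $p$-operator Segal ideal on the $G/N$-side (the image of $\fM_p(K)$ under the averaging, or equivalently $\fM_p(\bar K)$ for $\bar K$ the image of $K$), one gets
\[\xymatrix{
\nmat_\infty(\ell^1(G)\pptens\fM_p(K)) \ar[rr]\ar[d] & & \nmat_\infty(\ell^1(G/N)\pptens\fJ)\ar[d] \\
\nmat_\infty(\sopg) \ar[rr]^{\nmat_\infty(\tau_N)} & & \nmat_\infty(\sop(G/N))
}\]
where the vertical arrows are $\nmat_\infty$ of the defining complete quotient maps $Q_K$, $Q_\fJ$ and the top horizontal arrow is $\nmat_\infty$ of $\ell^1$-averaging tensored with the ideal-level map $\fM_p(K)\to\fJ$. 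The top row is surjective because $\ell^1(G)\to\ell^1(G/N)$ is a quotient homomorphism and the ideal-level map is a weakly complete quotient map (this is where Lemma \ref{lem:mpkapgnmod} and an adaptation of Theorem \ref{theo:mpkideal} enter, precisely as Lemma \ref{lem:idealresttoH} served in the restriction case); the right column is surjective as $Q_\fJ$ is a complete quotient map. Chasing the diagram, the bottom row is surjective, and Corollary \ref{cor:wcsurj}(ii) then yields that $\tau_N:\sopg\to\sop(G/N)$ is a weakly complete surjection. Complete boundedness then follows from the usual factorisation argument modelled on (\ref{eq:Sfactorise}), since the range carries the dual $p$-operator space structure.

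The step I expect to be the main obstacle is establishing that the ideal-level averaging map $\fM_p(K)\to\fJ$ is actually a weakly complete quotient map onto a suitable compactly supported $p$-operator Segal ideal of $\ap(G/N)$ — i.e.\ the analogue, for averaging, of Lemma \ref{lem:idealresttoH}. For restriction this relied on Derighetti's complete-isometry embedding $\cvp(H)\hookrightarrow\cvpg$ dualising to a complete quotient $\apg\to\ap(H)$; for the quotient $G/N$ one instead needs that the inflation map $\pmp(G/N)\hookrightarrow\pmpg$ (equivalently $\cvp(G/N)\hookrightarrow\cvpg$), realised via the lifting $\ell^p(G/N)\hookrightarrow\ell^p(G)$-type construction, is a complete isometry, and that conjugating by the appropriate $W_K^{G/N}$ from Lemma \ref{lem:mpkapgnmod} keeps us inside $\fV_p(K)\bar\otimes\pmp(G/N)$. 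Once that dual-side complete isometry is in hand — and I believe it follows from a direct computation together with Proposition \ref{prop:bwtensb}, in the spirit of Lemma \ref{lem:vpspaces}(iii) — the rest of the argument is the routine diagram chase described above, and the appeal to weakly completely bounded maps (rather than completely bounded ones) is forced at exactly the same places as in Theorems \ref{theo:tensprod} and \ref{theo:restriction}: we cannot upgrade weakly complete quotient maps to complete quotient maps in the absence of a Wittstock extension theorem.
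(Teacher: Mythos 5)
Your overall skeleton (run everything through $\fM_p(K)$, get an ideal-level averaging map onto a compactly supported Segal ideal of $\ap(G/N)$, then a two-by-two diagram chase plus Corollary \ref{cor:wcsurj} to get the weakly complete surjection, with complete boundedness following from the factorisation (\ref{eq:Sfactorise})) agrees with the paper's strategy, and you correctly identify where the real difficulty sits. But that difficulty --- the ideal-level statement --- is exactly where your proposal has a genuine gap, and the route you suggest for closing it is not the one the paper uses and is itself unestablished. You propose to obtain the ideal-level weakly complete quotient from a complete isometry $\cvp(G/N)\hookrightarrow\cvpg$ (an ``inflation'' analogue of Derighetti's subgroup embedding, Theorem \ref{theo:cvpinjection}). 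No such result is proved in the paper, and there is no evident ``lifting $\ell^p(G/N)\hookrightarrow\ell^p(G)$'' when $N$ is noncompact; nothing in Lemma \ref{lem:vpspaces} or Proposition \ref{prop:bwtensb} produces it. Moreover you identify the target ideal as $\fM_p(\bar K)$ for $\bar K$ the image of $K$, which is not justified (and not attempted in the paper); the paper simply takes $\fJ=\tau_N(\fM_p(K))$ and \emph{defines} its $p$-operator space structure to be the quotient structure via $\tau_N$, so that the ideal-level map is a complete quotient by construction, and then verifies the Segal-ideal axioms using Lemma \ref{lem:mpkapgnmod} and a diagram chase.

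What then remains, and what your sketch does not supply, is the genuinely new analytic input: that $\tau_N$ maps $\fM_p(K)$ into $\ap(G/N)$ at all, weakly completely boundedly. This is not a formal consequence of the module Lemma \ref{lem:mpkapgnmod} nor of any ``restriction-type'' argument (Derighetti's machinery concerns subgroups, not quotients). The paper handles it by Lohou\'{e}'s modular-function identity, which forces one to replace $\tau_N$ on the $\bl^p$-leg by the corrected map $\theta_N(f)=\Del_{G/N}\tau_N(\check{\Del}_G f)$ (so that $\tau_N(\xi\ast\check\eta)=\tau_N(\xi)\ast[\theta_N(\eta)]^\vee$), and then by the weakly completely isometric factorisation $\nmat^p(G)\cong\bl^{p'}(G)_r\pptens\bl^p(G)_c$ of An--Lee--Ruan, which shows $\tau_N\otimes\theta_N:\nmat^p(K)\to\nmat^p(G/N)$ is weakly completely bounded and hence, via the $P_K$/$P_{G/N}$ diagram, that $\tau_N:\fM_p(K)\to\ap(G/N)$ is a weakly complete contraction. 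Your final diagram chase would then go through essentially as you describe (the paper runs it with $\bl^1(G)\pptens\fM_p(K)$ and $Q'_{\fM_p(K)}$ from Theorem \ref{theo:seginLone}(ii) rather than with $\ell^1(G)$ and $\nmat_\infty$, but that is a cosmetic difference); without the row/column-space step and the $\theta_N$ correction, however, the proof is not complete.
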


\begin{proof}
The proof is essentially that of \cite[Thm.\ 3.6]{spronk}.  Unfortunately,
in order to highlight the aspects which require modification, we are forced to 
revisit nearly every aspect of that proof.  We do, however, take liberty
to omit some computational details which are simple modifictions of those in the aforementioned 
proof.

First, we fix a non-null compact set $K$ and
show that $\tau_N:\fM_p(K)\to\ap(G/N)$ is completely bounded.  

We show that $\tau_N(\fM_p(K))\subset\ap(G/N)$. To see this we 
have that
\[
\norm{\tau_N}_{\fB(\bl^{p'}(K),\bl^{p'}(G/N)}
\leq\inf\left\{\sup_{s\in G}\tau_N(|\vphi|^p)(sN)^{1/p}:\vphi\in\fC_c(G),\vphi|_K=1\right\}<\infty.
\]
Of course, this estimate makes sense with roles of $p$ and $p'$ interchanged.
Motivated by the computation of \cite[p.\ 187]{lohoue} which shows that for compactly supported 
integrable $\eta$ we have
$\tau_N(\check{\eta})=\left[\Del_{G/N}\tau_N(\check{\Del}_G\eta)\right]^\vee$, we define
$\theta_N:\bl^p(K)\to\bl^p(G/N)$ by $\theta_N(f)=\Del_{G/N}\tau_N(\check{\Del}_Gf)$.
Then indeed $\theta_N$ admits the claimed codomain with
\[
\norm{\tau_N}_{\fB(\bl^p(K),\bl^p(G/N)}
\leq\sup_{s\in K}\Del_{G/N}(sN)\sup_{t\in K}\Del_G(t^{-1})\norm{\tau_N}_{\fB(\bl^p(K),\bl^p(G/N)}
<\infty.
\]
Hence if $\xi\in\bl^{p'}(K)$ and $\eta\in\bl^p(K)$ we have that
\[
\tau_N(\xi\ast\check{\eta})=\tau_N(\xi)\ast\tau_N(\check{\eta})
=\tau_N(\xi)\ast\left[\theta_N(\eta)\right]^\vee\in\ap(G/N)
\]
and it follows that $\tau_N(\fM_p(K))\subset\ap(G/N)$.  Moreover, the support of 
$\tau_N(\fM_p(K))$ is contained in the image of $K^{-1}K$ in $G/N$ and is thus compact.

We now want to see that $\tau_N:\fM_p(K)\to\ap(G/N)$ is indeed completely bounded
We first observe that with the column space structure $\bl^p(G)_c$, and row space structure
$\bl^{p'}(G)_r$ of \cite{lemerdy}, we have a weakly completely isometric identification
\[
\nmat^p(G)=\bl^{p'}(G)_r\pptens\bl^p(G)_c
\]
thanks to \cite[Prop.\ 2.4 \& Cor.\ 2.5]{anleeruan}.  Moreover, \cite[Prop.\ 2.4]{anleeruan}
shows that $\tau_N:\bl^{p'}(K)_r\to\bl^{p'}(G/N)_r$ and
$\theta_N:\bl^p(K)_c\to\bl^p(G/N)_c$ are completely bounded, and hence
\[
\tau_N\otimes\theta_N:\bl^{p'}(K)_r\pptens\bl^p(K)_c
\to\bl^{p'}(G/N)_r\pptens\bl^p(G/N)_c
\]
is completely bounded, and thus forms a weakly completely bounded map
$\tau_N\otimes\theta_N:\nmat^p(K)\to\nmat^p(G/N)$.  Hence we consider the 
following commuting diagram.
\[\xymatrix{
\nmat^p(K)\ar[rr]^{\tau_N\otimes\theta_N}\ar[d]_{P_K} & & \nmat^p(G/N) \ar[d]^{P_{G/N}} \\
\fM_p(K) \ar[rr]_{\tau_N} &  & \ap(G/N)
}\]
Since the top arrow is a weakly complete isometry, and the down arrows are both
(weakly) complete quotient maps, we see that the bottom arrow must be a weakly
complete contraction.

We now place on $\tau_N(\fM_p(K))$ the operator space structure which makes
$\tau_N:\fM_p(K)\to\tau_N(\fM_p(K))$ a complete quotient map, hence a weakly complete
quotient map.  We wish to see that, in this capacity, $\tau_N(\fM_p(K))$ is a
weakly contractive $p$-operator Segal ideal in $\ap(G/N)$.

First, let $\pi_N:G\to G/N$ be the quotient map.  We observe that for
$u$ in $\ap(G/N)$ and $v$ in $\fM_p(K)$ we have
\[
\tau_N(v)(sN)u(sN)=\int_N v(sn)u\circ\pi_N(sn)\, dn
=\tau_N(v\,u\circ \pi_N)(sN)
\]
so $\tau_N(v)u=\tau_N(v\, u\circ \pi_N)\in\tau_N(\fM_p(K))$, as $u\circ \pi_N\; v\in\fM_p(K)$.
Now consider the following commuting diagram where $m$ is the 
completely contractive multiplication map promised by Lemma \ref{lem:mpkapgnmod}
and $\til{m}$ is the multiplication map promised above.
\[\xymatrix{
\fM_p(K)\otimes_{\wedge p}\ap(G/N) \ar[rr]^<<<<<<<<<<<<m 
\ar[d]_{\tau_N\otimes\id} & & \fM_p(K) \ar[d]^{\tau_N} \\
\tau_N(\fM_p(K))\otimes_{\wedge p}\ap(G/N) \ar[rr]_<<<<<<<<<<{\til{m}} & & \tau_N(\fM_p(K))
}\]
Since the top arrow is a complete contraction, and the down arrows are weak complete
quotient maps, the bottom arrow must be a complete contraction as well, hence
extends to $\tau_N(\fM_p(K))\pptens\ap(G/N)$.

From Theorem \ref{theo:seginLone} (ii) we have that the map
$Q'_{\fM_p(K)}:\bl^1(G)\pptens\fM_p(K)\to\sopg$ is a weakly complete
surjection.  Similarly, appealing also to the fact that $\tau_N(\fM_p(K))$
is a compactly supported weakly $p$-operator Segal ideal in $\ap(G/N)$, we have that
$Q'_{\tau_N(\fM_p(K))}:\bl^1(G/N)\pptens\tau_N(\fM_p(K))\to\sop(G/N)$
is a weakly complete surjection.  

The following diagram commutes, where the down arrows are weakly complete surjections
by virtue of Theorem \ref{theo:seginLone} (ii), and, additionally, the fact that $\tau_N(\fM_p(K))$
is a compactly supported weakly $p$-operator Segal ideal in $\ap(G/N)$.
\[\xymatrix{
\bl^1(G)\pptens\fM_p(K) \ar[rr]^{\tau_N\otimes\tau_N}\ar[d]_{Q'_{\fM_p(K)} }
& & \bl^1(G/N)\pptens\tau_N(\fM_p(K)) \ar[d]^{Q'_{\tau_N(\fM_p(K))}} \\
\sopg \ar[rr]_{\tau_N} & & \sop(G/N)
}\]
Since the top arrow is a complete quotient map, 
hence a weakly complete surjection, and the down arrows are weakly
complete surjections, the same must hold of the bottom arrow.
\end{proof}

\section{Discussion}

\subsection{On containment relations}
Let $1<q< p\leq 2$ or $2\leq p<q<\infty$.  If $G$ is amenable, then
$\apg\subset\mathrm{A}_q(G)$ contractively.  See \cite[Thm.\ C]{herz2}
and \cite[Remark, p.\ 392]{runde1}.  Thus we have $\mathrm{A}_p^K(G)\subset
\mathrm{A}_q^K(G)$ contractively (these ideals are defined in Section \ref{ssec:construction}),
and hence it follows that $\sopg\subset
\mathrm{S}_0^q(G)$, boundedly.

Since $\bl^p(G)$ is, isomorphically, a quotient of a subspace of a $L^q$-space,
$\pmp(G)$ can be endowed with a $q$-operator space structure, and thus so can
$\apg$.  Hence, {\em is the inclusion $\apg\subset\mathrm{A}_q(G)$ completely bounded?}

If the answer to the above question is true, even in the weakly complete sense, then
the adjoint gives a completely bounded map $\mathrm{PM}_q(G)\to\pmpg$.
Thus, in the notation of the proof of Theorem \ref{theo:main} we should be able to prove that
there is a completely bounded map $\ell^\infty(G,\fV_q(G))\to
\ell^\infty(G,\fV_p(G))$, which, when restricted to $(\ker Q_K)^\perp$, is the adjoint
of the inclusion $\sopg\hookrightarrow\mathrm{S}_0^q(G)$.  Hence we would see that
the latter inclusion is weakly completely bounded.

\subsection{Fourier transform}
Let $G$ be abelian with dual group $\hat{G}$.  In \cite{feichtinger} it is
shown that $\mathrm{S}_0^2(G)\cong\mathrm{S}_0^2(\hat{G})$, via the Fourier
transform $F$.  Suppose $p\not=2$.
{\em Is there a meaningful intrinsic characterisation of $F(\sopg)$ as a subspace of
$\mathrm{A}_2(\hat{G})$?}

%\vfill

%\pagebreak

Addresses:
\linebreak
 {\sc 
 Istanbul University, Faculty of Science, Department of Mathematics, 34134 Vezneciler, Istanbul, Turkey. \\
Department of Pure Mathematics, University of Waterloo,
Waterloo, ON\quad N2L 3G1, Canada.}

\medskip
Email-adresses:
\linebreak
{\tt oztops@istanbul.edu.tr}
\linebreak {\tt nspronk@uwaterloo.ca}

\end{document}